\DeclarePairedDelimiter{\ceil}{\lceil}{\rceil}
\DeclarePairedDelimiter{\floor}{\lfloor}{\rfloor}
\patchcmd{\ps@pprintTitle}{\footnotesize\itshape
       Preprint submitted to \ifx\@journal\@empty Elsevier
       \else\@journal\fi\hfill\today}{\relax}{}{}
\newtheorem{theorem}{Theorem}[section]
\newtheorem{lemma}[theorem]{Lemma}
\newtheorem{prop}[theorem]{Proposition}
\newtheorem*{con7*}{Conjecture 7*}
\theoremstyle{definition}
\newtheorem{definition}[theorem]{Definition}
\newtheorem*{th315}{Theorem 3.15}
\newtheorem*{th46}{Theorem 4.6}
\newtheorem*{th49}{Theorem 4.9}
\newtheorem*{th410}{Theorem 4.10}
\newtheorem*{th63}{Theorem 6.3}
\newtheorem*{th56}{Theorem 5.6}
\newtheorem*{th53}{Theorem 5.3}
\theoremstyle{remark}
\numberwithin{equation}{theorem}
\DeclareMathOperator{\M}{M(G)}
\DeclareMathOperator{\m}{M}
\DeclareMathOperator{\e}{exp}
\journal{}
\begin{document}

\begin{frontmatter}

\title{ On the Exponent of the Schur multiplier.}

 \author[IISER TVM]{A. Antony}
\ead{ammu13@iisertvm.ac.in}
\author[IISER TVM]{K. Patali}
\ead{patalik16@iisertvm.ac.in}
\author[IISER TVM]{V.Z. Thomas\corref{cor1}}
\address[IISER TVM]{School of Mathematics,  Indian Institute of Science Education and Research Thiruvananthapuram,\\695551
Kerala, India.}
\ead{vthomas@iisertvm.ac.in}
\cortext[cor1]{Corresponding author. \emph{Phone number}: +91 8078020899.}

\begin{abstract}
A longstanding problem attributed to I. Schur says that for a finite group $G$, the exponent of the second homology group $H_2(G, \mathbb{Z})$ divides the exponent of $G$. In this paper, we prove this conjecture for finite nilpotent groups of odd exponent and of nilpotency class 5, $p$-central metabelian $p$ groups, and groups considered by L. E . Wilson in \cite{LEW}. Moreover, we improve several bounds given by various authors.

\end{abstract}

\begin{keyword}
 Schur Multiplier \sep regular $p$-groups \sep powerful $p$-groups \sep potent $p$ groups  \sep covering group \sep group actions.
\MSC[2010]   20B05 \sep 20D10 \sep 20D15 \sep 20F05 \sep 20F14 \sep 20F18 \sep 20G10 \sep 20J05 \sep 20J06 
\end{keyword}

\end{frontmatter}

\section{Introduction}

The Schur multiplier of a group $G$, denoted by $\M$ is the second homology group of $G$ with coefficients in $\mathbb{Z}$, i.e $\M=H_2(G, \mathbb{Z})$. A longstanding conjecture attributed to I. Schur says that 
\begin{equation}\label{E1}
\tag{1}  \e(\M)|\e(G). 
\end{equation}

To prove ($\ref{E1}$), it is enough to restrict ourselves to $p$ groups using a standard argument given in Theorem 4, Chapter IX of \cite{JPS}. A. Lubotzky and A. Mann showed that (\ref{E1}) holds for powerful $p$ groups(\cite{L.M}), M. R. Jones in \cite{MRJ} proved that (\ref{E1}) holds for groups of class 2,  P. Moravec showed that (\ref{E1}) holds for groups of nilpotency class at most 3, odd order class 4, potent $p$ groups, metabelian $p$ groups of exponent $p$, $p$ groups of class at most $p-2$ (\cite{P.M.1}, \cite{P.M.2}, \cite{P.M.3}) and some other classes of groups. The general validity of $(\ref{E1})$ was disproved by A. J. Bayes, J. Kautsky and J. W. Wamsley in \cite{BKW}. Their counterexample involved a 2 group of order $2^{68}$ with $\e(G)=4$ and $\e(\M)=8$. Nevertheless for finite groups of odd exponent, this problem of Schur remains open till date. This problem has remained open even for finite $p$ groups of class 5 having odd exponent. The purpose of this paper is to prove (\ref{E1}) for finite $p$ groups of class 5 with odd exponent and to also prove the above mentioned results of \cite{L.M}, \cite{P.M.1}, \cite{P.M.2} and \cite{P.M.3} for odd primes and hence proving all these results using a common technique and bringing them under one umbrella. We briefly describe the organization of the paper by listing the main results according to their sections.

In \cite{P.M.5}, the author proves that if $G$ is a group of nilpotency class 5, then $\e(\M)\mid (\e(G))^2$. In the next Theorem, we improve this bound and in fact show the validity of (\ref{E1}). Entire section 3 is devoted to proving this theorem.

\begin{th315}
Let $G$ be a finite $p$-group of nilpotency class 5. If $p$ is odd, then $\e(\M)\mid \e(G)$.
\end{th315}

In Section 4, we prove three main theorems which we list below. In \cite{P.M.3}, P. Moravec shows that $(\ref{E1})$ holds for $p$ groups of class less than $p-1$. Authors of \cite{H.M.M} prove the same for class less than or equal to $p-1$. In the next Theorem, we generalize both the above results by proving:

\begin{th46}
Let $p$ be an odd prime and $G$ be a finite $p$-group. If the nilpotency class of $G$ is at most $p-1$, then $\e(G\wedge G)\mid \e(G)$. In particular, $\e(\M)|\e(G)$.
\end{th46}

The second condition in the next Theorem generalizes the definition of powerful $2$-groups for odd primes and were considered by L. E. Wilson in \cite{LEW}, and the first condition includes the class of groups considered by Arganbright in \cite{DEA}, and it also includes the class of potent $p$-groups considered by J. Gonzalez-Sanchez and  A. Jaikin-Zapirain in \cite{SZ}. Thus as a corollary of the next Theorem, we obtain the well-known result that (\ref{E1}) holds for powerful $p$ groups (\cite{L.M}) and potent $p$ groups (\cite{P.M.4}). 

\begin{th49}
Let $p$ be an odd prime and $G$ be a finite $p$-group satisfying either of the conditions below:
\begin{itemize}
\item[(i)] $\gamma_m(G)\subset G^{p}$ for some  $m$ with $2\leq m\leq p-1$.
\item[(ii)] $\gamma_p(G)\subset G^{p^2}$.
\end{itemize}
 Then $\e(G\wedge G)\mid \e(G)$, hence $\e(\M)\mid \e(G)$.
\end{th49}

As a corollary to the next Theorem, we show that proving Schurs conjecture for regular groups is equivalent to proving it for groups $G$ with $\e(G)=p$.

 \begin{th410}
The following statements are equivalent:
\begin{itemize}
\item[$(i)$] $\e(G\wedge G)\mid \e(G)$ for all regular $p$-groups $G$.
\item[$(ii)$] $\e(G\wedge G)\mid \e(G)$ for all groups $G$ of exponent $p$.
\end{itemize}
 \end{th410}

In section 5, we give bounds on the exponent of $\M$ that depend on nilpotency class. Ellis in \cite{G.E} showed that if $G$ is a group with nilpotency class $c$, then $\e(\M)\mid ( \e(G))^{\ceil{\frac{c}{2}}}$. P. Moravec in \cite{P.M.1} improved this bound by showing that if $d$ is the derived length of $G$, then $\e(\M)\mid ( \e(G))^{2(d-1)}$. In the next Theorem, we improve both the above bounds given in \cite{G.E} and \cite{P.M.1}.

\begin{th53}
Let $G$ be a group with nilpotency class $c>2$ and let  $n= \ceil{\log_{2}(\frac{c+1}{3})}$. If $\e(G)$ is odd, then $\e(G\wedge G) \mid (\e(G))^n$. In particular, $\e(\M)\mid (\e(G))^n$.

\end{th53}

In Theorem 1.1 of \cite{S2}, N. Sambonet improved all the bounds obtained by various authors by proving that $\e(\M)\mid (\e(G))^m$, where $m=\floor{\log_{p-1} c}+1$. We improve the bound in given in \cite{S2} by proving, 

\begin{th56}
Let $p$ be an odd prime and G be a finite $p$-group of class $c$ and let $n=\ceil{\log_{p-1}c}$. If $c\neq 1$, then $\e{(G \wedge G)} \mid \e(G)^n$. In particular, $\e(\M)\mid \e(G)^n$.
\end{th56}

For a solvable group of derived length $d$, the author of \cite{S1} proves that $\e(\M)\mid (\e(G))^d$, when $\e(G)$ is odd, and $\e(\M)\mid 2^{d-1}(\e(G))^d$, when $\e(G)$ is even. Using our techniques, we obtain the following generalization of Theorem A of \cite{S1}, which is one of their main results. 

\begin{th63}
Let $G$ be a solvable group of derived length $d$. 
\begin{itemize}
\item[$(i)$] If $\e(G)$ is odd, then $\e(G\otimes G)\mid (\e(G))^d$. In particular, $\e(\M)\mid (\e(G))^d$.
\item[$(ii)$] If $\e(G)$ is even, then $\e(G\otimes G)\mid 2^{d-1}(\e(G))^d$. In particular, $\e(\M)\mid 2^{d-1}(\e(G))^d$.
\end{itemize}
\end{th63}

\section{Preparatory Results}

R. Brown and J.-L. Loday introduced the nonabelian tensor product $G\otimes H$ for a pair of groups $G$ and $H$ in \cite{BL1} and \cite{BL2} in the context of an application in homotopy theory, extending the ideas of J.H.C. Whitehead in \cite{W}. A special case, the nonabelian tensor square, already appeared in the work of R.K. Dennis in \cite{RKD}. The non-abelian tensor product of groups is defined for a pair of groups that act on each other provided the actions satisfy the compatibility conditions of Definition \ref{D:1.0} below. Note that we write conjugation on the left, so $^gg'=gg'g^{-1}$ for $g,g'\in G$ and
$^gg'\cdot g'^{-1}=[g,g']$ for the commutator of $g$ and $g'$.

\begin{definition}\label{D:1.0}
Let $G$ and $H$ be groups that act on themselves by conjugation and each of which acts on the other. The mutual actions are said to be compatible if
\begin{equation}
^{^h g}h'=\; ^{hgh^{-1}}h' \;and\; ^{^g h}g'=\ ^{ghg^{-1}}g' \;\mbox{for \;all}\; g,g'\in G, h,h'\in H.
\end{equation}
\end{definition}

\begin{definition}\label{D:1.2}
Let $G$ be a group that acts on itself by conjugation, then the nonabelian tensor square $G\otimes G$ is the group generated by the symbols $g\otimes h$ for $g,h\in G$  with relations
\begin{equation}\label{eq:1.1.1}
gg'\otimes h=(^gg'\otimes \;^gh)(g\otimes h),  
\end{equation}
\begin{equation}\label{eq:1.1.2}
g\otimes hh'=(g\otimes h)(^hg\otimes \;^hh'),   
\end{equation}
\noindent for all $g,g',h,h'\in G $.
\end{definition}

There exists a homomorphism $\kappa : G\otimes G \rightarrow G^{\prime}$ sending $g\otimes h$ to $[g,h]$. Let $\nabla (G)$ denote the subgroup of $G\otimes G$ generated by the elements $x\otimes x$ for $x\in G$. The exterior square of $G$ is defined as $G\wedge G= (G\otimes G)/\nabla (G)$. We get an induced homomorphism, which we also denote as $\kappa$, where $\kappa : G\wedge G \rightarrow G^{\prime}$.  We set $M(G)$ as the kernel of this induced homomorphism, which is also known as the Schur multiplier of $G$. It has been shown in \cite{M} that $M(G)\cong H_{2}(G, \mathbb{Z})$, the second homology group of $G$.

We can find the following results  in \cite{BJR} and Proposition 3 of \cite{V}.

\begin{prop}\label{P:2}
\begin{itemize}
\item[(i)] There are homomorphisms of groups $\lambda : G \otimes H \rightarrow G,\ \lambda': G \otimes H \rightarrow H$ such that $\lambda(g \otimes h) = g^hg^{-1},\ \lambda'(g \otimes h)=\; ^ghh^{-1}$.
\item[(ii)] The crossed module rules hold for $\lambda$ and $\lambda'$, that is,
\begin{align*}
\lambda(^gt) &=\ g(\lambda(t))g^{-1},\\
tt_1t^{-1} &=\ ^{\lambda(t)}t_1,
\end{align*}
for all $t,t_1 \in G \otimes H, g \in G$ (and similarly for $\lambda'$).
\item[(iii)] $\lambda(t) \otimes h =\ t^ht^{-1},\ g \otimes \lambda'(t) =\  ^gtt^{-1}$, and thus $\lambda(t) \otimes \lambda'(t_1) = [t,t_1]$ for all $t,t_1 \in G \otimes H, g \in G, h \in H$. Hence $G$ acts trivially on $Ker \lambda'$ and $H$ acts trivially on $Ker \lambda$.
\end{itemize}
In particular, the following relations hold for $g,g_1 \in G$ and $h,h_1 \in H$ :
\begin{itemize}
\item[(iv)]\begin{equation}\label{eq:1.2.1}
^g(g^{-1} \otimes h) = (g \otimes h)^{-1} =\ ^h(g \otimes h^{-1}). 
\end{equation} 

\item[(v)]\begin{equation}\label{eq:1.2.2}
(g \otimes h)(g_1 \otimes h_1)(g \otimes h)^{-1} = (^{[g,h]}g_1 \otimes \ ^{[g,h]}h_1). 
\end{equation}

\item[(vi)]\begin{equation}\label{eq:1.2.3}
[g,h] \otimes h_1 = (g \otimes h)^{h_1} (g\otimes h)^{-1}.
\end{equation}

\item[(vii)]\begin{equation}\label{eq:1.2.4}
 g_1 \otimes\ [g,h] =\ ^{g_1}(g \otimes h) (g\otimes h)^{-1}. 
\end{equation}

\item[(viii)]\begin{equation}\label{eq:1.2.5}
 [g\otimes h, g_1 \otimes h_1] =  [g,h] \otimes [g_1,h_1].
\end{equation}
Moreover, 
\begin{equation}\label{eq:1.2.6}
(g_1 \otimes h_1)(g_2 \otimes h_2) = ([g_1,h_1] \otimes [g_2,h_2])(g_2 \otimes h_2)(g_1 \otimes h_1). 
\end{equation}

\end{itemize}
\end{prop}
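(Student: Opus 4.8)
The plan is to regard (i) and (ii) as the structural heart of the proposition and to obtain (iii)--(viii) as purely formal consequences. All of these are classical properties of the nonabelian tensor product, and the fastest route is simply to quote \cite{BJR} and Proposition 3 of \cite{V}; nevertheless I would want to reprove them directly from the defining relations \eqref{eq:1.1.1} and \eqref{eq:1.1.2}, because every subsequent commutator-in-the-tensor-square computation in the paper depends on the exact shape of identities \eqref{eq:1.2.1}--\eqref{eq:1.2.6}. Throughout I would use only that conjugation is an automorphism of $G$, that the mutual actions satisfy the compatibility conditions of Definition \ref{D:1.0}, and the two generating relations.

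For (i), I would define $\lambda$ and $\lambda'$ on the generating symbols $g\otimes h$ by the stated formulas and check that these maps are compatible with both defining relations. Applying $\lambda$ to the right-hand side of \eqref{eq:1.1.1} and imposing multiplicativity, one expands the actions $^gg'$ and $^gh$ and simplifies using the compatibility conditions until the expression collapses to the value prescribed for $\lambda(gg'\otimes h)$; relation \eqref{eq:1.1.2} is treated symmetrically, and interchanging the roles of the two factors handles $\lambda'$. This is the one genuinely mechanical verification, and it is exactly what makes $\lambda,\lambda'$ well defined. For (ii), the equivariance $\lambda(^gt)=g\,\lambda(t)\,g^{-1}$ is immediate on generators and extends to all $t$ since both sides are multiplicative in $t$. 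The Peiffer-type rule $tt_1t^{-1}=\;^{\lambda(t)}t_1$ is the main point: I would first prove the generator-level conjugation formula \eqref{eq:1.2.2} directly from \eqref{eq:1.1.1}--\eqref{eq:1.1.2}; since on the tensor square $\lambda(g\otimes h)=\lambda'(g\otimes h)=[g,h]$, this says precisely that $(g\otimes h)t_1(g\otimes h)^{-1}=\;^{\lambda(g\otimes h)}t_1$ for $t_1$ a generator, hence for all $t_1$ as both sides are homomorphisms in $t_1$. One then passes to arbitrary $t$ by multiplicativity: if the rule holds for $s$ and for $t$, then $(ts)t_1(ts)^{-1}=\;^{\lambda(t)}(^{\lambda(s)}t_1)=\;^{\lambda(t)\lambda(s)}t_1=\;^{\lambda(ts)}t_1$, using that $\lambda$ is a homomorphism and that the actions genuinely compose.

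With (i) and (ii) available, part (iii) is formal: $\lambda(t)\otimes h=t^{h}t^{-1}$ follows by combining the crossed-module rule with the definition of $\lambda$, the companion identity $g\otimes\lambda'(t)=\;^gtt^{-1}$ is symmetric, and composing the two yields $\lambda(t)\otimes\lambda'(t_1)=[t,t_1]$, from which the asserted triviality of the actions on $\ker\lambda'$ and $\ker\lambda$ drops out. The remaining relations are then specializations. Identity \eqref{eq:1.2.1} comes from setting $g'=g^{-1}$ in \eqref{eq:1.1.1} and $h'=h^{-1}$ in \eqref{eq:1.1.2}, together with $1\otimes h=g\otimes 1=1$. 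Relation \eqref{eq:1.2.2} is the generator-level conjugation rule already established; \eqref{eq:1.2.3} and \eqref{eq:1.2.4} are the two halves of (iii) read with $t=g\otimes h$ and $\lambda(g\otimes h)=\lambda'(g\otimes h)=[g,h]$; \eqref{eq:1.2.5} is $\lambda(t)\otimes\lambda'(t_1)=[t,t_1]$ specialized to generators; and \eqref{eq:1.2.6} is just the rearrangement $ab=[a,b]\,ba$ applied with $a=g_1\otimes h_1$, $b=g_2\otimes h_2$ and \eqref{eq:1.2.5}.

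The step I expect to be the real obstacle is the well-definedness in (i): because the symbols $g\otimes h$ generate $G\otimes H$ but do not generate it freely, one cannot simply declare $\lambda$ on generators, and the verification that the prescribed values respect \emph{both} families of relations (after fully unwinding the compatible actions) is where all the care is needed. Once $\lambda$ and $\lambda'$ are known to be honest homomorphisms, the conjugation formula \eqref{eq:1.2.2}, the Peiffer rule, and hence parts (iii)--(viii) follow with no further ideas, each being either a direct relation manipulation or a specialization of the crossed-module identities.
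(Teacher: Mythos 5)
Your proposal is correct in outline, but the comparison with the paper is lopsided: the paper gives no proof of Proposition \ref{P:2} at all. It simply quotes the statement from \cite{BJR} and Proposition 3 of \cite{V} (results going back to Brown--Loday), so the paper's ``proof'' is a citation. What you propose is essentially a reconstruction of the argument in those references: define $\lambda,\lambda'$ on generators and check both defining relations using the compatibility conditions, establish the crossed-module/Peiffer rules, and then obtain (iii)--(viii) by specialization and the rearrangement $ab=[a,b]ba$. That chain of reductions is sound, and you correctly identify well-definedness of $\lambda,\lambda'$ as the genuine content. Two caveats are worth flagging. First, before (ii) or \eqref{eq:1.2.2} can even be stated, one needs that the diagonal prescriptions ${}^{x}(g\otimes h)=({}^{x}g\otimes{}^{x}h)$, for $x\in G$ or $x\in H$, themselves preserve the relations \eqref{eq:1.1.1}--\eqref{eq:1.1.2}; this is a compatibility computation of the same kind as the well-definedness of $\lambda$, which you use implicitly but never list as a step. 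Second, your claim that (iii) is ``formal'' given (i) and (ii) is slightly optimistic: the Peiffer rule together with the derivation property $gg'\otimes h={}^{g}(g'\otimes h)\,(g\otimes h)$ does reduce $\lambda(t)\otimes h=t\,{}^{h}t^{-1}$ to the case of a generator $t=g_1\otimes h_1$ by multiplicativity, but that generator case is yet another relation-level computation of the same nature as \eqref{eq:1.2.2}, not a purely formal consequence of the crossed-module axioms. With those two verifications added, your plan coincides with the standard published proof; what it buys over the paper's bare citation is a self-contained treatment, which is defensible here since every commutator calculation in Sections 3--5 of the paper leans on the exact shape of \eqref{eq:1.2.1}--\eqref{eq:1.2.6}.
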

Before we state the next result, let us define what we mean by $weight$ of a non-identity element in a nilpotent group $G$. 

\begin{definition}
An element $g \in G\setminus\{1\}$ is said to have $weight$ $n$ if $g \in \gamma_n(G)$ and $g \notin \gamma_{n+1}(G)$. It is denoted by $w(g)$.
\end{definition}
Note that in this paper all the commutators are considered to be right normed and $[g,h] = ghg^{-1}h^{-1}$.
As in \cite{L.G}, we have
\begin{align}\label{eq:1.5.1}
(gh)^n \equiv &\ \prod_{r = n-1}^{1} [h _r, g]^{{n}\choose{r+1}} g^n h^n\ \mbox{mod\ M},
\end{align}
where $M$ is generated by commutators in $g$ and $h$ of weight atleast $2$ in $g$.

\section{Nilpotency Class 5}

In \cite{R} and \cite{EL}, the authors give an isomorphism between the nonabelian tensor square of $G$ and the subgroup $[G, G^{\phi}]$ of $\gamma(G)$.  We use this isomorphism in the following lemma.

\begin{lemma}\label{L:3}
Let $G$ be a group of nilpotency class $c \leq 5$. Then the following hold for $g, g_1, g_2 ,h, h_1, h_2\in G$.
\begin{itemize}

\item[(i)]$ (g \otimes h) = 1$, when $w(g) + w(h) \geq 7$.

\item[(ii)]$ [g_1\otimes h_1, g_2 \otimes h_2 ] = 1$,  when $w(g_1) +w(g_2)+w(h_1)+w(h_2) \geq 7$. In particular, $ ^{[g_1,h_1]}(g_2 \otimes h_2) = (g_2\otimes h_2)$.

\item[(iii)] $\gamma_3(G)$ acts trivially on $g \wedge h$, where $w(g) + w(h) \geq 4$.

\item[(iv)]  $\gamma_2(G)$ acts trivially on $g \wedge h$, where $w(g) + w(h) \geq 5$.

\item[(v)] $((g \wedge [g,h])(g \wedge h))^n = ([g,h] \wedge [g,[g,h]])^{{n}\choose{2}}(g \wedge [g,h])^n(g \wedge h)^n.$
\end{itemize}
\end{lemma}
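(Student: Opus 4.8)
The plan is to abbreviate $a = g \wedge [g,h]$ and $b = g \wedge h$, so that the left-hand side of (v) is exactly $(ab)^n$, and then to prove the identity by showing that $[a,b]$ is central in $\langle a,b\rangle$ and invoking the usual collection formula for a single central commutator. First I would identify $[b,a]$: passing relation (\ref{eq:1.2.5}) to the exterior square gives $[g \wedge h, g_1 \wedge h_1] = [g,h] \wedge [g_1,h_1]$, and applying this with first entry $b = g\wedge h$ and second entry $a = g \wedge [g,h]$ yields
\[
[b,a] = [g,h] \wedge [g,[g,h]],
\]
which is precisely the element carrying the exponent $\binom{n}{2}$ on the right-hand side; note that $[g,h]$ has weight $2$ and $[g,[g,h]]$ has weight $3$, and symmetrically $[a,b] = [g,[g,h]] \wedge [g,h]$ is a single decomposable element whose factors have weights $3$ and $2$.

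Next I would check that $[a,b]$ is central in $\langle a, b\rangle$, i.e. that $[a,[a,b]] = [b,[a,b]] = 1$. Both are commutators of two decomposable tensors, so Lemma \ref{L:3}(ii) applies directly: the relevant weight sums are $1+2+3+2 = 8$ for $[a,[a,b]]$ and $1+1+3+2 = 7$ for $[b,[a,b]]$, both $\geq 7$, which forces the two triple commutators to vanish. Hence $[a,b]$ commutes with $a$ and with $b$, so $\langle a,b\rangle$ has nilpotency class at most $2$.

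Finally, in any group in which $[a,b]$ is central one has the collection identity $(ab)^n = a^n b^n [b,a]^{\binom{n}{2}}$, which follows by a straightforward induction on $n$ from the class-$2$ relation $b^n a = [b,a]^n a b^n$ together with $\binom{n}{2} + n = \binom{n+1}{2}$. Since $[b,a]$ is central it commutes past $a^n b^n$, and substituting the value of $[b,a]$ computed above gives
\[
(ab)^n = ([g,h] \wedge [g,[g,h]])^{\binom{n}{2}} (g \wedge [g,h])^n (g \wedge h)^n,
\]
which is exactly (v). The only genuine content is the centrality step; once the weight bound of Lemma \ref{L:3}(ii) has been invoked, the remainder is the standard class-$2$ collection formula and presents no real difficulty.
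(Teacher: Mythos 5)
Your argument for part (v) is correct and is, in substance, the paper's own proof. Both arguments hinge on exactly the same two facts: that $[g\wedge h,\, g\wedge[g,h]] = [g,h]\wedge[g,[g,h]]$ by \eqref{eq:1.2.5}, and that this element commutes with both $g\wedge[g,h]$ and $g\wedge h$ by part (ii) (your weight counts $1+2+3+2\ge 7$ and $1+1+3+2\ge 7$ are the right ones, and match the paper's). The only difference is cosmetic: the paper invokes the general collection formula \eqref{eq:1.5.1} and then kills the error subgroup $M$ and the terms with $r\ge 2$, whereas you establish centrality of the commutator first and then use the elementary class-two identity $(ab)^n = [b,a]^{\binom{n}{2}}a^nb^n$; your route is, if anything, more self-contained. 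Two harmless points you share with the paper: part (ii) is stated for the tensor square but applied to exterior elements, which is legitimate since $G\wedge G$ is a quotient of $G\otimes G$; and the weights of $[g,h]$ and $[g,[g,h]]$ are only \emph{at least} $2$ and $3$ (if these commutators are trivial, the corresponding exterior elements are trivial and the identity degenerates harmlessly).

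The gap is that the statement has five parts and your proposal proves only (v), taking (ii) as given. Using (ii) to prove (v) is not circular as an internal matter --- the paper does exactly that --- but (i) and (ii) are where the real content of the lemma sits, and they do not follow from the defining relations \eqref{eq:1.1.1}, \eqref{eq:1.1.2} and Proposition \ref{P:2} alone. The paper proves them by transporting the question through the isomorphism $\psi\colon G\otimes G\to [G,G^{\phi}]$, where $[G,G^{\phi}]$ sits inside the group $\eta(G)$, and using that for $G$ of class at most $5$ one has $\gamma_7(\eta(G))=1$; then any $[g,h^{\phi}]$, or commutator $[[g_1,h_1^{\phi}],[g_2,h_2^{\phi}]]$, of total weight at least $7$ vanishes, which gives (i) and (ii), and (iii), (iv) follow at once from (ii). Some such argument is indispensable: without it, your proof of (v) rests on an unproved foundation, so as a proof of the lemma as stated the proposal is incomplete.
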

\begin{proof}

\begin{itemize}

\item[$(i)$] Consider the map $\psi : G \otimes G \rightarrow [G, G^{\phi}]$  defined by $\psi(g \otimes h) = [g, h^{\phi}] $, where $G^{\phi}$ is an isomorphic copy of G.
Note that, $w(g) + w(h) \geq 7$  gives $w(g) + w(h^{\phi}) \geq 7$.
Hence $[g, h^{\phi}] \in \gamma_7(\eta(G)) = 1$, which yields $g \otimes h = 1$.

\item[$(ii)$] Again consider the map $\psi$ as in ($i$). 
Since $ w(g_1) +w(h_1) +w(g_2)+w(h_2) \geq 7$, $w(g_1) +w(h_1^{\phi}) +w(g_2)+w(h_2^{\phi}) \geq 7$. Hence
\begin{align*}
\psi([g_1 \otimes h_1, g_2\otimes h_2]) = [[g_1,h_1^{\phi}],[g_2,h_2^{\phi}]] \in \gamma_7(\eta(G)) = 1.
\end{align*}
Therefore $[g_1 \otimes h_1, g_2 \otimes h_2] =1$, giving us the required result.
Moreover, \begin{align*}
^{[g_1,h_1]}(g_2 \otimes h_2) =&\ (g_1\otimes h_1)(g_2\otimes h_2)(g_1\otimes h_1)^{-1} \ \mbox{by\ }\eqref{eq:1.2.2}\\
&=\ [g_1 \otimes h_1, g_2\otimes h_2](g_2 \otimes h_2) \\
&=\ g_2 \otimes h_2.\\
\end{align*}

\item[$(iii)$] For $a \in \gamma_3(G), w(a) \geq 3$. Therefore, $w(a) + w(g) + w(h) \geq 7$ and the result follows from $(ii)$.
\item[$(iv)$] Follows as in $(iii)$.

\item[$(v)$]Using \eqref{eq:1.5.1}, we have
$((g \wedge [g,h])(g \wedge h))^n \equiv \prod_{r = n-1}^{1} [(g \wedge h)_r, g \wedge [g,h]]^{{n}\choose{r+1}} (g \wedge [g,h])^n(g \wedge h)^n$ mod $M$, where $M$ is generated by commutators in $g \wedge [g,h]$ and $g\wedge h$ of weight atleast $2$ in $g \wedge [g,h]$. Now we will show that $M = 1$. Towards that,
\begin{align*}
[[g \wedge h, g \wedge [g,h]], g \wedge [g,h]] &= [[g,h] \wedge [g,[g,h]], g \wedge [g,h]] \ \mbox{by\ }\eqref{eq:1.2.5} \\
&= 1 \ (by\ (ii)). 
\end{align*}

Also note that, 
\begin{align*}
[g \wedge h, [g \wedge h, g \wedge [g,h]]] &= [g \wedge h, [g,h] \wedge [g,[g,h]]] \ \mbox{by\ }\eqref{eq:1.2.5}\\
&= 1\ (by\ (ii)).
\end{align*}
Therefore the product terminates at $r = 1$. The result now follows by applying \eqref{eq:1.2.5} to the only remaining term in the product.

\end{itemize}
\end{proof}
Let us recall the following combinatorial identity which will be used frequently: ${{n}\choose{r}} + {{n}\choose{r-1}} = {{n+1}\choose{r}}$. Note that ${{n}\choose{r}} := 0$, when $ r >n$. The next lemma gives us information about the action of $g^n$ on $g\wedge h$, and will be crucially used in the expansion of $g^n\wedge h$.

\begin{lemma}\label{L:0}
Let $G$ be a nilpotent group of class $5$. Then for $g,h \in G$, we have
$^{g^n}(g \wedge h) = (g \wedge [g,g,g,g,h])^{{n}\choose{4}}(g \wedge [g,g,g,h])^{{n}\choose{3}}(g \wedge [g,g,h])^{{n}\choose{2}}(g \wedge [g,h])^{n}(g \wedge h)$ for all $n\in \mathbb{N}$.
\end{lemma}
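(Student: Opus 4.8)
The natural approach is induction on $n$. The engine is the single-step identity ${}^g(g\wedge c)=(g\wedge[g,c])(g\wedge c)$, valid for every $c\in G$: putting $g_1=g$ in \eqref{eq:1.2.4} gives $g\otimes[g,c]={}^g(g\otimes c)(g\otimes c)^{-1}$, and after rearranging and passing to the quotient $G\wedge G$ this is exactly the stated identity; it is also the base case $n=1$. To keep the bookkeeping manageable I would set $c_i=[g,g,\dots,g,h]$ (right-normed, with $i$ copies of $g$, so that $c_0=h$ and $[g,c_i]=c_{i+1}$) and abbreviate $a_i=g\wedge c_i$, so that the single-step identity reads ${}^g a_i=a_{i+1}a_i$. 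Since $G$ has class $5$, the weight-$6$ element $c_5=[g,c_4]$ is trivial, whence $a_5=g\wedge 1=1$ and ${}^g a_4=a_4$; thus the target formula is just ${}^{g^n}(g\wedge h)=a_4^{\binom n4}a_3^{\binom n3}a_2^{\binom n2}a_1^{n}a_0$.

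For the inductive step I would assume the formula for $n$ and apply the automorphism ${}^g(-)$, using ${}^{g^{n+1}}={}^g\circ{}^{g^n}$ and that the action distributes over products and powers. Each factor $a_i^{\binom ni}$ then becomes $(a_{i+1}a_i)^{\binom ni}$, with $a_5=1$ killing the top contribution. The commutation data I need is minimal: by Lemma \ref{L:3}(ii), $[a_{i+1},a_i]=1$ whenever $(i{+}1)+i\ge 3$, i.e.\ for $i=1,2,3$, since then $w(g)+w(g)+w(c_{i+1})+w(c_i)\ge 7$. These three relations let me split $(a_{i+1}a_i)^{\binom ni}=a_{i+1}^{\binom ni}a_i^{\binom ni}$ for $i\ge 1$, while the bottom factor $(a_1a_0)^{\binom n0}=a_1a_0$ needs no splitting at all.

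The pleasant point, which I expect to be the only delicate one, is that the resulting product already appears in descending index order $a_4,a_4,a_3,a_3,a_2,a_2,a_1,a_1,a_0$, so adjacent like-index powers can be merged without ever moving the terminal $a_0$; this is essential, because the low-weight brackets $[a_0,a_1]$ and $[a_0,a_2]$ (the excluded cases $i+j\le 2$) need not be trivial and would otherwise wreck the closed form. Merging and applying Pascal's rule $\binom nr+\binom n{r-1}=\binom{n+1}r$ to each pair of exponents, namely $\binom n4+\binom n3$, $\binom n3+\binom n2$, $\binom n2+\binom n1$ and $\binom n1+\binom n0$, produces $a_4^{\binom{n+1}4}a_3^{\binom{n+1}3}a_2^{\binom{n+1}2}a_1^{n+1}a_0$, which is precisely the asserted identity at $n+1$ and closes the induction.
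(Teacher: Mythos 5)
Your proof is correct and takes essentially the same route as the paper's: induction on $n$, applying ${}^{g}$ to the formula for the previous step, splitting the resulting products via the commutativity supplied by Lemma \ref{L:3}(ii), discarding the top (weight-six) term, and merging exponents with Pascal's rule. The only differences are cosmetic --- you derive the one-step identity ${}^{g}(g\wedge c)=(g\wedge[g,c])(g\wedge c)$ directly from \eqref{eq:1.2.4} where the paper goes through \eqref{eq:1.1.2} and \eqref{eq:1.2.2}, and your explicit remark that the terminal factor $g\wedge h$ never needs to be moved makes the bookkeeping cleaner than the paper's ``combining the powers of similar terms.''
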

\begin{proof}
We proceed by induction on $n$. The claim clearly holds for $n = 1$ and we prove for $n$. Towards that end,
\begin{align*}
^{g^{n}}(g \wedge h) =&\ ^g{^{g^{n-1}}(g \wedge h)}.
\end{align*}
Applying induction hypothesis and then distributing the action of $g$ onto the individual terms yield
\begin{align*}
^{g^{n}}(g \wedge h) =&\ ^g(g \wedge [g,g,g,g,h])^{{n-1}\choose{4}}\ {^g(g \wedge [g,g,g,h])}^{{n-1}\choose{3}}\ {^g(g \wedge [g,g,h])}^{{n-1}\choose{2}}\\&\ {^g(g \wedge [g,h])}^{n-1}\ {^g(g \wedge h)}.
\end{align*}
Now distribute the action inside each exterior using the identity $^g(g \wedge a) = (g \wedge [g,a]a)$, and then expand each exterior using \eqref{eq:1.1.2}. Note that the actions that arise vanish by \eqref{eq:1.2.2}. Furthermore, by applying Lemma \ref{L:3} ($ii$) to each of the terms with a power, we can distribute the powers. The first term becomes trivial by Lemma \ref{L:3} ($i$). Now combining the powers of similar terms gives the desired result.
\end{proof}

Now we come to an important technical Theorem which is needed in the proof of the main theorem of this section.
\begin{theorem}\label{T:1}
Let $G$ be a nilpotent group of class $5$. Then for $g,h \in G$, we have
\begin{align*}
g^n\wedge h = &\ ([g,h] \wedge [g,g,g,h])^{{n}\choose{4}} ([g,h] \wedge [g,g,h])^{{n}\choose{3}} (g \wedge [g,g,g,g,h])^{{n}\choose{5}}\\
&(g \wedge [g,g,g,h])^{{n}\choose{4}}(g \wedge [g,g,h])^{{n}\choose{3}}(g\wedge [g,h])^{{n}\choose{2}}(g\wedge h)^n 
\end{align*}
for all $n\in \mathbb{N}$.
\end{theorem}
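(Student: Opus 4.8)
The plan is to prove the identity by induction on $n$, mirroring the structure of Lemma \ref{L:0} but now expanding $g^n \wedge h$ rather than the action $^{g^n}(g\wedge h)$. The base case $n=1$ is immediate since all binomial coefficients $\binom{1}{k}$ with $k\geq 2$ vanish, leaving only $g\wedge h$ on the right. For the inductive step I would write $g^n \wedge h = gg^{n-1}\wedge h$ and apply the defining relation \eqref{eq:1.1.1}, which gives $g^n \wedge h = (^g(g^{n-1}\wedge h))\cdot{}^g(\,^g\!h\text{-type term})$; more precisely, using \eqref{eq:1.1.1} in the exterior square we get $g^n\wedge h = (^g g^{n-1}\wedge{}^g h)(g\wedge h)$, so the heart of the matter is to understand $^g(g^{n-1}\wedge h)$ after absorbing the conjugation.

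The key computational engine is Lemma \ref{L:0}: it tells us precisely how $g^{n-1}$, and then a single $g$, act on $g\wedge h$. First I would use relation \eqref{eq:1.1.1} to split off one factor of $g$, writing $g^n \wedge h$ in terms of $g^{n-1}\wedge h$ conjugated by $g$ together with a correction term, and then feed in Lemma \ref{L:0} to rewrite the conjugation $^{g}(\,\cdot\,)$ explicitly. Each factor of the form $^g(g\wedge [g,\dots,h])$ is expanded using the action rule $^g(g\wedge a) = (g\wedge[g,a]a)$ already invoked in Lemma \ref{L:0}. The commutator terms $[g,h]\wedge[g,g,g,h]$ and $[g,h]\wedge[g,g,h]$ in the target expression must arise from the correction factors produced when the leading $g$ interacts with the tensor symbol; here I expect to use \eqref{eq:1.2.5} and Lemma \ref{L:3}(ii) to see which brackets survive and which vanish by weight considerations (any term whose total weight reaches $7$ dies).

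The decisive step is the bookkeeping of binomial coefficients. After expanding, similar terms $g\wedge[g,\dots,h]$ will appear with coefficients coming from the inductive hypothesis at $n-1$ plus the coefficient generated in passing from $n-1$ to $n$; collapsing these via Pascal's rule $\binom{n-1}{r}+\binom{n-1}{r-1}=\binom{n}{r}$ is what produces the clean binomial coefficients $\binom{n}{k}$ in the statement. I would verify each of the seven factors separately: the pure $g\wedge[\,\cdot\,]$ factors should telescope directly by Pascal, while the two mixed factors $[g,h]\wedge[\,\cdot\,]$ require matching the correction contributions against the $\binom{n}{4}$ and $\binom{n}{3}$ targets, possibly using Lemma \ref{L:3}(v) to handle the quadratic cross-term that arises from squaring-type interactions.

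I expect the main obstacle to be precisely this coefficient accounting together with the commutativity control: because $G\wedge G$ is nonabelian, rearranging factors into the prescribed order incurs commutators via \eqref{eq:1.2.6}, and I must confirm that every such commutator has high enough weight to vanish by Lemma \ref{L:3}(i),(ii). In other words, the genuine difficulty is not any single algebraic identity but showing that the whole expansion closes up consistently—that all the ``error'' commutators introduced while sorting terms and distributing actions lie in $\gamma_7$ and hence are trivial, so that only the claimed seven factors with their Pascal-simplified exponents remain.
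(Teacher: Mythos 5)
Your plan is correct and proves the theorem, but it is the mirror image of the paper's argument, and the difference is worth noting. Both proofs induct on $n$ via relation \eqref{eq:1.1.1}; the paper, however, splits $g^n = g^{n-1}\cdot g$, writing $g^n\wedge h = {}^{g^{n-1}}(g\wedge h)\,(g^{n-1}\wedge h)$, so that Lemma \ref{L:0} applies at full strength to the first factor, the induction hypothesis applies to the untouched second factor, and every Pascal increment is produced in a single reordering step: pushing the lone $g\wedge h$ rightward past $(g\wedge [g,g,h])^{\binom{n-1}{3}}$ and $(g\wedge [g,h])^{\binom{n-1}{2}}$ creates, via \eqref{eq:1.2.5}, the central factors $([g,h]\wedge [g,g,g,h])^{\binom{n-1}{3}}$ and $([g,h]\wedge [g,g,h])^{\binom{n-1}{2}}$, after which Pascal's rule finishes. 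Your split $g^n = g\cdot g^{n-1}$, giving $g^n\wedge h = {}^{g}(g^{n-1}\wedge h)\,(g\wedge h)$, never uses Lemma \ref{L:0} for general exponent --- only its $n=1$ instance ${}^g(g\wedge h)=(g\wedge [g,h])(g\wedge h)$ and the action rule from its proof --- so describing it as the ``key computational engine'' slightly misstates its role; the load is instead carried by conjugating each of the seven factors of the inductive expansion by a single $g$. In that route your invocation of Lemma \ref{L:3}(v) is genuinely indispensable, not optional: ${}^g\bigl((g\wedge h)^{n-1}\bigr) = \bigl((g\wedge [g,h])(g\wedge h)\bigr)^{n-1}$ is a power of a product of two non-commuting wedges, and Lemma \ref{L:3}(v) is exactly what expands it, supplying the increment $([g,h]\wedge [g,g,h])^{\binom{n-1}{2}}$; the other mixed increment $([g,h]\wedge [g,g,g,h])^{\binom{n-1}{3}}$ arises from ${}^g\bigl([g,h]\wedge [g,g,h]\bigr)$, where one also needs $a\wedge a = 1$ to kill the cross term $[g,g,h]\wedge [g,g,h]$. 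The paper's proof requires neither of these observations. Your weight bookkeeping does close up as you anticipate: all remaining conjugation corrections and reordering commutators have total weight at least $7$, and in your expansion the non-central factors $(g\wedge [g,g,h])$, $(g\wedge [g,h])$, $(g\wedge h)$ already occur in the correct relative order, so no further error terms appear. In short, both routes are sound and comparably long; the paper's choice localizes all corrections in one reordering step, while yours avoids ever needing the general-$n$ statement of Lemma \ref{L:0} at the cost of a term-by-term conjugation and an extra appeal to Lemma \ref{L:3}(v).
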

\begin{proof}
We proceed by induction on $n$. It can be easily seen that the statement holds for $n = 1$. We will prove for a general $n$. Towards that, we have
\begin{align*}
g^n \wedge h =\ ^{g^{n-1}}(g\wedge h)(g^{n-1} \wedge h).
\end{align*}
Now applying Lemma \ref{L:0} to the first term and using induction hypothesis for the second term yields
\begin{align*}
g^n \wedge h =&\ (g \wedge [g,g,g,g,h])^{{n-1}\choose{4}}(g \wedge [g,g,g,h])^{{n-1}\choose{3}}(g \wedge [g,g,h])^{{n-1}\choose{2}}\\&\ (g \wedge [g,h])^{n-1}(g \wedge h)([g,h] \wedge [g,g,g,h])^{{n-1}\choose{4}} ([g,h] \wedge [g,g,h])^{{n-1}\choose{3}}\\&\ (g \wedge [g,g,g,g,h])^{{n-1}\choose{5}}(g \wedge [g,g,g,h])^{{n-1}\choose{4}}(g \wedge [g,g,h])^{{n-1}\choose{3}}\\&\ (g\wedge [g,h])^{{n-1}\choose{2}}(g\wedge h)^{n-1}.
\end{align*}
By Lemma \ref{L:3} ($ii$), $g \wedge h$ commutes with all the terms except $(g \wedge [g,g,h])$ and $(g \wedge [g,h])$. Similarly, every term except $g \wedge h$ commutes with one another. Now collecting similar terms using the formula $ab = [a,b]ba$ yields the result we seek.
\end{proof}

\begin{lemma}\label{L:4}
Let $G$ be a group of nilpotency class $5$. Then the following hold for $g,h,a,b,c \in G$ and $n \in \mathbb{N}$.
\begin{itemize}
\item[(i)] If $w(h) \geq 3$, then $(g \wedge h)^n =\ (g \wedge h^n)$. Also, $(h \wedge g)^n =\ (h^n \wedge g)$.
\item[(ii)]$[a,b]^n\wedge g = ([a,b]\wedge [[a,b],g])^{{n}\choose{2}}([a,b]\wedge g)^n$ and $g \wedge [a,b]^n=\ ([a,b] \wedge [g, [a,b]])^{{n}\choose{2}}(g \wedge [a,b])^n$.
\item[(iii)] $(g \wedge h)([g,h]\wedge [a,b])^n = ([g,h] \wedge [[g,h],[a,b]])^n([g,h] \wedge [a,b])^n(g \wedge h) $.
\item[(iv)] $(a \wedge b)([g,h]\wedge [a,b])^n = ([a,b]\wedge [[g, h], [a,b]])^n([g,h] \wedge [a,b])^n(a \wedge b) $.
\end{itemize}
\end{lemma}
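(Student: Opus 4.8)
The plan is to dispatch all four parts with the machinery already built in this section. Parts (i) and (ii) linearise a power and so are proved by induction on $n$, exactly as in Lemma \ref{L:0} and Theorem \ref{T:1}, using the expansion relations \eqref{eq:1.1.1}--\eqref{eq:1.1.2} and discarding high-weight terms via Lemma \ref{L:3}; parts (iii) and (iv) are commutation identities and are obtained by a direct conjugation computation resting on \eqref{eq:1.2.5}.

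For (i), since $w(h)\ge 3$ we have $h\in\gamma_3(G)$, and hence $h^{n-1}\in\gamma_3(G)$ as well. Expanding with \eqref{eq:1.1.2} gives $g\wedge h^{n}=(g\wedge h)\,{}^{h}(g\wedge h^{n-1})$. Because $w(g)+w(h^{n-1})\ge 4$, Lemma \ref{L:3}(iii) makes the element $h\in\gamma_3(G)$ act trivially, so $^{h}(g\wedge h^{n-1})=g\wedge h^{n-1}$ and the induction closes to $(g\wedge h)^{n}$. The identity $(h\wedge g)^{n}=(h^{n}\wedge g)$ is the same computation carried out in the left-hand slot through \eqref{eq:1.1.1}.

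For (ii), set $x=[a,b]$, an element of weight at least $2$; the assertion is the weight-two analogue of Theorem \ref{T:1}. Running the induction of Lemma \ref{L:0} with base element $x$, every higher commutator term drops out: $[x,[x,g]]$ has weight at least $5$, so $x\wedge[x,[x,g]]$ vanishes by Lemma \ref{L:3}(i), leaving $^{x^{n}}(x\wedge g)=(x\wedge[x,g])^{n}(x\wedge g)$. Inserting this into $x^{n}\wedge g={}^{x^{n-1}}(x\wedge g)(x^{n-1}\wedge g)$ and inducting, the only collection needed is to move $(x\wedge g)$ past a power of $(x\wedge[x,g])$; by \eqref{eq:1.2.5} their commutator equals $[x,g]\wedge[x,[x,g]]$, whose four entries have total weight at least $8$, so it vanishes by Lemma \ref{L:3}(ii). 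The exponents then combine through $\binom{n-1}{2}+(n-1)=\binom{n}{2}$ to give the stated formula, and the companion identity for $g\wedge[a,b]^{n}$ follows by the mirror-image induction in the second slot.

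For (iii) and (iv) I would conjugate rather than expand. Put $v=[g,h]\wedge[a,b]$ and let $u=g\wedge h$ for (iii) and $u=a\wedge b$ for (iv); in both cases it suffices to show $u v^{n} u^{-1}=[u,v]^{n}v^{n}$, since then $u v^{n}=[u,v]^{n}v^{n}u$ is precisely the claim. Relation \eqref{eq:1.2.5} evaluates the commutator: $[u,v]=[g,h]\wedge[[g,h],[a,b]]$ in case (iii) and $[u,v]=[a,b]\wedge[[g,h],[a,b]]$ in case (iv), matching the leading factors in the statement. Thus $u v u^{-1}=[u,v]\,v$, and the crux is that $[u,v]$ commutes with $v$: the four exterior entries of $[u,v]$ and $v$ have total weight at least $10$, so $[[u,v],v]=1$ by Lemma \ref{L:3}(ii), whence $(uvu^{-1})^{n}=[u,v]^{n}v^{n}$. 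The inductions and the binomial arithmetic are routine; the step requiring genuine care is the weight accounting, namely checking at each collection that the commutators produced by \eqref{eq:1.2.5} have entries of total weight at least $7$ so that Lemma \ref{L:3} forces them to disappear, and I expect part (ii) to be the main obstacle, since one must organise the second-slot expansion so that exactly the same cancellations recur.
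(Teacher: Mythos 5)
Your proposal is correct and takes essentially the same route as the paper: induction via \eqref{eq:1.1.2} with Lemma \ref{L:3}(iii) for part (i), the power-expansion identity for a weight-two element for part (ii) (you re-run the induction of Lemma \ref{L:0}/Theorem \ref{T:1}, whereas the paper simply specializes Theorem \ref{T:1} and kills high-weight terms by Lemma \ref{L:3}(i)), and a conjugation computation resting on \eqref{eq:1.2.5} and high-weight centrality for parts (iii) and (iv), where the paper equivalently uses Proposition \ref{P:2}(iii) followed by part (i). The remaining differences are cosmetic: the paper gets the second identity of (ii) by taking inverses of the first rather than by your mirror induction, and both variants of (iii)--(iv) are valid.
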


\begin{proof}
\begin{itemize}
\item[($i$)] We proceed by induction on $n$. Note that the claim holds for $n = 1$. Now we will prove it for $n$. Write $h^n= hh^{n-1}$. Expanding using \eqref{eq:1.1.2} and then applying induction hypothesis yields 
\begin{align*}
g \wedge h^n  &=\ (g \wedge h)(^{h}(g\wedge h))^{n-1}.
\end{align*}
Applying Lemma \ref{L:3} $(iii)$, we obtain the result.

\item[($ii$)] Applying Theorem \ref{T:1} to $[a,b]^n\wedge g$ and then using Lemma \ref{L:3} ($i$) yields
\begin{align*}
[a,b]^n\wedge g =&\ ([a,b]\wedge [[a,b],g])^{{n}\choose{2}}([a,b]\wedge g)^n.
\end{align*}
Now taking inverse on both sides to the first identity and observing that $([a,b]\wedge [[a,b],g])^{-1} = ([a, b]\wedge [g, [a, b]])$ will give the second identity.

\item[($iii$)] By Proposition \ref{P:2} ($iii$), we have
\begin{align*}
(g \wedge h)([g,h]\wedge [a,b])^n(g \wedge h)^{-1}([g,h]\wedge [a,b])^{-n} =&\ ([g,h] \wedge [[g,h],[a,b]]^{n}).
\end{align*}
Now rearranging terms and applying $(i)$ to $([g,h] \wedge [[g,h],[a,b]]^{n})$ yields the result we seek.

\item[($iv$)] The proof follows mutatis mutandis the proof of $(iii)$.
\end{itemize}
\end{proof}

Before going to the next lemma, we will state a few commutator identities that we will use frequently in the sequel.

\begin{lemma}\label{L:10}
For $g,g_1,h,h_1 \in G$, we have
\begin{itemize}
\item[(i)]\begin{equation}\label{eq:1.11.1}
[gg_1,h] =\ ^g[g_1,h][g,h].
\end{equation}
\item[(ii)]\begin{equation}\label{eq:1.11.2}
[g,hh_1] = [g,h]^h[g,h_1].
\end{equation}
\item[(iii)]\begin{equation}\label{eq:1.11.3}
gh = [g,h]hg.
\end{equation}
\item[(iv)]\begin{equation}\label{eq:1.11.4}
^gh = [g,h]h. 
\end{equation}
\item[(v)]\begin{equation*}
 [g,abcd] = [g,a]^a[g,b]^{ab}[g,c]^{abc}[g,d].
\end{equation*}
\begin{equation}\label{eq:1.11.5}
g \wedge abcd = (g \wedge a)^a(g \wedge b)^{ab}(g \wedge c)^{abc}(g \wedge d). 
\end{equation}
\item[(vi)]\begin{equation*}
[abcd,g] =\ ^{abc}[d,g]^{ab}[c,g]^{a}[b,g][a,g].
\end{equation*}
\begin{equation}\label{eq:1.11.6}
abcd \wedge g =\ ^{abc}(d \wedge g)^{ab}(c \wedge g)^a(b \wedge g)(a \wedge g).
\end{equation}
\end{itemize}
\end{lemma}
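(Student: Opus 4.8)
The plan is to treat the six identities in two groups. Items (i)--(iv) are the basic commutator formulas and hold in any group, so I would establish each by direct substitution of the definitions $[g,h]=ghg^{-1}h^{-1}$ and ${}^gh=ghg^{-1}$ followed by cancellation. For (i) I would expand ${}^g[g_1,h][g,h]=g(g_1hg_1^{-1}h^{-1})g^{-1}\cdot ghg^{-1}h^{-1}$, cancel the interior $g^{-1}g$, and recognise the outcome as $gg_1hg_1^{-1}g^{-1}h^{-1}=[gg_1,h]$; (ii) is entirely analogous, and (iii), (iv) are one-line cancellations. Note that none of these require the nilpotency hypothesis, consistent with the lemma being stated for an arbitrary $G$.

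For the commutator part of (v) I would iterate (ii): writing $[g,abcd]=[g,a]\,{}^a[g,bcd]$ and then expanding $[g,bcd]$ and $[g,cd]$ the same way produces the telescoping product $[g,a]\,{}^a[g,b]\,{}^{ab}[g,c]\,{}^{abc}[g,d]$. Dually, for the commutator part of (vi) I would iterate (i), peeling factors off the left: $[abcd,g]={}^{abc}[d,g][abc,g]$, then $[abc,g]={}^{ab}[c,g][ab,g]$, and $[ab,g]={}^a[b,g][a,g]$, which assemble into the stated expression.

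The only part that warrants genuine care is the exterior-square analogues \eqref{eq:1.11.5} and \eqref{eq:1.11.6}. Here I would first rewrite the defining relations \eqref{eq:1.1.1} and \eqref{eq:1.1.2} in terms of the conjugation action, using ${}^x(a\otimes b)=({}^xa\otimes{}^xb)$, so that \eqref{eq:1.1.1} becomes $xy\otimes g={}^x(y\otimes g)(x\otimes g)$ and \eqref{eq:1.1.2} becomes $g\otimes xy=(g\otimes x)\,{}^x(g\otimes y)$. Iterating these exactly as in the commutator computations above --- peeling off one factor at a time and absorbing the accumulated prefix into the left superscript --- yields $g\otimes abcd=(g\otimes a)\,{}^a(g\otimes b)\,{}^{ab}(g\otimes c)\,{}^{abc}(g\otimes d)$ together with the corresponding expansion of $abcd\otimes g$. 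Since these are consequences of the generating relations alone, they descend to the quotient $G\wedge G=(G\otimes G)/\nabla(G)$, giving the two exterior identities.

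The main obstacle I anticipate is purely bookkeeping: keeping the left-conjugation superscripts and the order of the factors consistent between the commutator and the tensor computations, since \eqref{eq:1.1.1} expands from the left while \eqref{eq:1.1.2} expands from the right. Once the action form of the two defining relations is fixed, the entire lemma reduces to repeated application of a single step, and no further input --- in particular no nilpotency assumption --- is needed.
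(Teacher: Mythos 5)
Your proof is correct. The paper itself offers no proof of this lemma---it is stated as a list of standard identities to be used in the sequel---so your argument is precisely the canonical filling-in: direct expansion from $[g,h]=ghg^{-1}h^{-1}$ and ${}^gh=ghg^{-1}$ for (i)--(iv), telescoping iteration of (i) and (ii) for (v)--(vi), and the same iteration at the level of $G\otimes G$ for the wedge versions. The only step worth making fully explicit is the identity ${}^x(a\otimes b)={}^xa\otimes{}^xb$ for the diagonal conjugation action on $G\otimes G$, which you use to rewrite the defining relations \eqref{eq:1.1.1} and \eqref{eq:1.1.2} in peel-off-one-factor form; this is standard and is used implicitly throughout the paper (for instance in Proposition \ref{P:2}), and since $\nabla(G)$ is stable under this action, the resulting identities do descend to $G\wedge G$ exactly as you claim.
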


In the next lemma, we give conditions which enable us to pull powers outside a commutator.

\begin{lemma}\label{L:5}
Let $G$ be a  group of class $5$. Then the following hold for $g, g_1, h \in G$:
\begin{itemize}
\item[(i)] If\ $w(g) + w(h) \geq 6,\ then\ ^g h = h$.
\item[(ii)] If\ $w(h) \geq 3,\ then\ [g, h^n]= [g ,h]^n$.
\item[(iii)] If $ w(h) \geq 4,\ then\ [g^n, h]= [g, h]^n$.
\item[(iv)] $ [g_1,[g, h]^n] = [g_1,[g, h]]^n[[g, h], [g_1, g, h]]^{{n}\choose{2}}$.
\end{itemize}
\end{lemma}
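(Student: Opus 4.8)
The plan is to prove the four identities of Lemma \ref{L:5} by exploiting the fact that in a group of class $5$, high-weight commutators either vanish or become central. Throughout I would use the weight bookkeeping already established: a commutator $[x,y]$ has weight $w(x)+w(y)$, and any commutator of weight $\geq 6$ is trivial since $\gamma_6(G)=1$.

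For part $(i)$, I would simply observe that $^gh=[g,h]h$ by \eqref{eq:1.11.4}, and the hypothesis $w(g)+w(h)\geq 6$ forces $[g,h]\in\gamma_6(G)=1$, so $^gh=h$. For parts $(ii)$ and $(iii)$ the strategy is the same and amounts to showing that the correction terms in the standard commutator expansions lie in the trivial group. For $(ii)$, I would expand $[g,h^n]$ by iterating \eqref{eq:1.11.2}; each step introduces a conjugation $^{h^j}[g,h]$, and since $w([g,h])=w(g)+w(h)\geq 1+3=4$, part $(i)$ (or the action triviality already recorded in Lemma \ref{L:3}) makes these conjugations trivial, collapsing the product to $[g,h]^n$. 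For $(iii)$, I would run the analogous expansion of $[g^n,h]$ via \eqref{eq:1.11.1}, where $w([g,h])\geq 1+4=5$, and the same weight count kills every conjugation, yielding $[g,h]^n$; here I would also invoke $(i)$ to discard the inner conjugations, and note that $w(h)\geq 4$ is exactly the threshold needed to push every stray commutator into weight $\geq 6$.

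Part $(iv)$ is the genuinely computational one and I expect it to be the main obstacle, since neither $g$ nor $[g,h]$ need have high weight, so the correction terms survive rather than vanish. The cleanest route is induction on $n$. Writing $[g_1,[g,h]^n]=[g_1,[g,h]^{n-1}[g,h]]$ and applying \eqref{eq:1.11.2} gives
\begin{align*}
[g_1,[g,h]^n]=[g_1,[g,h]^{n-1}]\,^{[g,h]^{n-1}}[g_1,[g,h]].
\end{align*}
I would then feed in the induction hypothesis for $[g_1,[g,h]^{n-1}]$ and resolve the conjugation $^{[g,h]^{n-1}}[g_1,[g,h]]$ using \eqref{eq:1.11.4}, which produces a factor $[[g,h]^{n-1},[g_1,g,h]]$. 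Since $[g_1,g,h]=[g_1,[g,h]]$ has weight $\geq 3$ while $[g,h]$ has weight $\geq 2$, I can apply $(ii)$ (or a direct weight argument) to pull the exponent out of $[[g,h]^{n-1},[g_1,g,h]]$, turning it into $[[g,h],[g_1,g,h]]^{n-1}$, and all higher corrections die by weight $\geq 6$. Collecting the powers of $[[g,h],[g_1,g,h]]$ then requires the Pascal identity ${{n-1}\choose{2}}+(n-1)={{n}\choose{2}}$ already recalled before Lemma \ref{L:0}, which is precisely what produces the binomial coefficient $\binom{n}{2}$ in the claimed formula.

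Care is needed in verifying that every correction commutator genuinely has weight at least $6$ and is therefore central or trivial, and in checking that the various terms commute so that the powers may be collected freely; Lemma \ref{L:3}$(ii)$ and the class-$5$ hypothesis are what guarantee this. The only subtlety is ordering the factors consistently, but since the surviving correction term $[[g,h],[g_1,g,h]]$ has weight $\geq 2+3=5$ and hence is central (its commutator with anything has weight $\geq 6$), the collection is unambiguous and the induction closes cleanly.
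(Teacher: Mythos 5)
Your proof is correct and takes essentially the same route as the paper's: parts (i)--(iii) are the identical weight arguments, and (iv) is the same induction built on \eqref{eq:1.11.2}, \eqref{eq:1.11.4}, centrality of the weight-$\geq 5$ element $[[g,h],[g_1,g,h]]$, and the Pascal identity; the only cosmetic difference is that you split $[g,h]^n$ as $[g,h]^{n-1}[g,h]$, whereas the paper splits off a single $[g,h]$ on the left so that its conjugator is $[g,h]$ rather than $[g,h]^{n-1}$. One caveat on (iv): your citation of (ii) to rewrite $[[g,h]^{n-1},[g_1,g,h]]$ as $[[g,h],[g_1,g,h]]^{n-1}$ is not literally licensed, since the powered entry $[g,h]$ has weight only $\geq 2$, and recasting it via (iii) also fails because $w([g_1,g,h])\geq 3$ falls short of the threshold $4$; the step must therefore rest on your parenthetical ``direct weight argument,'' which does work -- $[[g,h],[g_1,g,h]]$ is central, so iterating \eqref{eq:1.11.1} produces only trivial conjugations and the exponent pulls out.
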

\begin{proof}
\item[($i$)] The result follows since $ ^gh =[g, h]h$ and $[g, h] \in \gamma_6(G).$
\item[($ii$)] We proceed by induction on $n$. The claim being true for $n =1$, we prove for $n$. Write $h^n$ in $[g, h^n]$ as $hh^{n-1}$ and then expand  using \eqref{eq:1.11.2}.
Now by $(i)$, the action in the expansion becomes trivial and the result follows by applying the induction hypothesis.

\item[($iii$)]We use induction on $n$. The claim is true for $n = 1$ and we prove for $n$. Write $g^n$ in $[g^n, h]$ as $gg^{n-1}$ and then expand using \eqref{eq:1.11.1}. 
Again, by $(i)$ the action in the expansion becomes trivial and the result follows by applying the induction hypothesis.

\item[($iv$)] Again we use induction. The claim being true for $n =1$, we will prove for $n$. Expanding using \eqref{eq:1.11.2} and then applying induction hypothesis yields
\begin{align*}
[g_1,[g, h]^n] &= [g_1,[g, h]]^{[g, h]}([g_1,[g, h]]^{n-1}[[g, h], [g_1, g, h]]^{{n-1}\choose{2}}).
\end{align*}
Now $^{[g, h]}[g_1, [g, h]] = [[g, h], [g_1, g, h]][g_1, g, h]$ by \eqref{eq:1.11.4}  and $[[g, h], [g_1, g, h]]$ commutes with $[g, h], [g_1, g, h]$ by $(i)$. Then we get the result by combining powers of similar terms.

\end{proof} 

The next lemma provides some useful information about the commutator $[g_2 \wedge h_2, (g_1 \wedge h_1)^n]$.

\begin{lemma}\label{L:6}
Let $G$ be a $p$-group of class $5$. Then for $g_i, h_i \in G,$ where $i \in \{1,2\}$, we have 
\begin{align*}
(g_2 \wedge h_2)(g_1 \wedge h_1)^n =& ([g_1,h_1]\wedge [[g_2, h_2], [g_1,h_1]])^{{n}\choose{2}}([g_2,h_2] \wedge [g_1,h_1])^n\\
& (g_1 \wedge h_1)^n(g_2\wedge h_2),
\end{align*}
for all $n\in \mathbb{N}$.
\end{lemma}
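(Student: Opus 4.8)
The plan is to prove this by induction on $n$, mirroring the strategy already used for Lemma \ref{L:5}(iv) and Theorem \ref{T:1}. The base case $n=1$ is exactly the commutation relation \eqref{eq:1.2.6} from Proposition \ref{P:2}, since for $n=1$ the binomial coefficient $\binom{1}{2}=0$ kills the first factor and the identity reduces to $(g_2\wedge h_2)(g_1\wedge h_1)=([g_1,h_1]\wedge[g_2,h_2])^{-1}\cdots$, which is precisely \eqref{eq:1.2.6} after rewriting $([g_2,h_2]\wedge[g_1,h_1])=[g_2\wedge h_2,g_1\wedge h_1]$ via \eqref{eq:1.2.5}. So first I would write $(g_1\wedge h_1)^n=(g_1\wedge h_1)(g_1\wedge h_1)^{n-1}$ and push $(g_2\wedge h_2)$ past the single factor $(g_1\wedge h_1)$ using the $n=1$ case, then past $(g_1\wedge h_1)^{n-1}$ using the induction hypothesis.

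After applying the inductive step, the raw product I obtain will contain two ``commutator-of-exterior'' correction factors that need to be merged: one of shape $([g_1,h_1]\wedge[[g_2,h_2],[g_1,h_1]])^{\binom{n-1}{2}}$ coming from the hypothesis and one of shape $([g_2,h_2]\wedge[g_1,h_1])^{n-1}$ whose interaction with the extra $([g_2,h_2]\wedge[g_1,h_1])$ from the base case produces, via Lemma \ref{L:4}(iii), an additional factor $([g_1,h_1]\wedge[[g_1,h_1],[g_2,h_2]])^{n-1}$. The key accounting step is to combine the exponents $\binom{n-1}{2}+(n-1)=\binom{n}{2}$ on the $([g_1,h_1]\wedge[[g_2,h_2],[g_1,h_1]])$ factor (after noting $[[g_1,h_1],[g_2,h_2]]=[[g_2,h_2],[g_1,h_1]]^{-1}$ so the two correction terms land on the same base with compatible signs), and to combine $([g_2,h_2]\wedge[g_1,h_1])^{n-1}$ with the single base-case copy to get $([g_2,h_2]\wedge[g_1,h_1])^n$.

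The central obstacle is verifying that all these correction factors are \emph{central enough} to be freely reordered, and that the factors of weight-heavy exteriors created when moving things around actually vanish. Here the weight bookkeeping of Lemma \ref{L:3}(ii) is essential: the corrections live in exteriors like $[g_1,h_1]\wedge[[g_2,h_2],[g_1,h_1]]$, whose entries have weight sum at least $7$, so by Lemma \ref{L:3}(ii) they are central in $G\wedge G$ and any further commutators among them are trivial. I would also use Lemma \ref{L:4}(iii)--(iv) to govern exactly how $([g_2,h_2]\wedge[g_1,h_1])$ slides past $(g_1\wedge h_1)$ and $(g_2\wedge h_2)$, since those lemmas were tailored to precisely this kind of movement.

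Concretely, I expect the computation to run as follows:
\begin{align*}
(g_2\wedge h_2)(g_1\wedge h_1)^n
&=(g_2\wedge h_2)(g_1\wedge h_1)(g_1\wedge h_1)^{n-1}\\
&=([g_2,h_2]\wedge[g_1,h_1])(g_1\wedge h_1)(g_2\wedge h_2)(g_1\wedge h_1)^{n-1},
\end{align*}
where the second line is the base case \eqref{eq:1.2.6}. Then I apply the induction hypothesis to $(g_2\wedge h_2)(g_1\wedge h_1)^{n-1}$, use Lemma \ref{L:4}(iii) to commute the leftmost $([g_2,h_2]\wedge[g_1,h_1])$ past the intervening $(g_1\wedge h_1)$, and finally invoke Lemma \ref{L:3}(ii) to collapse all higher-weight commutators to $1$ so that the exponents add via $\binom{n-1}{2}+\binom{n-1}{1}=\binom{n}{2}$. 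The main risk is a sign error in identifying $[[g_2,h_2],[g_1,h_1]]$ with the inverse of its swap, so I would track that orientation carefully throughout.
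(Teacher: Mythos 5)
Your proof strategy is correct, but it takes a genuinely different route from the paper's. The paper argues directly, with no induction: by the crossed-module property (Proposition \ref{P:2}, via \eqref{eq:1.2.5}) the commutator $[g_2\wedge h_2,(g_1\wedge h_1)^n]$ collapses to the single exterior $[g_2,h_2]\wedge[g_1,h_1]^n$, which Lemma \ref{L:4}(ii) expands as $([g_1,h_1]\wedge[[g_2,h_2],[g_1,h_1]])^{\binom{n}{2}}([g_2,h_2]\wedge[g_1,h_1])^n$; rearranging these central-enough factors gives the statement in two lines. You instead peel off one factor of $(g_1\wedge h_1)$ at a time: base case \eqref{eq:1.2.6}, induction hypothesis, centrality of $[g_1,h_1]\wedge[[g_2,h_2],[g_1,h_1]]$ (entries of weight at least $2$ and $4$, so it commutes with every generator by Lemma \ref{L:3}(ii)), and a sliding identity, closing with $\binom{n-1}{2}+(n-1)=\binom{n}{2}$. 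What the paper's route buys is brevity, since Lemma \ref{L:4}(ii) (itself a consequence of Theorem \ref{T:1}) already packages the binomial coefficient; what yours buys is independence from Lemma \ref{L:4}(ii) and Theorem \ref{T:1}, re-deriving the coefficient recursively from the $n=1$ commutation relation alone, at the cost of more bookkeeping. Two slips to correct in the write-up: the sliding identity you need is Lemma \ref{L:4}(iv), not (iii), because the moving element $g_1\wedge h_1$ matches the \emph{second} slot of $[g_2,h_2]\wedge[g_1,h_1]$; and if you apply (iv) in the direction $(g_1\wedge h_1)([g_2,h_2]\wedge[g_1,h_1])^{n-1}=([g_1,h_1]\wedge[[g_2,h_2],[g_1,h_1]])^{n-1}([g_2,h_2]\wedge[g_1,h_1])^{n-1}(g_1\wedge h_1)$, the correction factor already carries the same orientation as the one coming from the induction hypothesis, so the sign issue you flag (and the inverse orientation $[[g_1,h_1],[g_2,h_2]]$ appearing in your middle paragraph, which is inconsistent with the exponent arithmetic you state) never actually arises; the inverse only shows up if you instead push the single base-case factor rightward past $(g_1\wedge h_1)$, in which case the accounting $\binom{n-1}{2}-1+n=\binom{n}{2}$ still closes.
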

\begin{proof}
 By Proposition \ref{P:2}, we have
\begin{align*}
(g_2 \wedge h_2)(g_1 \wedge h_1)^n(g_2 \wedge h_2)^{-1}(g_1 \wedge h_1)^{-n} =\ ([g_2,h_2]\wedge [g_1,h_1]^n) .
\end{align*} 
Now applying Lemma \ref{L:4} $(ii)$ to $([g_2,h_2]\wedge [g_1,h_1]^n)$ and rearranging the terms yields the result.
\end{proof}

Let $G$ be a group and let $a,b\in G$. The next Theorem is similar to the expansion of $(ab)^n$, where $a,b\in G$ for any group $G$. It will be crucially used in the proof of the main theorem.

\begin{theorem}\label{T:2}
Let $G$ be a group of class $5$. Then for $g_i, h_i \in G,$ where $i \in \{1,2\}$, we have
\begin{align*}
((g_1 \wedge h_1)(g_2 \wedge h_2))^n =&\ ([g_2,h_2]\wedge [[g_2,h_2],[g_1,h_1]])^{{n}\choose{3}}([g_1,h_1]\wedge [[g_2, h_2], [g_1,h_1]])^{2{{n}\choose{3}}+{{n}\choose{2}}}\\&\ ([g_2,h_2] \wedge [g_1,h_1])^{{n}\choose{2}} (g_1 \wedge h_1)^n(g_2\wedge h_2)^n,
\end{align*}
for all $n\in \mathbb{N}$.
\end{theorem}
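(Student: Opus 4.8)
The plan is to read $\bigl((g_1\wedge h_1)(g_2\wedge h_2)\bigr)^n$ as a collection problem inside the subgroup of $G\wedge G$ generated by $a=g_1\wedge h_1$ and $b=g_2\wedge h_2$, and to induct on $n$. First I would record the three exterior elements that appear, namely $C_1=[g_2,h_2]\wedge[g_1,h_1]$, $C_2=[g_1,h_1]\wedge[[g_2,h_2],[g_1,h_1]]$ and $C_3=[g_2,h_2]\wedge[[g_2,h_2],[g_1,h_1]]$, and pin down their commutation behaviour. Using \eqref{eq:1.2.5} one gets $[b,a]=C_1$, $[a,C_1]=C_2$ and $[b,C_1]=C_3$. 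Since each of $C_2,C_3$ is an exterior element $u\wedge v$ with $w(u)+w(v)\geq 6$, any commutator of $C_2$ or $C_3$ with $a$, $b$ or $C_1$ has total weight $\geq 7$ and hence vanishes by Lemma \ref{L:3}(ii). Thus $C_2,C_3$ are central in $\langle a,b\rangle$, which is therefore nilpotent of class at most $3$.

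It is worth noting why a bare appeal to \eqref{eq:1.5.1} does not suffice. Applied with $g=a$ and $h=b$ it gives $(ab)^n\equiv C_1^{\binom n2}C_3^{\binom n3}a^nb^n$ modulo the subgroup $M$ generated by commutators of weight $\geq 2$ in $a$, the terms $[b,b,b,a]$ and higher vanishing by Lemma \ref{L:3}(ii). The missing factor $C_2^{2\binom n3+\binom n2}$ lies precisely in $M$, so \eqref{eq:1.5.1} is not sharp enough; a direct induction that keeps track of the weight-$2$-in-$a$ contributions is needed.

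For the induction I would first derive the elementary collection identities $aC_1=C_1C_2a$ and $bC_1=C_1C_3b$ from the relations above, and then, by short inductions, $a^jC_1^k=C_1^kC_2^{jk}a^j$ and $b^ma=C_1^mC_3^{\binom m2}ab^m$ for all $j,k,m$ (the latter being the exterior analogue of Lemma \ref{L:6}, which already supplies the companion identity $ba^m=C_2^{\binom m2}C_1^m a^m b$). Writing $(ab)^n=(ab)^{n-1}(ab)$ and inserting the inductive hypothesis reduces the step to simplifying $a^{n-1}b^{n-1}ab$. Moving the interior $a$ leftwards past $b^{n-1}$ by the formula for $b^ma$, pulling the central $C_3$-power out, and absorbing the resulting $C_1^{n-1}$ into $a^{n-1}$ by the formula for $a^jC_1^k$ yields $a^{n-1}b^{n-1}ab=C_3^{\binom{n-1}2}C_1^{n-1}C_2^{(n-1)^2}a^nb^n$.

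Finally I would multiply by the inductive prefactor $C_3^{\binom{n-1}3}C_2^{2\binom{n-1}3+\binom{n-1}2}C_1^{\binom{n-1}2}$ and collect the commuting powers of $C_1,C_2,C_3$. The main obstacle is purely arithmetical: one must check that the accumulated exponents telescope across the Pascal step. For $C_1$ and $C_3$ this is immediate from $\binom{n-1}2+(n-1)=\binom n2$ and $\binom{n-1}3+\binom{n-1}2=\binom n3$; the only delicate identity is the one for $C_2$, namely $2\binom{n-1}3+\binom{n-1}2+(n-1)^2=2\binom n3+\binom n2$, which follows from $2\bigl(\binom n3-\binom{n-1}3\bigr)=2\binom{n-1}2=(n-1)(n-2)$ together with $(n-1)(n-2)+(n-1)=(n-1)^2$. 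This produces exactly the claimed expression and closes the induction.
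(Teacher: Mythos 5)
Your proof is correct and takes essentially the same route as the paper: an induction on $n$ driven by commutator collection, resting on exactly the same ingredients — the relations $[b,a]=C_1$, $[a,C_1]=C_2$, $[b,C_1]=C_3$ coming from \eqref{eq:1.2.5}, centrality of $C_2,C_3$ via Lemma \ref{L:3}(ii), and the Pascal-rule bookkeeping at the end. The only differences are cosmetic: you peel the factor off the right ($(ab)^n=(ab)^{n-1}(ab)$) where the paper peels from the left, and you derive the collection identities in-line inside $\langle a,b\rangle$ where the paper invokes Lemma \ref{L:4}(iii),(iv) and Lemma \ref{L:6}.
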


\begin{proof}
We proceed by induction on $n$. The claim holds for $n = 1$. 
Now we will prove the claim for $n$. Towards that, writing $((g_1 \wedge h_1)(g_2 \wedge h_2))^n$ as $((g_1 \wedge h_1)(g_2 \wedge h_2))((g_1 \wedge h_1)(g_2 \wedge h_2))^{n-1}$ and applying induction hypothesis, we obtain
\begin{align*}
((g_1 \wedge h_1)(g_2 \wedge h_2))^n =& (g_1 \wedge h_1)(g_2 \wedge h_2)([g_2,h_2] \wedge [[g_2,h_2],[g_1,h_1]])^{{n-1}\choose{3}}\\&([g_1,h_1]\wedge [[g_2, h_2], [g_1,h_1]])^{2{{n-1}\choose{3}}+{{n-1}\choose{2}}}([g_2,h_2] \wedge [g_1,h_1])^{{n-1}\choose{2}}\\
& (g_1 \wedge h_1)^{n-1}(g_2\wedge h_2)^{n-1}.
\end{align*}
Note that the third and fourth terms in the above equation are central by Lemma \ref{L:3} ($ii$). Therefore, we can commute the second and first terms with them. Now by using Lemma \ref{L:4} ($iii$) to $(g_2\wedge h_2)([g_2,h_2] \wedge [g_1,h_1])^{{n-1}\choose{2}}$, we have

\begin{align*}
((g_1 \wedge h_1)(g_2 \wedge h_2))^n =&\ ([g_2,h_2] \wedge [[g_2,h_2],[g_1,h_1]])^{{n-1}\choose{3}}\\&\ ([g_1,h_1]\wedge [g_2, h_2], [g_1,h_1]])^{2{{n-1}\choose{3}}+{{n-1}\choose{2}}}\\&\ (g_1 \wedge h_1)([g_2,h_2] \wedge [[g_2,h_2],[g_1,h_1]])^{{n-1}\choose{2}}\\&\ ([g_2,h_2] \wedge [g_1,h_1])^{{n-1}\choose{2}}(g_2 \wedge h_2)(g_1 \wedge h_1)^{n-1}(g_2\wedge h_2)^{n-1}.
\end{align*}
Applying Lemma \ref{L:6} to $(g_2 \wedge h_2)(g_1 \wedge h_1)^{n-1}$ and combining powers of similar terms, we obtain
\begin{align*}
((g_1 \wedge h_1)(g_2 \wedge h_2))^n =&\ ([g_2,h_2] \wedge [[g_2, h_2],[g_1,h_1]])^{{{n}\choose{3}}}\\&\ ([g_1,h_1]\wedge [[g_2, h_2], [g_1,h_1]])^{2{{n}\choose{3}}}(g_1 \wedge h_1)\\&\ ([g_2,h_2] \wedge [g_1,h_1])^{{{n}\choose{2}}}(g_1 \wedge h_1)^{n-1}(g_2\wedge h_2)^{n}.
\end{align*}
Now by using Lemma \ref{L:4} ($iv$) to $(g_1\wedge h_1)([g_2, h_2]\wedge [g_1, h_1])^{{n}\choose{2}}$ and combining powers of similar terms  the result follows.

\end{proof}

The next lemma is used to prove the main theorem for a $3$-group of class 5.

\begin{lemma}\label{L:7}
Let $G$ be a group of nilpotency class $5$. Then the following hold for $g,g_1,g_2 ,h ,h_1,h_2\in G$:
\begin{itemize}
\item[(i)] $[g^3,h] = [[g,h],[g,g,h]][g,g,g,h][g,g,h]^3[g,h]^3$. In particular, $[g,h^3] = [g^3,h] = [g,h]^3$, whenever $w(g), w(h) \geq 2$.
\item[(ii)] $g^3 \wedge [g^3,h] = (g \wedge [[g,h],[g,g,h]])^3([g,g,h] \wedge [g^3,[g,h]^3])^3([g,h]\wedge [g^3, [g, h]])^3 (g \wedge [g, g, g, g, h])^3(g \wedge [g,g,g,h])^3(g^3 \wedge [g,g,h])^3(g^3 \wedge [g,h])^3$.
\item[(iii)] $[g^3,[g,h]^3] = [g^3,g,h]^3[[g,h],[g^3,g,h]]^3$.
\item[(iv)]$ [g^3,g^3,h] = [[g,h],[g^3,g,h]]^3[g,g,g,g,h]^3[g^3,g,g,h]^3[g^3,g,h]^3.$
\item[(v)] $[g^3,g^3,g^3,h] = [g^3,g^3,g,g,h]^3[g^3,g^3,g,h]^3.$
\item[(vi)] $[[g_2^3,h_2],[g_1^3,h_1]] = [[g_2,h_2],[g_1,g_1,h_1]]^9[[g_2,g_2,h_2],[g_1,h_1]]^9[[g_2,h_2],[g_1,h_1]]^9$.
\item[(vii)] $[g^3,g,g,h] = [g,g,g,g,h]^3[g,g,g,h]^3$.
\item[(viii)]$[g^3,g,h] = [g,g,g,g,h][g,g,g,h]^3[g,g,h]^3$.
\end{itemize}
\end{lemma}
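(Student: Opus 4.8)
The plan is to prove each of the eight identities by repeated application of the standard commutator collection rules (\ref{eq:1.11.1})--(\ref{eq:1.11.4}) from Lemma \ref{L:10}, together with the class-$5$ collapse that any commutator of weight $\geq 6$ is trivial. Since $G$ has class $5$, all weight computations can be tracked explicitly: a commutator $[x,y]$ with $w(x)+w(y)\geq 6$ vanishes, and this is what makes each collection process terminate after finitely many correction terms. The binomial coefficient $\binom{3}{2}=3$ and $\binom{3}{3}=1$ will appear repeatedly, so the numerical exponents $3$ and $9=3^2$ in the statements come directly from specializing $n=3$ in generic collection formulas.

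First I would establish $(i)$, which is the engine for everything else. To get $[g^3,h]$ I write $g^3=g\cdot g^2$ and expand using (\ref{eq:1.11.1}) twice, collecting the conjugation actions via (\ref{eq:1.11.4}) and discarding all weight-$\geq 6$ terms; the surviving terms are exactly $[g,h]^3$, $[g,g,h]^3$, $[g,g,g,h]$, and the single correction $[[g,h],[g,g,h]]$. The ``in particular'' clause is immediate: if $w(g),w(h)\geq 2$ then $[g,g,h]$ has weight $\geq 5$, $[g,g,g,h]$ has weight $\geq 6$ hence vanishes, and the correction term has weight $\geq 7$, leaving only $[g,h]^3$; the symmetric statement for $[g,h^3]$ follows by the analogous expansion using (\ref{eq:1.11.2}). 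I would then prove $(iii)$, $(iv)$, $(v)$, $(vii)$, $(viii)$ in the same style: each is obtained by feeding the output of $(i)$ (or of an already-proved part) back into a collection expansion. For instance $(viii)$ computes $[g^3,g,h]$ by substituting the expression for $[g^3,h]$ from $(i)$ into the outer bracket and expanding via (\ref{eq:1.11.1}); then $(vii)$ and $(iv)$ iterate this once more, and $(v)$ once more again, with the weight bound forcing progressively more terms to die. Part $(vi)$ expands $[g_i^3,h_i]$ by $(i)$ inside a double bracket and uses bilinearity of the commutator modulo higher weight, producing the uniform exponent $9$.

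Part $(ii)$ is the genuinely tensor-theoretic identity and I expect it to be the main obstacle. Here one must expand $g^3\wedge[g^3,h]$ rather than an ordinary commutator, so the tools are the exterior-square relations (\ref{eq:1.1.2}), (\ref{eq:1.2.2}), Lemma \ref{L:3}, and Theorem \ref{T:1}, in addition to the commutator identities. My plan is to first substitute the expression for $[g^3,h]$ from $(i)$ into the second slot, then distribute over the product using (\ref{eq:1.11.5}), using Lemma \ref{L:3}$(ii)$ to kill the conjugating actions that appear (all arising commutators have enough weight to be central or trivial). The power-three terms in the statement strongly suggest applying Lemma \ref{L:4}$(i)$ in reverse to pull cubes inside and outside the wedge when the second entry has weight $\geq 3$, and Theorem \ref{T:1} to expand the $g^3\wedge[g,h]$ and $g^3\wedge[g,g,h]$ pieces into the displayed $\binom{3}{k}$-weighted factors. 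The bookkeeping of which correction terms survive — and verifying they assemble into exactly the seven factors displayed, with the correct ordering so that Lemma \ref{L:3}$(ii)$ centrality justifies each rearrangement — is where the real care is needed; the danger is a miscounted binomial exponent or a dropped weight-$6$ term that should have survived as a weight-exactly-$6$ contribution.

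Throughout, the strategy is purely computational with no conceptual leap: every step is a rewrite sanctioned by an identity already in the excerpt, and the class-$5$ hypothesis guarantees termination. I would organize the write-up so that $(i)$ is proved in full detail, the ordinary-commutator parts $(iii)$--$(viii)$ are proved by citing $(i)$ and the relevant expansion rule with the weight truncation made explicit, and $(ii)$ receives the most detailed treatment since it combines the commutator collection with the exterior-square relations and Theorem \ref{T:1}.
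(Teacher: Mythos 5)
Your treatment of parts (i) and (iii)--(viii) follows essentially the same route as the paper: expand by the collection identities of Lemma \ref{L:10}, truncate by weight using the class-$5$ hypothesis, and feed part (i) back into the later parts (the paper indeed derives (vii) and (viii) directly from (i), and obtains (iv), (v), (vi) by substituting earlier parts into the outer bracket and expanding via \eqref{eq:1.11.5} and \eqref{eq:1.11.6}). One slip there: in the ``in particular'' clause of (i) you assert that $[g,g,h]$ has weight $\geq 5$ when $w(g),w(h)\geq 2$; the correct count is $w([g,g,h])\geq 2+2+2=6$, and you need $\geq 6$, not $\geq 5$, for this term to vanish in a group of class $5$, so your stated bound does not justify dropping $[g,g,h]^3$ (though the fact itself is true).

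The genuine gap is in part (ii), which you correctly single out as the crux. Your plan is to substitute (i) into the second slot, expand by \eqref{eq:1.11.5}, and then ``kill the conjugating actions'' using Lemma \ref{L:3}(ii), asserting that all arising commutators have enough weight. They do not: Lemma \ref{L:3}(ii) requires total weight $\geq 7$, and the action of $[g,g,h]^3$ (weight $3$) on $g^3\wedge[g,h]^3$ (weights $1$ and $2$) has total weight $6$, so it survives. This surviving action is precisely the source of the factor $([g,g,h]\wedge[g^3,[g,h]^3])^3$ in the identity you are trying to prove: the paper converts it using Proposition \ref{P:2}(iii),
\begin{equation*}
{}^{[g,g,h]^3}\bigl(g^3\wedge[g,h]^3\bigr)=\bigl([g,g,h]^3\wedge[g^3,[g,h]^3]\bigr)\bigl(g^3\wedge[g,h]^3\bigr),
\end{equation*}
and then pulls the cube out of the first slot by Lemma \ref{L:4}(i). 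Under your plan that factor can never appear, so the computation cannot close to the stated identity. A secondary misdirection: Theorem \ref{T:1} is not needed for $g^3\wedge[g,h]$ and $g^3\wedge[g,g,h]$ (those occur unexpanded in the statement); it is needed for the high-weight pieces $g^3\wedge[[g,h],[g,g,h]]$ and $g^3\wedge[g,g,g,h]$, which yield $(g\wedge[[g,h],[g,g,h]])^3$ and $(g\wedge[g,g,g,g,h])^3(g\wedge[g,g,g,h])^3$ respectively, while $g^3\wedge[g,h]^3$ must be handled by Lemma \ref{L:4}(ii) --- this is what supplies the remaining factor $([g,h]\wedge[g^3,[g,h]])^3$ --- and $g^3\wedge[g,g,h]^3$ by Lemma \ref{L:4}(i).
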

\begin{proof}
\begin{itemize}
\item[($i$)] 
Expanding using \eqref{eq:1.11.1} twice we have, $[g^3,h] =\ ^{g^2} [g,h] ^g [g,h] [g,h]$. Applying \eqref{eq:1.11.4} on the first two terms, we obtain $[g^3,h] = [g^2,g,h][g,h]$\\ $[g,g,h][g,h]^2.$ Now expand the first term using \eqref{eq:1.11.1} and flip second and third terms using \eqref{eq:1.11.3}. Note that $[[g,h],[g,g,h]]$ is central. Again applying \eqref{eq:1.11.4} on the first term and further by collecting similar terms we arrive at the desired result.

\item[($ii$)] Applying (i) and expanding using \eqref{eq:1.11.5}, we have
\begin{align*}
g^3 \wedge [g^3,h] =&\ (g^3 \wedge [[g,h],[g,g,h]])^{[[g,h],[g,g,h]]}(g^3 \wedge [g,g,g,h])\\&\ ^{[[g,h],[g,g,h]][g,g,g,h]}(g^3 \wedge [g,g,h]^3)\\&\ ^{[[g,h],[g,g,h]][g,g,g,h][g,g,h]^3}(g^3 \wedge [g,h]^3).
\end{align*}
Now applying Lemma \ref{L:3} ($ii$) to the last three terms, we obtain
\begin{align*}
g^3 \wedge [g^3,h] =&\ (g^3 \wedge [[g,h],[g,g,h]])(g^3 \wedge [g,g,g,h])\\&(g^3 \wedge [g,g,h]^3)^{[g,g,h]^3}(g^3 \wedge [g,h]^3)\\
=&\ (g^3 \wedge [[g,h],[g,g,h]])(g^3 \wedge [g,g,g,h])\\&(g^3 \wedge [g,g,h]^3)([g,g,h]^3 \wedge [g^3,[g,h]^3])(g^3 \wedge [g,h]^3),\\ \mbox{by\ Proposition}\ \ref{P:2}\ (iii).
\end{align*}

Apply Theorem \ref{T:1} to the first two terms. Further applying Lemma \ref{L:4} ($i$) to $(g^3 \wedge [g,g,h]^3)$, $([g,g,h]^3 \wedge [g^3,[g,h]^3])$ and Lemma \ref{L:4} ($ii$) to the last term respectively yields
\begin{align*}
g^3 \wedge [g^3,h] =&\  (g \wedge [[g,h],[g,g,h]])^3(g \wedge [g,g,g,g,h])^3(g \wedge [g,g,g,h])^3\\&(g^3 \wedge [g,g,h])^3([g,g,h] \wedge [g^3,[g,h]^3])^3([g,h]\wedge [g^3, [g,h]])^3\\& (g^3 \wedge [g,h])^3.
\end{align*}
Note that all these terms commute with each other by Lemma \ref{L:3} ($ii$) and rearranging these terms gives the result. 
\item[($iii$)] 
Expand $[g^3,[g,h]^3]$ using \eqref{eq:1.11.2} twice, to obtain $[g^3,[g,h]^3]= [g^3,[g,h]]\\^{[g,h]}[g^3,[g,h]]^{[g,h]^2}[g^3,[g,h]]$. Now apply \eqref{eq:1.11.4} on the last two terms and then further expand $[[g,h]^2,[g^3,[g,h]]]$ using \eqref{eq:1.11.1} considering $[g^3,[g,h]]$ as $h$. Noting that the action becomes trivial by Lemma \ref{L:5} ($i$) and then collecting similar terms, we arrive at the result we seek.
\item[($iv$)]
Apply ($i$) to $[g^3,h]$ in $[g^3,[g^3,h]]$ and then expand using \eqref{eq:1.11.5}. Observe that the first term vanishes and the actions become trivial by using Lemma \ref{L:5} ($i$). Now apply Lemma  \ref{L:5} ($iii$),($ii$) to the first term and second term respectively to see that the power $3$ comes out of the commutators. Further applying ($iii$) to the last term gives the result.

\item[($v$)]
Apply ($iv$) to $[g^3,g^3,h]$ in $[g^3,[g^3,g^3,h]]$ and then expand using \eqref{eq:1.11.5}. Note that the first two terms vanish and the actions become trivial by using Lemma \ref{L:5} ($i$). Further apply  Lemma \ref{L:5} ($ii$) to the first and second term to see that the power $3$ comes out of the commutators to give the desired result.

\item[($vi$)] 
Apply ($i$) to $[g_1^3,h_1]$ in $[[g_2^3,h_2],[g_1^3,h_1]]$ and then expand using \eqref{eq:1.11.5}. Observe that the first two terms vanish and the actions become trivial by using Lemma \ref{L:5} ($i$). Further apply ($i$) to the remaining terms to see that the power $3$ on $[g_1,g_1,h_1]$ and $[g_1,h_1]$ comes out of their corresponding commutators. Now apply ($i$) to $[g_2^3, h_2]$ inside both the commutators and expand both the terms using \eqref{eq:1.11.6}. Note that the terms of weight greater than $5$ becomes trivial. Now applying ($i$) to each of the terms in the resulting expression gives the result we seek. 
\item[$(vii)$] and $(viii)$ follows from ($i$).
\end{itemize}
\end{proof}

Before proceeding to the next result, let us look at the following combinatorial properties:

 $(i)\ 3^{n} \mid {3^{n}\choose{m}}$, where $(3,m) =1$.

$(ii)\ 3^{n-1} \mid {3^{n}\choose{3}}$.

The next lemma gives bounds on the exponent of some specific elements of $G\wedge G$.

\begin{lemma}\label{L:8}
Let $G$ be a $3$-group of class less than or equal to $5$ and of exponent $3^n$. Then the following hold for $g,h,g_1,g_2,h_1,h_2 \in G$:
\begin{itemize}
\item[(i)] $(g^3 \wedge [g^3,h])^{{3^{n-1}}\choose{2}} = 1$.
\item[(ii)] $(g^3 \wedge [g^3,g^3,h])^{{{3^{n-1}}}\choose{3}} = 1$.
\item[(iii)] $ (g^3 \wedge [g^3,g^3,g^3,h])^{{{3^{n-1}}}\choose{4}} = 1$.
\item[(iv)] $ (g^3 \wedge [g^3,g^3,g^3,g^3,h])^{{{3^{n-1}}}\choose{5}} = 1$.
\item[(v)] $([g^3,h] \wedge [g^3,g^3,h])^{{{3^{n-1}}}\choose{3}} = 1$.
\item[(vi)] $([g^3,h] \wedge [g^3,g^3,g^3,h])^{{3^{n-1}}\choose{4}} = 1$.
\item[(vii)]$([g_2^3,h_2] \wedge [g_1^3,h_1])^{{3^{n-1}}\choose{2}} = 1$.
\item[(viii)] $([g_1^3,h_1]\wedge [[g_2^3, h_2], [g_1^3,h_1]])^{2{{3^{n-1}}\choose{3}}+{{3^{n-1}}\choose{2}}} = 1$.
\item[(ix)] $([g_2^3,h_2] \wedge [[g_2^3,h_2],[g_1^3,h_1]])^{{3^{n-1}}\choose{3}} = 1.$
\end{itemize}
\end{lemma}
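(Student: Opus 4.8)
The plan is to treat all nine parts by a single scheme: reduce each claim to an order estimate for the base exterior element and then match that estimate against the $3$-adic valuation of the accompanying binomial coefficient. Writing $v_3$ for the $3$-adic valuation, the two combinatorial properties recorded just above the lemma give $v_3\!\left(\binom{3^{n-1}}{m}\right)=n-1$ whenever $(3,m)=1$ (hence for $m=2,4,5$) and $v_3\!\left(\binom{3^{n-1}}{3}\right)=n-2$. Consequently parts $(i)$, $(iii)$, $(iv)$, $(vi)$, $(vii)$ follow once the relevant base element is shown to have order dividing $3^{n-1}$, while parts $(ii)$, $(v)$, $(viii)$, $(ix)$ — whose exponents all carry $v_3=n-2$ — demand the sharper bound, order dividing $3^{n-2}$.

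The engine for the order estimates is Lemma \ref{L:7}. For each part I would first rewrite the inner commutator built from $g^3$ (or $g_i^3$) as a product of third powers via Lemma \ref{L:7}$(i)$, $(ii)$, $(iv)$, $(v)$, $(vii)$, $(viii)$, or, in the case of the iterated commutator $[[g_2^3,h_2],[g_1^3,h_1]]$, as a product of ninth powers via Lemma \ref{L:7}$(vi)$. Since $G$ has exponent $3^n$, every weighted element has order dividing $3^n$, so a third power has order dividing $3^{n-1}$ and a ninth power order dividing $3^{n-2}$. Whenever the second entry of the exterior product has weight at least $3$, Lemma \ref{L:4}$(i)$ lets me transfer the outer exponent inside, turning $(g^3\wedge b)^{3^{n-1}}$ into $g^3\wedge b^{3^{n-1}}$, and the cube (resp.\ ninth-power) structure of $b$ then annihilates it. All cross terms that arise commute by Lemma \ref{L:3}$(ii)$, so powers distribute freely, and any term of total weight $\geq 7$ dies by Lemma \ref{L:3}$(i)$. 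In this way parts $(iii)$, $(iv)$, $(vi)$, $(viii)$, $(ix)$ fall out cleanly, and $(i)$ is immediate from the product-of-cubes formula Lemma \ref{L:7}$(ii)$ together with the remark that each cube base, such as $g^3\wedge[g,h]$, has order dividing $3^n$ (verified by one application of Theorem \ref{T:1} with base $g$ and $N=3$).

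The delicate cases are $(ii)$ and $(v)$, with $(vii)$ requiring only heavier bookkeeping. After applying Lemma \ref{L:4}$(i)$ and Lemma \ref{L:7}$(iv)$ to $b=[g^3,g^3,h]$, a residue $b^{3^{n-2}}$ survives in $\gamma_5(G)$ that the crude estimate does not kill. For $(v)$, where the complementary entry $[g^3,h]$ has weight $2$, this residue is harmless: $[g^3,h]\wedge(\gamma_5(G))$ has total weight $\geq 7$ and hence vanishes by Lemma \ref{L:3}$(i)$. For $(ii)$ the complementary entry is $g^3$, giving only total weight $6$, so weight considerations alone are insufficient; here I would invoke the identity $g^3\wedge z=(g\wedge z)^3$ for $z\in\gamma_5(G)$, which follows from Theorem \ref{T:1} with base $g$ and $N=3$ once one checks that every correction term there has weight $\geq 7$ and so disappears. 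This extra factor of $3$, combined with the $3^{n-1}$ already present, produces a $3^n$ and kills the residue, dropping its order from $3^{n-1}$ to $3^{n-2}$. For $(vii)$ neither entry has weight $\geq 3$, so I would instead expand both $[g_i^3,h_i]$ by Lemma \ref{L:7}$(i)$, extract the cubes with Lemma \ref{L:4}$(ii)$ and the bilinearity identities of Lemma \ref{L:10}$(v)$, $(vi)$, and discard the mixed terms by weight. I expect the residual analysis in $(ii)$ to be the main obstacle, since it is precisely the point where the naive order bound is off by one power of $3$ and must be recovered from the finer commutator-collection identities rather than from weight alone.
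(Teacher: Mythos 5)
Your proposal is correct and takes essentially the same route as the paper's proof: expand the heavy entry via Lemma \ref{L:7} into commuting cubes (ninth powers from Lemma \ref{L:7}(vi)), kill or commute cross terms by weight via Lemma \ref{L:3}, move powers through the exterior via Lemma \ref{L:4}, and match the accumulated powers of $3$ against the $3$-adic valuations of the binomial coefficients; even your handling of the delicate part (ii) --- the central residue in $\gamma_5(G)$ killed by $g^3\wedge z=(g\wedge z)^3$ --- is precisely the paper's step where ``the power $3$ can be taken outside'' $g^3\wedge[g,g,g,g,h]$. One minor repair: in part (i), Theorem \ref{T:1} with $n=3$ still leaves the factor $(g\wedge[g,h])^3$, whose second entry has weight $2$, so bounding the order of $g^3\wedge[g,h]$ requires the Lemma \ref{L:4}(ii) manipulation (exactly as the paper does for that term) rather than Theorem \ref{T:1} alone --- a fix already available in your toolkit, since you invoke Lemma \ref{L:4}(ii) for part (vii).
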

\begin{proof}
\begin{itemize}
\item[($i$)]  From Lemma \ref{L:7} ($ii$) we have,
\begin{align*}
g^3 \wedge [g^3,h] =& (g \wedge [[g,h],[g,g,h]])^3([g,g,h] \wedge [g^3,[g,h]^3])^3([g,h] \wedge [g^3,[g,h]])^3\\& (g \wedge [g,g,g,g,h])^3(g \wedge [g,g,g,h])^3(g^3 \wedge [g,g,h])^3(g^3 \wedge [g,h])^3.
\end{align*}
Note that all the terms in the above expression commute with one another by Lemma \ref{L:3} ($ii$). Therefore
\begin{align*}
(g^3 \wedge [g^3,h])^{3^{n-1}(\frac{3^{n-1}-1}{2})} =&\ \{(g \wedge [[g,h],[g,g,h]])^{3^n}([g,h] \wedge [g^3,[g,h]])^{3^n}\\
&\ ([g,g,h] \wedge [g^3,[g,h]^3])^{3^n}(g \wedge [g,g,g,g,h])^{3^n}\\
&\ (g \wedge [g,g,g,h])^{3^n}(g^3 \wedge [g,g,h])^{3^n}(g^3 \wedge [g,h])^{3^n}\}^{\frac{3^{n-1}-1}{2}}.
\end{align*}
Now applying Lemma \ref{L:4} ($ii$) to the last term and  Lemma \ref{L:4} ($i$) to all the other terms in the above expansion, we can see that $(g^3 \wedge [g^3,h])^{{3^{n-1}}\choose{2}}$ is trivial.

\item[($ii$)] Apply Lemma \ref{L:7} ($iv$) to $g^3 \wedge [g^3,g^3,h]$ and then expand using \eqref{eq:1.11.5}. Note that the actions become trivial by Lemma (\ref{L:3}) ($ii$). Now applying Lemma (\ref{L:4}) ($i$) to each of the term yields,
\begin{align*}
g^3 \wedge [g^3,g^3,h] =& (g^3 \wedge [[g,h],[g^3,g,h]])^3(g^3 \wedge [g,g,g,g,h])^3(g^3 \wedge [g^3,g,g,h])^3\\ & (g^3 \wedge [g^3,g,h])^3.
\end{align*}
Since $[[g,h],[g^3,g,h]]$ is central, we can take the power $3$ outside. Now applying Lemma (\ref{L:7}) ($vii$),($viii$) to the last two terms respectively, we obtain

\begin{align*}
g^3 \wedge [g^3,g^3,h] =&\  (g \wedge [[g,h],[g^3,g,h]])^9(g^3 \wedge [g,g,g,g,h])^3(g^3 \wedge [g,g,g,g,h]^3\\&\ [g,g,g,h]^3)^3(g^3 \wedge [g,g,g,g,h][g,g,g,h]^3[g,g,h]^3)^3.
\end{align*}
Expand the last two terms using \eqref{eq:1.11.5} and note that the actions that arise are trivial by Lemma (\ref{L:3}) ($ii$). Now taking out powers using Lemma (\ref{L:4}) ($i$) yields
\begin{align*}
g^3 \wedge [g^3,g^3,h] =&\ (g \wedge [[g,h],[g^3,g,h]])^9(g^3 \wedge [g,g,g,g,h])^3\{(g^3 \wedge [g,g,g,g,h])^3\\&\ (g^3 \wedge [g,g,g,h])^3\}^3\{(g^3 \wedge [g,g,g,g,h])(g^3\wedge [g,g,g,h])^3\\&\ (g^3 \wedge [g,g,h])^3\}^3.
\end{align*}
Now since all the terms commute with each other by Lemma \ref{L:3} ($ii$), we have
\begin{align*}
g^3 \wedge [g^3,g^3,h] =&\ (g \wedge [[g,h],[g^3,g,h]])^9(g^3 \wedge [g,g,g,g,h])^{15}(g^3 \wedge [g,g,g,h])^{18}\\&\ (g^3 \wedge [g,g,h])^9.
\end{align*}
As $[g,g,g,g,h]$ is central, the power $3$ can be taken outside $g^3 \wedge [g,g,g,g,h]$.
Since $3^{n-2} \mid {{3^{n-1}}\choose{3}}$, we have ${{3^{n-1}}\choose{3}} =\ 3^{n-2}k_0$, for some integer $k_0$.
Therefore
\begin{align*}
(g^3 \wedge [g^3,g^3,h])^{{3^{n-1}}\choose{3}} =&\ \{(g \wedge [[g,h],[g^3,g,h]])^{3^n}(g \wedge [g,g,g,g,h])^{3^n\times 5}\\&\ (g^3 \wedge [g,g,g,h])^{3^n \times 2}(g^3 \wedge [g,g,h])^{3^n}\}^{k_0}.
\end{align*}
Now the result follows by applying Lemma \ref{L:4} ($i$) to each of the terms.
\item[($iii$)] By Theorem \ref{T:1} and using Lemma \ref{L:3} ($i$), we have
\begin{align*}
(g^3 \wedge [g^3,g^3,g^3,h])^{{3^{n-1}}\choose{4}} =&\ \{(g \wedge [g,g^3,g^3,g^3,h])^3(g \wedge [g^3,g^3,g^3,h])^3\}^{{3^{n-1}}\choose{4}}.
\end{align*}
Since $3^{n-1} \mid {{3^{n-1}}\choose{4}}$, we have ${{3^{n-1}}\choose{4}} =\ 3^{n-1}k_1$, for some integer $k_1$. Then,
\begin{align*}
(g^3 \wedge [g^3,g^3,g^3,h])^{{3^{n-1}}\choose{4}} =&\ \{(g \wedge [g,g^3,g^3,g^3,h])^{3^n}(g \wedge [g^3,g^3,g^3,h])^{3^n}\}^{k_1}.
\end{align*}
Now applying Lemma \ref{L:4} ($i$), we can see that $3^n$ can be taken inside. Hence $(g^3 \wedge [g^3,g^3,g^3,h])^{{3^{n-1}}\choose{4}}$ becomes trivial.

\item[($iv$)] Consider $(g^3 \wedge [g^3,g^3,g^3,g^3,h])^{{{3^{n-1}}}\choose{5}}$. As  $\ 3^{n-1} \mid {{{3^{n-1}}\choose{5}}}$ and $[g^3,g^3,g^3,g^3,h]$ is central, the power $3^{n-1}$ can be taken inside the exterior, making it trivial.

\item[($v$)]Apply Lemma \ref{L:7} ($iv$) to $([g^3,h] \wedge [ g^3,g^3,h])$ and expand using \eqref{eq:1.1.2} three times. The first two terms vanish and the actions become trivial by Lemma \ref{L:3} ($i$) and ($ii$) respectively. Now by Lemma \ref{L:4} ($i$) we have,
\begin{align*}
[g^3,h] \wedge [ g^3,g^3,h] =& \ ([g^3,h] \wedge [g^3,g,g,h])^3([g^3,h] \wedge [g^3,g,h])^3.
\end{align*}
Apply Lemma \ref{L:7} ($vii$),($viii$) to the first and second terms respectively. Now expand using \eqref{eq:1.1.2} once for the first term and twice for the second term  as in \eqref{eq:1.11.5}. Note that the actions vanish by Lemma (\ref{L:3}) ($ii$). Since all the terms commute, the power $3$ on the product of first two terms and last three terms can be distributed uniformly to all the terms. Now the first and third terms become trivial by Lemma (\ref{L:3}) ($i$) and after combining similar terms, we obtain
\begin{align*}
[g^3,h] \wedge [ g^3,g^3,h] =&\ ([g^3,h] \wedge [g,g,g,h]^3)^6 ([g^3,h] \wedge [g,g,h]^3)^3.
\end{align*}
Now by Lemma (\ref{L:4}) ($i$), the power $3$ can be taken out from both these terms to yield us
\begin{align*}
[g^3,h] \wedge [ g^3,g^3,h] =& ([g^3,h] \wedge [g,g,g,h])^{18}([g^3,h] \wedge [g,g,h])^9.
\end{align*}

Since $3^{n-2} \mid {{3^{n-1}}\choose{3}}$, we have ${{3^{n-1}}\choose{3}} = 3^{n-2}k_2$, for some integer $k_2$. Then,
\begin{align*}
([g^3,h] \wedge [ g^3,g^3,h])^{{3^{n-1}}\choose{3}} =& \{([g^3,h] \wedge [g,g,g,h])^{18}([g^3,h] \wedge [g,g,h])^9\}^{3^{n-2}k_2}\\
=& \{([g^3,h] \wedge [g,g,g,h])^{2\times 3^n}([g^3,h] \wedge [g,g,h])^{3^{n}}\}^{k_2}
\end{align*}
Now by applying Lemma \ref{L:4} ($i$) on both the terms, we obtain the desired result.

\item[($vi$)] Apply Lemma \ref{L:7} ($v$) to $([g^3,h] \wedge [g^3,g^3,g^3,h])$ and expand using \eqref{eq:1.1.2}. Then noting that the action becomes trivial by Lemma \ref{L:3} ($ii$), yields
\begin{align*}
([g^3,h] \wedge [g^3,g^3,g^3,h]) =&\ ([g^3,h] \wedge [g^3,g^3,g,g,h]^3) ([g^3,h] \wedge [g^3,g^3,g,h]^3).
\end{align*}
Observe that, first term becomes trivial by Lemma \ref{L:3} ($i$). Further using Lemma \ref{L:4} ($i$), we obtain $([g^3,h] \wedge [g^3,g^3,g,h]^3) =\ ([g^3,h] \wedge [g^3,g^3,g^3,h])$. Therefore $([g^3,h] \wedge [g^3,g^3,g^3,h])^{{3^{n-1}}\choose{4}} =\ ([g^3,h] \wedge [g^3,g^3,g,h])^{3^n k_1}$ becomes trivial after applying Lemma \ref{L:4} ($i$) as in $(iii)$.

\item[($vii$)] Apply Lemma \ref{L:7} ($i$) to $[g_1^3,h_1]$ in $[g_2^3,h_2] \wedge [g_1^3,h_1]$ and then expand using \eqref{eq:1.11.5}. The first term vanishes and the actions become trivial by Lemma \ref{L:3} ($i$),($ii$) respectively. Again apply Lemma \ref{L:7} ($i$) to $[g_2^3,h_2]$ in $([g_2^3,h_2] \wedge [g_1,g_1,g_1,h_1])$ and then expand using \eqref{eq:1.11.6}. Note that the second, third and fouth terms vanish and the action on the first term becomes trivial by Lemma (\ref{L:3}) ($i$),($ii$) respectively. We are left with the following three terms,
\begin{align*}
[g_2^3,h_2] \wedge [g_1^3,h_1] =&\ ([g_2^3,h_2] \wedge [g_1,g_1,g_1,h_1]) ([g_2^3,h_2]\wedge [g_1,g_1,h_1]^3)\\&\ ([g_2^3,h_2]\wedge [g_1,h_1]^3).
\end{align*}
Since $[[g_2,h_2]^3,[g_1,g_1,g_1,h_1]] =1$, the power $3$ can be taken outside the first term. Also by Lemma \ref{L:4} ($i$), the power can be taken ouside the second term. Now applying Lemma \ref{L:4} ($ii$) to the last term, we obtain
\begin{align*}
[g_2^3,h_2] \wedge [g_1^3,h_1] =&\ ([g_2,h_2] \wedge [g_1,g_1,g_1,h_1])^3 ([g_2^3,h_2]\wedge [g_1,g_1,h_1])^3\\&\ ([g_2^3,h_2]\wedge [g_1,h_1])^3([g_1,h_1]\wedge [[g_2^3,h_2],[g_1,h_1]])^3.
\end{align*}

Note that these terms commute with each other by Lemma \ref{L:3} ($ii$). Therefore,
$([g_2^3,h_2] \wedge [g_1^3,h_1])^{{3^{n-1}}\choose{2}} = 1$, by applying Lemma \ref{L:4} ($ii$) for the third term and ($i$) for the rest of the terms in the above expression.
\end{itemize}

\item[($viii$)] Since $3^{n-2} \mid 2{{3^{n-1}}\choose{3}} +  {{3^{n-1}}\choose{2}}$, we have $2{{3^{n-1}}\choose{3}} +  {{3^{n-1}}\choose{2}} = 3^{n-2}k_3$ for some integer $k_3$. 

From Lemma \ref{L:7} ($vi$), we obtain

\begin{align*}
([g_1^3,h_1]\wedge [[g_2^3, h_2], [g_1^3,h_1]])^{2{{3^{n-1}}\choose{3}} +  {{{3^{n-1}}}\choose{2}}} =& ([g_1^3, h_1]\wedge [[g_2,h_2],[g_1,g_1,h_1]]^9\\&[[g_2,g_2,h_2],[g_1,h_1]]^9 [[g_2,h_2],[g_1,h_1]]^9)^{3^{n-2}k_3}.
\end{align*}
Because the commutators in the exterior commute, the power $9$ can be taken outside and the result follows.
Similarly ($ix$) can be proved.

\end{proof}

The next lemma provides a bound on the exponent of $G^3\wedge G$.

\begin{lemma}\label{L:9}
Let $G$ be a $3$-group of class less than or equal to $5$ and exponent $3^n$. Then the exponent of the image of $G^3 \wedge G$ in $G \wedge G$ divides $3^{n-1}$.
\end{lemma}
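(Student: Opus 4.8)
The plan is to show that the image of $G^3 \wedge G$ in $G \wedge G$ is generated by elements of the form $g^3 \wedge h$ (for $g, h \in G$), and that each such generator has order dividing $3^{n-1}$. The subgroup $G^3$ is generated by cubes, so by the bilinearity relations \eqref{eq:1.1.1} and \eqref{eq:1.1.2} together with the commutator-expansion results already established, the image of $G^3 \wedge G$ is generated by the exteriors $g^3 \wedge h$. Hence it suffices to bound the exponent of a single generator $g^3 \wedge h$.

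First I would expand $g^3 \wedge h$ using Theorem \ref{T:1} with the cube $g^3$ in the role of the first entry; more precisely, apply Theorem \ref{T:1} directly to $(g^3)^{3^{n-1}} \wedge h = g^{3^n} \wedge h = 1 \wedge h$ (since $g^{3^n} = 1$ as $G$ has exponent $3^n$), so that raising $g^3 \wedge h$ to the power $3^{n-1}$ produces, via Theorem \ref{T:1} applied to the base element $g^3$, a product of terms each of which is a binomial-coefficient power of an exterior built from $g^3$ and iterated commutators $[g^3, \dots, g^3, h]$. The key point is that the exponents appearing are exactly the binomial coefficients $\binom{3^{n-1}}{k}$ for $k = 2, 3, 4, 5$, together with possible cross-terms of the shape $[g^3,h] \wedge [g^3,g^3,\dots,h]$.

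The heart of the argument is then a term-by-term vanishing: each of these binomial-coefficient powers is precisely one of the expressions shown to be trivial in Lemma \ref{L:8}. Concretely, the term carrying $\binom{3^{n-1}}{2}$ is killed by Lemma \ref{L:8}($i$) and ($vii$); the terms carrying $\binom{3^{n-1}}{3}$ are killed by ($ii$), ($v$), and ($ix$); the $\binom{3^{n-1}}{4}$ terms by ($iii$) and ($vi$); and the $\binom{3^{n-1}}{5}$ term by ($iv$). The cross-terms $[g_2^3,h_2] \wedge [g_1^3,h_1]$ and the commutator-valued pieces are handled by ($vii$), ($viii$), ($ix$). Since by Lemma \ref{L:3}($ii$) all these terms commute with one another, the product $(g^3 \wedge h)^{3^{n-1}}$ factors as a product of mutually commuting factors, each individually trivial by Lemma \ref{L:8}, whence $(g^3 \wedge h)^{3^{n-1}} = 1$.

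I expect the main obstacle to be bookkeeping rather than conceptual: one must verify that when $g^3 \wedge h$ is raised to the $3^{n-1}$ and expanded via Theorem \ref{T:1} (whose exponents are $\binom{n}{2}, \binom{n}{3}, \binom{n}{4}, \binom{n}{5}$ with $n = 3^{n-1}$), every resulting factor matches, up to the commuting rearrangement justified by Lemma \ref{L:3}($ii$), one of the nine expressions in Lemma \ref{L:8}. The combinatorial divisibility facts stated just before Lemma \ref{L:8} — namely $3^n \mid \binom{3^n}{m}$ when $(3,m)=1$ and $3^{n-1} \mid \binom{3^n}{3}$ — are what guarantee these binomial coefficients supply enough powers of $3$ to annihilate each factor, and keeping track of which coefficient aligns with which generator is the delicate part.
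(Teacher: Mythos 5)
Your computation for a single generator is exactly the paper's first step: apply Theorem \ref{T:1} to $(g^3)^{3^{n-1}} \wedge h$, observe that the left-hand side equals $g^{3^n}\wedge h = 1$, and kill each remaining factor with Lemma \ref{L:8}, concluding $(g^3\wedge h)^{3^{n-1}}=1$. That part is sound. The genuine gap is the reduction you lean on at the outset: ``it suffices to bound the exponent of a single generator $g^3\wedge h$.'' The image of $G^3\wedge G$ in $G\wedge G$ is indeed generated by these elements, but it need not be abelian: by \eqref{eq:1.2.5}, $[g_1^3\wedge h_1,\, g_2^3\wedge h_2] = [g_1^3,h_1]\wedge [g_2^3,h_2]$, which can be nontrivial in a group of class $5$. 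In a nonabelian $p$-group the orders of generators do not bound the exponent of the group they generate; for instance $\mathbb{Z}_3\wr \mathbb{Z}_3$ is a $3$-group of class $3\leq 5$ generated by elements of order $3$ yet has exponent $9$. So proving each generator has order dividing $3^{n-1}$ does not finish the proof.

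The paper closes precisely this hole with two further steps that your proposal omits. First, using Theorem \ref{T:2} it shows $((g_1^3\wedge h_1)(g_2^3\wedge h_2))^{3^{n-1}}=1$; this is where the two-pair identities Lemma \ref{L:8}(vii), (viii), (ix) are actually used. (Your write-up assigns (vii)--(ix) to the expansion of a single $g^3\wedge h$, but Theorem \ref{T:1} applied to one pair $(g,h)$ never produces terms involving two independent pairs; such cross-terms arise only in the Theorem \ref{T:2} expansion of a product of exteriors.) Second, since $G^3$ is generated by cubes but its elements need not themselves be cubes ($G$ is not assumed regular), the paper verifies $(g_1^3g_2^3\wedge h)^{3^{n-1}} = \left({}^{g_1^3}(g_2^3\wedge h)(g_1^3\wedge h)\right)^{3^{n-1}} = 1$, which reduces to the product case because ${}^{g_1^3}(g_2^3\wedge h) = ({}^{g_1^3}g_2)^3\wedge {}^{g_1^3}h$ is again a cube wedge an element. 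Your argument needs both of these steps (or some substitute for them) before the stated lemma follows.
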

\begin{proof}
Towards proving the claim, first we do the same for a simple exterior, $g^3 \wedge h$, for $g,h \in G$ and then for a product. Finally, showing the claim holds for $g_1^3g_2^3 \wedge h$, where $g_1,g_2 \in G$, completes the proof.
By Theorem \ref{T:1}, we have
\begin{align*}
(g^3)^{3^{n-1}} \wedge h =&\ ([g^3,h] \wedge [g^3,g^3,g^3,h])^{{3^{n-1}}\choose{4}} ([g^3,h] \wedge [g^3,g^3,h])^{{{3^{n-1}}}\choose{3}}\\ 
 &\ (g^3 \wedge [g^3,g^3,g^3,g^3,h])^{{{3^{n-1}}}\choose{5}}
(g^3 \wedge [g^3,g^3,g^3,h])^{{{3^{n-1}}}\choose{4}}\\
&\ (g^3 \wedge [g^3,g^3,h])^{{{3^{n-1}}}\choose{3}}(g^3\wedge [g^3,h])^{{{3^{n-1}}}\choose{2}}(g^3\wedge h)^{3^{n-1}}.
\end{align*}
Now applying Lemma \ref{L:8}, we obtain $(g^3 \wedge h)^{3^{n-1}} = 1$, for $g, h \in G$.  

Let $g_1 ^3, g_2^3 \in G^3 $ and $h_1,h_2 \in G$. Then from Theorem \ref{T:2}, we have
 \begin{align*}
((g_1^3 \wedge h_1)(g_2^3 \wedge h_2))^{3^{n-1}} =&\ ([g_2^3,h_2]\wedge [[g_2^3,h_2],[g_1^3,h_1]])^{{3^{n-1}}\choose{3}}\\&\ ([g_1^3,h_1]\wedge [[g_2^3,h_2],[g_1^3,h_1]])^{2{{3^{n-1}}\choose{3}}+{{3^{n-1}}\choose{2}}}\\&\ ([g_2^3,h_2] \wedge [g_1^3,h_1])^{{3^{n-1}}\choose{2}}(g_1^3 \wedge h_1)^{3^{n-1}}(g_2^3\wedge h_2)^{3^{n-1}}.
\end{align*}
Again applying Lemma \ref{L:8}, yields $((g_1^3 \wedge h_1)(g_2^3 \wedge h_2))^{3^{n-1}} = 1$. Also, for $g_1^3,g_2^3 \in G^3$ and $h \in G$,
\begin{align*}
(g_1^3g_2^3 \wedge h)^{3^{n-1}} =& (^{g_1^3}(g_2^3 \wedge h)(g_1^3 \wedge h))^{3^{n-1}}\\
=& 1.
\end{align*}
Therefore, we have $\e(G^3 \wedge G) \mid 3^{n-1}$.
\end{proof}

\begin{lemma} \label{L:30} Let $G$ be a nilpotent group of class $c$ and $a, b, c, d, e\in G$. 
\begin{itemize}
\item[($i)$] If $w(a)+w(b)+w(c)\ge c+1$, then $^a[b, c] = [b, c]$.
\item[($ii$)] If $w(a)+w(b)+w(c)+w(d)\ge c+1$, then $[a, b][c, d] = [c, d][a, b]$.
\item[($iii$)] If $w(a)+w(b)+w(c)\ge c+1$, then $[ab, c] = [a, c] [b, c]$.
\item[($iv$)] If $w(a)+w(b)+w(c)+w(d)\ge c+1$, then $[a, bc, d] = [a, b, d][a, c, d]$.
\item[($v$)] If $w(a)+w(b)+w(c)+w(d)+w(e)\ge c+1$, then $[a, b, cd, e] = [a, b, c, e] [a, b, d, e]$.
\end{itemize}
\end{lemma}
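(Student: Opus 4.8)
The plan is to deduce all five identities from one principle: in a group of nilpotency class $c$, a commutator whose entries have weights summing to at least $c+1$ lies in $\gamma_{c+1}(G)=1$, because $[\gamma_i(G),\gamma_j(G)]\subseteq\gamma_{i+j}(G)$ and hence $w([x,y])\ge w(x)+w(y)$. Combined with the elementary identities gathered in Lemma \ref{L:10}, this lets each part follow from the ones before it, so I would establish them in the order (i), (ii), (iii), (iv), (v).

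For (i), \eqref{eq:1.11.4} gives ${}^{a}[b,c]=[a,[b,c]]\,[b,c]$, and the correction satisfies $[a,[b,c]]\in\gamma_{w(a)+w(b)+w(c)}(G)\subseteq\gamma_{c+1}(G)=1$, so ${}^{a}[b,c]=[b,c]$. Part (ii) just says $[a,b]$ and $[c,d]$ commute: by \eqref{eq:1.11.3}, $[a,b][c,d]=[[a,b],[c,d]]\,[c,d][a,b]$, and the hypothesis forces $[[a,b],[c,d]]=1$. For (iii) I would expand $[ab,c]={}^{a}[b,c]\,[a,c]$ via \eqref{eq:1.11.1}, invoke (i) to replace ${}^{a}[b,c]$ by $[b,c]$ under the hypothesis $w(a)+w(b)+w(c)\ge c+1$, and then invoke (ii) to reorder $[b,c][a,c]$ as $[a,c][b,c]$.

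The step demanding genuine care is (iv). I would first use \eqref{eq:1.11.2} to write $[a,bc]=[a,b]\,{}^{b}[a,c]$, so that $[a,bc,d]=[\,[a,b]\,{}^{b}[a,c],\,d]$; since the weights of the two factors together with $w(d)$ exceed $c$, part (iii) splits this as $[a,b,d]\,[\,{}^{b}[a,c],d]$. The delicate point is to show the inner conjugation disappears: writing ${}^{b}[a,c]=[b,[a,c]]\,[a,c]$ and applying (iii) once more gives $[\,{}^{b}[a,c],d]=[\,[b,[a,c]],d]\,[a,c,d]$, where $[\,[b,[a,c]],d]$ has total weight at least $w(a)+w(b)+w(c)+w(d)\ge c+1$ and so vanishes. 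Hence $[a,bc,d]=[a,b,d]\,[a,c,d]$.

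Finally (v) is immediate from (iv) with $[a,b]$ in the role of the first entry, because $w([a,b])+w(c)+w(d)+w(e)\ge w(a)+w(b)+w(c)+w(d)+w(e)\ge c+1$. I expect the main obstacle to be purely clerical: tracking which weight inequality justifies each application of (i)--(iii) inside the proof of (iv), since every invocation needs its own sum-of-weights bound and the conjugating factors raise the constituent weights in a way that must be checked against the class $c$.
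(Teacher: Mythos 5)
Your parts (i)--(iii) are correct and coincide with the paper's own proof: expand with \eqref{eq:1.11.4}, \eqref{eq:1.11.3} and \eqref{eq:1.11.1} respectively, and kill the correction terms using $[\gamma_i(G),\gamma_j(G)]\subseteq \gamma_{i+j}(G)\subseteq\gamma_{c+1}(G)=1$.

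Parts (iv) and (v), however, as you prove them are not the statements of the lemma. The paper fixes the convention that all commutators are \emph{right}-normed, so $[a,bc,d]$ means $[a,[bc,d]]$ and (iv) asserts $[a,[bc,d]]=[a,[b,d]]\,[a,[c,d]]$. You instead read $[a,bc,d]$ as the left-normed $[[a,bc],d]$ --- your opening step ``$[a,bc]=[a,b]\,{}^{b}[a,c]$, so $[a,bc,d]=[\,[a,b]\,{}^{b}[a,c],d]$'' only makes sense with that bracketing --- and what you establish is $[[a,bc],d]=[[a,b],d]\,[[a,c],d]$, a different identity. The same misreading underlies your (v): only in the left-normed convention is $[a,b,cd,e]$ literally a three-fold commutator with $[a,b]$ as its first entry; right-normed it is $[a,[b,[cd,e]]]$, and $a$ cannot be absorbed into a block with $b$. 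The distinction matters downstream, since Lemma \ref{L:12} applies (iv) and (v) to split entries sitting \emph{inside} nested right-normed commutators such as $[h,gh,gh,g,h]$, which your versions do not cover. The repair is to expand the \emph{inner} commutator first, which is what the paper does: by \eqref{eq:1.11.1} and \eqref{eq:1.11.4}, $[bc,d]=[b,c,d][c,d][b,d]$; then distribute $[a,-]$ over this product using \eqref{eq:1.11.2}, observe that the resulting conjugations are trivial by your (i), that $[a,b,c,d]\in\gamma_{c+1}(G)=1$, and reorder the two surviving factors with (ii). Your weight bookkeeping carries over verbatim to this corrected expansion, so the gap is one of misread notation rather than of method; but as written, your argument for (iv) and (v) does not prove the lemma.
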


\begin{proof}
\begin{itemize}
\item[($i$)] Applying \eqref{eq:1.11.4} to $^a[b, c]$ gives $(i)$.
\item[($ii$)] Applying \eqref{eq:1.11.3} to $[a, b][c, d]$ gives $(ii)$.
\item[($iii$)] Expanding $[ab, c]$ using \eqref{eq:1.11.1} yields $[ab, c] = \;^ a[b, c][a, c]$. The action becomes trivial by $(i)$ and both the terms commute by $(ii)$ giving the result.
\item[($iv$)] Using \eqref{eq:1.11.1}, \eqref{eq:1.11.4} we have $[bc, d] = [b, c, d][c, d][b, d]$. Then expanding twice $[a, [bc, d]]=[a,[b, c, d][c, d][b, d]]$ using \eqref{eq:1.1.2} gives, 
\begin{equation*}[a, bc, d] = [a, b, c, d] ^{[b, c, d]}[a, c, d] ^{[b, c, d][c, d]}[a, b, d]
\end{equation*}
Note that the actions becomes trivial by $(i)$ and $[a,b,c,d]=1$, proving the result. Similarly $(v)$ can be proved.
\end{itemize}
\end{proof}

\begin{lemma}\label{L:12}
Let $G$ be a group of nilpotenct class $5$. Then the following hold for $g,h \in G$.
\begin{itemize}
\item[($i$)] $[gh,gh,gh,gh,h] = [h,h,h,g,h][h,h,g,g,h][h,g,h,g,h][g,h,h,g,h]$\\ $[h,g,h,g,h][g,h,g,g,h][g,g,h,g,h][g,g,g,g,h]$.
\item[($ii$)] $[h,h,gh,gh,h] = [h,h,h,g,h][h,h,g,g,h]$.
\item[($iii$)] $[h,gh,h,gh,h] = [h,h,h,g,h][h,g,h,g,h]$.
\item[($iv$)] $[gh,h,h,gh,h] = [h,h,h,g,h][g,h,h,g,h]$.
\item[($v$)] $[h,gh,gh,gh,h] = [h,h,h,g,h][h,h,g,g,h][h,g,h,g,h][h,g,g,g,h]$.
\item[($vi$)] $[gh,h,gh,gh,h] = [h,h,h,g,h][h,h,g,g,h][g,h,h,g,h][g,h,g,g,h]$.
\item[($vii$)] $[gh,gh,h,gh,h] = [h,h,h,g,h][g,h,h,g,h][h,g,h,g,h] [g,g,h,g,h]$. 
\end{itemize}
\end{lemma}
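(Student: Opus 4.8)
The plan is to obtain each identity by expanding every occurrence of the product $gh$ in the given weight-$5$ commutator, exploiting the fact that in a group of class $5$ a right-normed commutator of weight $5$ behaves multilinearly in its entries, so that all lower-order corrections drop out.

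First I would simplify the innermost bracket. In each of (i)--(vii) the two innermost entries are $gh$ and $h$, and by \eqref{eq:1.11.1} with $g_1=h$ together with $[h,h]=1$ one has ${}^{g}[h,h][g,h]=[g,h]$, that is $[gh,h]=[g,h]$. Hence every commutator of the form $[\,\cdots,gh,h\,]$ equals $[\,\cdots,g,h\,]$; this handles the fourth slot exactly, replacing the $gh$ there by $g$, and leaves only the entries in the first three positions still to be expanded. This step is important because Lemma~\ref{L:30} supplies multilinearity only for $2$-, $3$- and $4$-fold commutators, and the fourth slot would otherwise require a genuine $5$-fold expansion.

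Next I would expand the remaining occurrences of $gh$ one slot at a time, from the outermost inward, using Lemma~\ref{L:30}. To split the first entry I write the commutator as $[gh,W]$ with $W$ of weight $4$ and apply Lemma~\ref{L:30}(iii), since $w(g)+w(h)+w(W)=6$. To split the second entry I group the tail into a single element $V$ of weight $3$, so that the commutator has the shape $[a,gh,V]$, and apply Lemma~\ref{L:30}(iv), the total weight again being $6$. To split the third entry I group the tail into an element $U$ of weight $2$, giving the shape $[a,b,gh,U]$, and apply Lemma~\ref{L:30}(v). At every stage the weight hypothesis $\ge 6$ of Lemma~\ref{L:30} is met precisely because the grouped tail absorbs the required weight, so all correction terms are commutators of weight $\ge 6$ and therefore vanish in a group of class $5$.

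After these expansions the commutator becomes a product of weight-$5$ commutators indexed by the choice of $g$ or $h$ in each expanded slot. Since $\gamma_5(G)\subseteq Z(G)$ when $G$ has class $5$, all of these factors are central, hence commute with one another, and no further correction terms arise when they are collected; reading off the surviving factors gives the products recorded in (i)--(vii). The main point requiring care is the weight bookkeeping: each time a factor $gh$ is separated one must verify that the complementary entries carry enough weight for the relevant part of Lemma~\ref{L:30} to apply, and it is exactly the device of regrouping the tail as a single element of weight $4$, $3$, or $2$ (for the first, second, and third slots respectively) that makes the hypotheses hold and thereby guarantees the expansion is genuinely multilinear.
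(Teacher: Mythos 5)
Your proposal is correct and follows essentially the same route as the paper's proof: first reduce the innermost bracket via $[gh,h]=[g,h]$, then split the first, second, and third slots in turn using Lemma~\ref{L:30}(iii), (iv), (v) with the tail grouped as a single element of weight $4$, $3$, $2$ respectively to meet the weight-$6$ hypothesis, and finally collect the resulting weight-$5$ factors, which commute because they are central. The weight bookkeeping you describe is exactly the paper's, so there is no gap.
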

\begin{proof}
Expanding using \eqref{eq:1.11.1} gives $[gh, h] =[g, h]$. Using Lemma \ref{L:30} $(iii)$ we have, 
\begin{equation*}[gh, gh, gh, g, h] = [g, gh, gh, g, h] [h, gh, gh, g, h].
\end{equation*}
 Applying Lemma \ref{L:30} $(iv)$ to terms on the right hand side of the above equation yields,
\begin{align*}&[g, gh, gh, g, h] = [g, g, gh, g, h] [g, h, gh, g, h], \\&[h, gh, gh, g, h] = [h, g, gh, g, h] [h, h, gh, g, h].
\end{align*}
Applying Lemma \ref{L:30} $(v)$ to each terms on the right of the equalities above, we obtain
 \begin{align*} &[g, g, gh, g, h] = [g, g, g, g, h] [g, g, h, g, h], \\
 &[g, h, gh, g, h] = [g, h, g, g, h] [g, h, h, g, h], \\
 &[h, g, gh, g, h] = [h, g, g, g, h [h, g, h, g, h], \\
 &[h, h, gh, g, h] = [h, h, g, g, h [h, h, h, g, h].
 \end{align*} Also, all the eight terms commute by Lemma \ref{L:30} $(ii)$. This proves $(i)$ and $(ii)$ - $(vii)$ can be proved similarly.
\end{proof}

The next Theorem proves the conjecture for a $5$-group of class $5$.

\begin{theorem}
Let $G$ be a $5$-group of class $5$. Then, $\e(G \wedge G) \mid \e(G)$.
\end{theorem}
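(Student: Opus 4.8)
The plan is to follow the three-stage architecture of the proof of Lemma~\ref{L:9}: first bound the exponent on a single exterior generator $g\wedge h$, then on a product $(g_1\wedge h_1)(g_2\wedge h_2)$, and finally on a general generator $g_1g_2\wedge h$, so that the bound propagates to all of $G\wedge G$. Write $\e(G)=5^{n}$; since $G$ is a $5$-group of class $5$, this is exactly the boundary case $c=p$ which is \emph{not} covered by the class $\le p-1$ results, so an explicit computation is unavoidable. The starting point is Theorem~\ref{T:1} applied with base $g$ and exponent $m=5^{n}$: because $g^{5^{n}}=1$ the left-hand side $g^{5^{n}}\wedge h$ vanishes, and solving for $(g\wedge h)^{5^{n}}$ exhibits it as a product of six correction terms, each of the shape $(u\wedge v)^{\binom{5^{n}}{k}}$ with $2\le k\le 5$ and $w(u)+w(v)\ge 3$.

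Most of these correction terms I would kill by a valuation count. Writing $v_{5}$ for the $5$-adic valuation, Kummer's formula gives $v_{5}\binom{5^{n}}{k}=n-v_{5}(k)$, so every term with $2\le k\le 4$ carries a coefficient divisible by $5^{n}$. For such a term I push the power inside the exterior using Lemma~\ref{L:4}(i) when the second entry has weight $\ge 3$, and using Lemma~\ref{L:4}(ii) to dispose of the lone weight-$2$ entry in $(g\wedge[g,h])^{\binom{5^{n}}{2}}$; once inside, the exponent $5^{n}$ annihilates the inner commutator since $\e(G)=5^{n}$. This clears all correction terms except the top one.

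The crux, and the point where the argument for $p=5$ genuinely departs from a valuation-only argument, is the single surviving term $(g\wedge[g,g,g,g,h])^{\binom{5^{n}}{5}}$: here $v_{5}\binom{5^{n}}{5}=n-1$ only, so pushing the power inside leaves $g\wedge[g,g,g,g,h]^{\,5^{n-1}u}$ with $\gcd(u,5)=1$, and triviality does \emph{not} follow from $\e(G)=5^{n}$ alone. This is precisely the irregularity obstruction at $c=p$: although $\gamma_{5}(G)$ is central, a class-$5$ exponent-$5$ group need not be regular, so $[g,g,g,g,h]$ may have order $5^{n}$, and the naive estimate yields only $\e(G\wedge G)\mid 5^{\,n+1}$. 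To upgrade this to the sharp bound I would invoke Lemma~\ref{L:12}, whose multilinear expansions of $[gh,gh,gh,gh,h]$ and its siblings isolate the eight basic weight-$5$ commutators; feeding these into the collection of $(gh)^{5}$ via \eqref{eq:1.5.1}, together with $g^{5^{n}}=h^{5^{n}}=(gh)^{5^{n}}=1$, produces an identity inside the central subgroup $\gamma_{5}(G)$ that rewrites the residual contribution so that its $5^{n-1}$-th power is forced to be a $5^{n}$-th power in $G$, whence $(g\wedge[g,g,g,g,h])^{5^{n-1}}=1$. \emph{Pinning down that this central $\gamma_{5}$-identity cancels the top term exactly, rather than merely bounding its order by $5$, is the main obstacle of the whole proof.}

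With $(g\wedge h)^{5^{n}}=1$ established, the product stage is routine. Theorem~\ref{T:2} expands $((g_{1}\wedge h_{1})(g_{2}\wedge h_{2}))^{5^{n}}$ as the two single-generator powers (now trivial) times three central terms whose coefficients $\binom{5^{n}}{2}$, $2\binom{5^{n}}{3}+\binom{5^{n}}{2}$, and $\binom{5^{n}}{3}$ all have $5$-adic valuation $n$; these are annihilated by Lemma~\ref{L:4} and Lemma~\ref{L:6} exactly as in Lemma~\ref{L:9}. Finally, the relation ${}^{g_{1}}(g_{2}\wedge h)(g_{1}\wedge h)=g_{1}g_{2}\wedge h$ reduces a general generator to conjugates of simple ones, and conjugation by $g_{1}\in G$ preserves the exponent bound; since $G\wedge G$ is generated by the simple exteriors, these three reductions propagate the bound to every element, giving $\e(G\wedge G)\mid 5^{n}=\e(G)$ and hence $\e(\M)\mid\e(G)$.
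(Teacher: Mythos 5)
Your overall architecture coincides with the paper's: reduce to simple exteriors via Theorem \ref{T:2} and the identity $g_1g_2\wedge h={}^{g_1}(g_2\wedge h)(g_1\wedge h)$, expand $1=g^{5^n}\wedge h$ by Theorem \ref{T:1}, and kill the correction terms with $2\le k\le 4$ by the valuation count $v_5\binom{5^n}{k}=n$ together with Lemma \ref{L:4}(i),(ii). You also correctly single out $(g\wedge[g,g,g,g,h])^{\binom{5^n}{5}}$ as the only surviving obstruction and point to Lemma \ref{L:12} as the relevant tool. But at exactly this point the proposal stops being a proof: you explicitly defer the key step (``pinning down that this central $\gamma_5$-identity cancels the top term exactly \dots is the main obstacle''), and the mechanism you sketch is not the right one. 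The needed relation is not an identity ``inside the central subgroup $\gamma_5(G)$'' forcing some element to be a $5^n$-th power in $G$; the obstruction lives in $G\wedge G$, and inserting Lemma \ref{L:12} into the collection formula \eqref{eq:1.5.1} for $(gh)^5$ produces no relation among the wedge elements $(x\wedge c)^{\binom{5^n}{5}}$.

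The paper closes this gap by an elimination scheme run entirely inside $G\wedge G$, resting on two observations. First, each basic term $(x\wedge c)^{\binom{5^n}{5}}$, with $x\in\{g,h\}$ and $c$ one of the eight basic weight-$5$ commutators $[x_1,x_2,x_3,g,h]$, has order dividing $5$: since $5^n\mid 5\binom{5^n}{5}$, Lemma \ref{L:4}(i) pushes the power onto the central entry $c$, so all exponents of such terms may be read modulo $5$. Second, substituting $g\mapsto gh$ into the identity $(*)\colon 1=(g\wedge[g,g,g,g,h])^{\binom{5^n}{5}}(g\wedge h)^{5^n}$ and expanding $[gh,gh,gh,gh,h]$ by Lemma \ref{L:12}(i) (the wedge being multiplicative over central entries) yields a new relation $(**)$ among the sixteen basic terms, with the term $(g\wedge[g,g,g,g,h])$ cancelled against $(*)$. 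The paper then iterates the substitution $g\mapsto gh$ three more times --- into $(**)$, into the resulting relation, and into the one after that --- each time reducing exponents mod $5$ and cancelling against the previously derived relations, until comparison of the last two identities leaves exactly $(h\wedge[h,h,h,g,h])^{\binom{5^n}{5}}=1$; interchanging the roles of $g$ and $h$ and using $g\wedge[g,g,g,g,h]=(g\wedge[g,g,g,h,g])^{-1}$ then kills the top term. This four-round bookkeeping of integer exponents is the actual content of the theorem, and it is precisely what your proposal leaves unproven.
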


\begin{proof}
Let $\e(G)=5^n$ for some $n \in \mathbb{N}$. By Theorem \ref{T:2}, its enough to prove that $(g \wedge h)^n = 1,$ for all $g,h \in G$. From Theorem \ref{T:1} and using Lemma \ref{L:4} (i) and (ii), we obtain
\begin{align*}
1 = g^{5^n} \wedge h = (g \wedge [g,g,g,g,h])^{{5^n}\choose{5}}(g \wedge h)^{5^n}\ (*)
\end{align*}

We will show that $(g \wedge [g,g,g,g,h])^{{5^n}\choose{5}} = 1$. Towards that,
replacing $g$ by $gh$ in $(*)$ yields
\begin{align*}
1 =&\ (gh \wedge [gh,gh,gh,gh,h])^{{5^n}\choose{5}}(gh \wedge h)^{5^n}.
\end{align*}
Now, using Lemma \ref{L:12} ($i$) and expanding, we have
\begin{align*}
1 =& \{^g \{(h \wedge [h,h,h,g,h])(h \wedge [h,h,g,g,h])(h \wedge [h,g,h,g,h])(h \wedge [h,g,g,g,h])\\& (h \wedge [g,h,h,g,h])(h \wedge [g,h,g,g,h])(h \wedge [g,g,h,g,h])(h \wedge [g,g,g,g,h])\}\\& (g \wedge [h,h,h,g,h])(g \wedge [h,h,g,g,h])(g \wedge [h,g,h,g,h])(g \wedge [h,g,g,g,h])\\& (g \wedge [g,h,h,g,h])(g \wedge [g,h,g,g,h])(g \wedge [g,g,h,g,h])(g \wedge [g,g,g,g,h])\}^{{5^n}\choose{5}}\\&(g \wedge h)^{5^n}.
\end{align*}
Note that the action, once distributed onto each term, vanishes and the terms commute with one another. Further using $(*)$, we obtain
\begin{align*}
 1 =& \{(h \wedge [h,h,h,g,h])(h \wedge [h,h,g,g,h])(h \wedge [h,g,h,g,h])(h \wedge [h,g,g,g,h])\\& (h \wedge [g,h,h,g,h])(h \wedge [g,h,g,g,h])(h \wedge [g,g,h,g,h])(h \wedge [g,g,g,g,h])\}\\& (g \wedge [h,h,h,g,h])(g \wedge [h,h,g,g,h])(g \wedge [h,g,h,g,h])(g \wedge [h,g,g,g,h])\\& (g \wedge [g,h,h,g,h])(g \wedge [g,h,g,g,h])(g \wedge [g,g,h,g,h])\}^{{5^n}\choose{5}}.\ (**)
\end{align*}
Replacing $g$ by $gh$ in the above expression and further expanding using Lemma \ref{L:12} yields
\begin{align*}
1 =& \{(h \wedge [h,h,h,g,h])^{15}(h \wedge [h,h,g,g,h])^7(h \wedge [h,g,h,g,h])^7(h \wedge [h,g,g,g,h])^3\\& (h \wedge [g,h,h,g,h])^7(h \wedge [g,h,g,g,h])^3(h \wedge [g,g,h,g,h])^3(h \wedge [g,g,g,g,h])\\&
(g \wedge [h,h,h,g,h])^7(g \wedge [h,h,g,g,h])^3(g \wedge [h,g,h,g,h])^3(g \wedge [h,g,g,g,h])\\& (g \wedge [g,h,h,g,h])^3(g \wedge [g,h,g,g,h])(g \wedge [g,g,h,g,h])\}^{{5^n}\choose{5}}.
\end{align*}
Now by applying $(**)$ gives
\begin{align*}
1 =& \{(h \wedge [h,h,h,g,h])^{14}(h \wedge [h,h,g,g,h])^6(h \wedge [h,g,h,g,h])^6(h \wedge [h,g,g,g,h])^2\\& (h \wedge [g,h,h,g,h])^6(h \wedge [g,h,g,g,h])^2(h \wedge [g,g,h,g,h])^2(g \wedge [h,h,h,g,h])^6\\&(g \wedge [h,h,g,g,h])^2(g \wedge [h,g,h,g,h])^2(g \wedge [g,h,h,g,h])^3\}^{{5^n}\choose{5}}.\ (***)
\end{align*}
Thus we have,
\begin{align*}
&\{(h \wedge [h,g,g,g,h])^2(h \wedge [g,h,g,g,h])^2(h \wedge [g,g,h,g,h])^2(g \wedge [h,h,g,g,h])^2\\&(g \wedge [h,g,h,g,h])^2(g \wedge [g,h,h,g,h])^3\}^{{5^n}\choose{5}} = \{((h \wedge [h,h,h,g,h])^{-4}(h \wedge [h,h,g,g,h])^{-1}\\&(h \wedge [h,g,h,g,h])^{-1}(h\wedge [g,h,h,g,h])^{-1}(g \wedge [h,h,h,g,h])^{-1}\}^{{5^n}\choose{5}}.\ (****)
\end{align*}
Again, replacing $g$ by $gh$ in $(***)$ and expanding yields
\begin{align*}
1 =&  \{(h \wedge [h,h,h,g,h])^{51}(h \wedge [h,h,g,g,h])^{12}(h \wedge [h,g,h,g,h])^{12}(h \wedge [h,g,g,g,h])^2\\& (h \wedge [g,h,h,g,h])^{13}(h \wedge [g,h,g,g,h])^2(h \wedge [g,g,h,g,h])^2(g \wedge [h,h,h,g,h])^{13}\\&(g \wedge [h,h,g,g,h])^2(g \wedge [h,g,h,g,h])^2(g \wedge [g,h,h,g,h])^3\}^{{5^n}\choose{5}}\\
=&  \{(h \wedge [h,h,h,g,h])(h \wedge [h,h,g,g,h])^{2}(h \wedge [h,g,h,g,h])^{2}(h \wedge [h,g,g,g,h])^2\\& (h \wedge [g,h,h,g,h])^{3}(h \wedge [g,h,g,g,h])^2(h \wedge [g,g,h,g,h])^2(g \wedge [h,h,h,g,h])^{3}\\&(g \wedge [h,h,g,g,h])^2(g \wedge [h,g,h,g,h])^2(g \wedge [g,h,h,g,h])^3\}^{{5^n}\choose{5}},\mbox{\ by\ Lemma \ref{L:4}(i)}.
\end{align*}
Now applying $(****)$ yields,
\begin{align*}
1 =& \{(h \wedge [h,h,h,g,h])^2(h \wedge [h,h,g,g,h])(h \wedge [h,g,h,g,h]) (h \wedge [g,h,h,g,h])^{2}\\& (g \wedge [h,h,h,g,h])^2\}^{{5^n}\choose{5}}.\ (*****)
\end{align*}
Further replacing $g$ by $gh$ in $(*****)$, expanding and applying Lemma \ref{L:4} ($i$) gives,
\begin{align*}
1 =& \{(h \wedge [h,h,h,g,h])^3(h \wedge [h,h,g,g,h])(h \wedge [h,g,h,g,h]) (h \wedge [g,h,h,g,h])^{2}\\& (g \wedge [h,h,h,g,h])^2\}^{{5^n}\choose{5}}.
\end{align*}
Thus, comparing the above expression with $(*****)$, we obtain $(h \wedge [h,h,h,g,h])^{{5^n}\choose{5}} = 1$. Hence, $(g \wedge [g,g,g,h,g])^{{5^n}\choose{5}} = 1$. Now $(g \wedge [g,g,g,g,h])^{{5^n}\choose{5}} = (g \wedge [g,g,g,h,g])^{{-1}{{5^n}\choose{5}}}=1$, and the result follows. 
\end{proof}

Finally we come to the main Theorem of this section.

\begin{theorem}\label{T:3}
Let $G$ be a finite $p$-group of nilpotency class 5. If $p$ is odd, then $\e(\M)\mid \e(G)$\end{theorem}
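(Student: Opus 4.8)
The plan is to reduce the general odd-prime case of nilpotency class $5$ to the prime $p=3$ via a regularity argument, since the preceding theorem already settles $p=5$ and all primes $p\geq 7$ are handled by class $\leq p-2 < p-1$ considerations. First I would observe that for $p\geq 7$ a group of class $5$ has class at most $p-2$, so Theorem \ref{T:46} (class $\leq p-1$) immediately gives $\e(G\wedge G)\mid\e(G)$, and hence $\e(\M)\mid\e(G)$. This isolates the two genuinely hard primes, $p=3$ and $p=5$. The case $p=5$ is exactly the content of the theorem just proved, so the remaining work is entirely at $p=3$.

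For $p=3$ I would use the structural results built up in this section. Set $\e(G)=3^n$. By Theorem \ref{T:2}, the exponent of $G\wedge G$ is controlled by the exponents of the simple exteriors $g\wedge h$ together with the correction terms, so it suffices to bound $(g\wedge h)^{3^n}$. The key device is Lemma \ref{L:9}, which gives $\e(G^3\wedge G)\mid 3^{n-1}$. The strategy is an inductive descent on $n$: one relates the exterior square of $G$ to that of $G/G^{3}$ (or to the action of $G^3$) so that raising to the power $3$ effectively drops the exponent by one factor of $3$, and then Lemma \ref{L:9} absorbs the $G^3$-contribution into $3^{n-1}$. Concretely, I would show $(g\wedge h)^{3^n}$ lies in the image of $G^3\wedge G$ after suitable manipulation using Theorem \ref{T:1} and Lemma \ref{L:8}, whose nine parts are precisely tailored to kill each binomial-coefficient-weighted term $\binom{3^{n-1}}{k}$ appearing in the expansion of $g^{3^{n-1}}\wedge h$.

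The main obstacle will be the bookkeeping at $p=3$: unlike larger primes, where the class bound does all the work, here the binomial coefficients $\binom{3^{n-1}}{2}$ and $\binom{3^{n-1}}{3}$ are only divisible by $3^{n-2}$ rather than $3^{n-1}$, so the naive estimate loses a factor of $3$ and one must exploit the extra cancellations coming from the commutator identities (Lemma \ref{L:7}) and the centrality statements of Lemma \ref{L:3}(ii). This is exactly why Lemma \ref{L:8} is stated so precisely for the borderline exponents $\binom{3^{n-1}}{2}$ and $\binom{3^{n-1}}{3}$, and why Lemma \ref{L:9} is phrased as a bound of $3^{n-1}$ rather than $3^{n}$. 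I expect the proof to assemble these pieces: invoke Theorem \ref{T:46} for $p\geq 7$, cite the just-proved theorem for $p=5$, and for $p=3$ combine Theorem \ref{T:1}, Lemma \ref{L:8}, and Lemma \ref{L:9} to conclude $(g\wedge h)^{3^n}=1$, hence $\e(G\wedge G)\mid 3^n=\e(G)$ and therefore $\e(\m(G))\mid\e(G)$.
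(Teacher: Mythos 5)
Your case split matches the paper's ($p\geq 7$ via the class $\leq p-1$ result, Theorem \ref{T:4.8}; $p=5$ via the immediately preceding theorem; $p=3$ via the filtration by $G^3$ with Lemma \ref{L:9} giving $\e(im(G^3\wedge G))\mid 3^{n-1}$), but at $p=3$ your plan has a genuine gap: you never justify the other half of the estimate, namely that $\e\bigl(\frac{G}{G^3}\wedge \frac{G}{G^3}\bigr)\mid 3$, equivalently that the cube of \emph{every} element of $G\wedge G$ lands in $im(G^3\wedge G)$. This is not bookkeeping. The quotient $G/G^3$ has exponent $3$ but class possibly greater than $2=p-1$, so Theorem \ref{T:4.8} does not apply to it; Lemma \ref{L:9} is vacuous for it (a group of exponent $3$ has $G^3=1$); and your ``inductive descent on $n$'' has exactly this statement as its base case $n=1$, so the induction cannot start. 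The paper closes this hole with an external input: since $\e(G/G^3)=3$, Moravec's theorem from \cite{P.M.1} (groups of exponent $3$ are metabelian of exponent $p$, indeed of class at most $3$) gives $\e\bigl(\frac{G}{G^3}\wedge\frac{G}{G^3}\bigr)\mid 3$, and then the exact sequence $G^3\wedge G\rightarrow G\wedge G\rightarrow \frac{G}{G^3}\wedge\frac{G}{G^3}\rightarrow 1$ together with Lemma \ref{L:9} yields $\e(G\wedge G)\mid 3\cdot 3^{n-1}=3^n$. None of the tools you list (Theorem \ref{T:1}, Lemma \ref{L:8}, Lemma \ref{L:9}) can substitute for this citation.

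Your proposed concrete mechanism also fails quantitatively: showing $(g\wedge h)^{3^n}\in im(G^3\wedge G)$ would, combined with Lemma \ref{L:9}, only give $(g\wedge h)^{3^{2n-1}}=1$, i.e.\ the bound $3^{2n-1}$ rather than $3^n$; what is needed is that \emph{third} powers lie in the image, and of arbitrary elements, not just simple exteriors. Nor does Theorem \ref{T:1} produce this: taking $n=3$ there gives $g^3\wedge h=([g,h]\wedge[g,g,h])(g\wedge[g,g,h])(g\wedge[g,h])^{3}(g\wedge h)^3$, and the unpowered residual terms $([g,h]\wedge[g,g,h])$ and $(g\wedge[g,g,h])$ are not visibly in $im(G^3\wedge G)$ — this is precisely the loss of a factor $3$ at $p=3$ that you yourself flag. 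Once you add the missing invocation of \cite{P.M.1} for the exponent-$3$ quotient, your argument collapses onto the paper's proof.
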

\begin{proof}
The claim holds when $p \geq 5$. We prove for $p = 3$.
Consider the following exact sequence. 
\begin{align*}
 G^3\wedge G \rightarrow G \wedge G \rightarrow \frac{G}{G^3}\wedge \frac{G}{G^3} \rightarrow 1.
\end{align*}
We have, $\e( G \wedge G )\mid \e(im( G^{3}\wedge G))\ \e( \frac{G}{G^{3}}\wedge \frac{G}{G^{3}} )$. Note that $\e(\frac{G}{G^{3}}) = 3$ and hence from \cite{P.M.1}, we have $\e( \frac{G}{G^{3}}\wedge \frac{G}{G^{3}} ) \mid 3$. By Lemma \ref{L:9}, we have $\e(im( G^{3}\wedge G)) \mid 3^{n-1}$ and hence the claim.
\end{proof}

\section{Regular groups, Powerful groups and groups with power-commutator structure}

In this section, we prove that $\e(\M)\mid \e(G)$ for powerful $p$ groups, the class of groups considered by Arganbright in \cite{DEA}, which includes the class of potent $p$ groups, $p$ groups of class at most $p-1$.

The first part of next lemma can be found in \cite{G.E}, we record its proof here as the order of the factors is different in \cite{G.E}.
\begin{lemma}\label{L:4.1}
Let $N, M$ be normal subgroups of a group $G$ and $i$ be a positive integer. If $N$ is abelian, then for $n, n_1\in N$, and $m, m_1\in M$, we have
\begin{itemize}
\item[$(i)$] ${n^i}\otimes m =(n\otimes [n, m])^{{i}\choose{2}}(n\otimes m)^i$.
\item[$(ii)$] ${n^i}\otimes [n_1, m] =(n\otimes [n_1, m])^i$.
\item[$(iii)$] $((n\otimes m)(n_1\otimes m_1))^i =([n_1, m_1]\otimes [n, m])^{{i}\choose{2}}(n\otimes m)^i(n_1\otimes m_1)^i$.
\end{itemize}
\end{lemma}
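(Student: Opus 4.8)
Looking at this lemma, I need to prove three identities about the nonabelian tensor product when $N$ is an abelian normal subgroup. Let me think about the structure and the tools available.

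The statement concerns $n^i \otimes m$, $n^i \otimes [n_1, m]$, and products raised to powers $i$. These are analogous to the commutator collection formulas, but now in the tensor product. The key available tools are the defining relations \eqref{eq:1.1.1} and \eqref{eq:1.1.2} for the tensor product, Proposition \ref{P:2}, and the crossed module rules.

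Let me sketch my proof plan.

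=== PROOF PROPOSAL ===

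\begin{proof}[Proof sketch]
The plan is to prove all three parts by induction on $i$, using the defining relations of the tensor product together with the crossed module structure from Proposition \ref{P:2}, and crucially exploiting that $N$ is abelian.

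\emph{Part $(i)$.} I would induct on $i$. The base case $i=1$ is trivial. For the inductive step, write $n^i = n\cdot n^{i-1}$ and expand using \eqref{eq:1.1.1}, giving $n^i\otimes m = ({}^n(n^{i-1})\otimes {}^nm)(n\otimes m)$. Since $N$ is abelian and normal, ${}^n(n^{i-1}) = n^{i-1}$, so the first factor is $n^{i-1}\otimes {}^nm$. Now ${}^nm = [n,m]m$ by \eqref{eq:1.11.4}, so I expand $n^{i-1}\otimes [n,m]m$ via \eqref{eq:1.1.2}. The key simplification is that the resulting action terms collapse: because $N$ is abelian, factors of the form ${}^{n'}(n^{i-1}\otimes \cdot)$ with $n'\in N$ simplify, and the term $n^{i-1}\otimes [n,m]$ lies in a central-enough position to be handled by part $(ii)$ applied inductively. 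Collecting the $n\otimes[n,m]$ contributions, which number ${i-1\choose 1}$ additional copies beyond the inductive count ${i-1\choose 2}$, and using the Pascal identity ${i-1\choose 2}+{i-1\choose 1}={i\choose 2}$, yields the stated exponent.

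\emph{Part $(ii)$.} This is the cleaner identity and I expect it to feed into $(i)$. Again by induction on $i$: expand $n^i\otimes[n_1,m] = (n\otimes[n_1,m])({}^n(n^{i-1})\otimes{}^n[n_1,m])$. Here the essential point is that ${}^n[n_1,m]=[n_1,m]$ when the relevant action is controlled — more precisely, since $\lambda'(n\otimes[n_1,m])$ and the crossed module rules from Proposition \ref{P:2}$(iii)$ show that the tensor $n\otimes[n_1,m]$ behaves like a commutator $[n,[n_1,m]]$ living in the derived part, so conjugation by $n$ acts trivially on $n^{i-1}\otimes[n_1,m]$. The induction then collapses directly to $(n\otimes[n_1,m])^i$ with no correction terms, since $N$ abelian kills the would-be ${i\choose 2}$ term.

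\emph{Part $(iii)$.} Induct on $i$, writing the $i$-th power as the first factor times the $(i-1)$-st power and applying \eqref{eq:1.2.6}, which states $(g_1\otimes h_1)(g_2\otimes h_2)=([g_1,h_1]\otimes[g_2,h_2])(g_2\otimes h_2)(g_1\otimes h_1)$. Here the commutator term $[n_1,m_1]\otimes[n,m]$ is central (by Proposition \ref{P:2}$(iii)$ and $(viii)$ it equals a commutator of tensors), so I can collect all such terms to the front; the count accumulates as $1+2+\cdots+(i-1)={i\choose 2}$, giving the claimed exponent.

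\textbf{Main obstacle.} The hard part will be Part $(i)$: tracking precisely how the action ${}^n(\cdot)$ distributes over the expansion of $n^{i-1}\otimes[n,m]m$ and verifying that all parasitic action terms vanish because $N$ is abelian. The bookkeeping of exponents and the careful invocation of part $(ii)$ inside the induction for $(i)$ is where an error is most likely to hide, so I would verify the $i=2$ case by hand explicitly before trusting the general collection.
\end{proof}
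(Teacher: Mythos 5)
Your proposal is correct in substance and, for part $(i)$, follows essentially the same route as the paper: induction on $i$, expansion via \eqref{eq:1.1.1} and \eqref{eq:1.1.2}, centrality of the correction term, and Pascal's identity. The differences are organizational but real. The paper proves $(i)$ first and obtains $(ii)$ as a one-line corollary (apply $(i)$ with $m$ replaced by $[n_1,m]$; the correction term dies because $[n,[n_1,m]]=1$), whereas you prove $(ii)$ by its own induction and feed it into $(i)$; both work, and your use of $(ii)$ at exponent $i-1$ inside the induction for $(i)$ is not circular --- indeed you could equally invoke the inductive hypothesis of $(i)$ itself with $m$ replaced by $[n,m]$. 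For $(iii)$ the paper does not induct at all: it cites \cite{DLT} for the fact that $N\otimes M$ has nilpotency class at most $2$ when $N$ is abelian, and then applies the standard power law for class-$2$ groups together with \eqref{eq:1.2.5}; your induction via \eqref{eq:1.2.6} re-derives that power law inline, which is more self-contained but longer. One point you flag as the main obstacle should be stated precisely, because abelianness of $N$ alone does not make the parasitic action terms vanish (e.g.\ ${}^{[n,m]}m\neq m$ in general): the key is that, by \eqref{eq:1.2.2}, the diagonal action of $[n,m]$ on any tensor equals conjugation by $n\otimes m$ inside $N\otimes M$, and that $n\otimes[n,m]$ is central, since $[\,n\otimes[n,m],\,a\otimes b\,]=[n,[n,m]]\otimes[a,b]=1$ by \eqref{eq:1.2.5}, using $[n,m]\in N$ (normality) and $[n,[n,m]]=1$ ($N$ abelian). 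This centrality is exactly the pivot of the paper's proof of $(i)$, and with it your collection argument and exponent count $\binom{i-1}{2}+\binom{i-1}{1}=\binom{i}{2}$ close without difficulty.
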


\begin{proof}
\begin{itemize}
\item[$(i)$] Note that ${n^2}\otimes m =\ ^n(n\otimes m)(n\otimes m)$, and $^n(n\otimes m) =n\otimes [n, m]m$. Thus ${n^2}\otimes m =(n\otimes [n, m]) \ ^{[n, m]}(n\otimes m)(n\otimes m) =(n\otimes [n, m])(n\otimes m)^2$. Let $i>2$, and assume the statement for $i-1$. Then ${n^{i}}\otimes m =\ ^n(n^{i-1}\otimes m)(n\otimes m)= \;^n{(n\otimes [n, m])^{{i-1}\choose{2}}} \ ^n{(n\otimes m)^{i-1}}(n\otimes m)$. Since $N$ is abelian, $[n, [n, m]]=1$. This implies that $\ ^n(n\otimes [n, m]) =n\otimes [n, m]$, and hence $n\otimes [n, m]$ belongs to the center of $N\otimes M$. Therefore 
\begin{align*}
{n^i}\otimes m&=(n\otimes [n, m])^{{i-1}\choose{2}}((n\otimes [n, m])(n\otimes m))^{i-1}(n\otimes m)
\\&=(n\otimes [n, m])^{{i-1}\choose{2}}(n\otimes [n, m])^{i-1}(n\otimes m)^{i-1}(n\otimes m)\\
&=(n\otimes [n, m])^{{i}\choose{2}}(n\otimes m)^i.
\end{align*}

\item[$(ii)$] This follows from $(i)$, since $[n, [n_1, m]]=1$.

\item[$(iii)$] Since $N$ is abelian, the nilpotency class of $N\otimes M$ is at most $2$ (cf. \cite{DLT}). Hence, 
\begin{align*}
((n\otimes m)(n_1\otimes m_1))^i &= [(n_1\otimes m_1), (n\otimes m)]^{{i}\choose{2}}(n\otimes m)^i(n_1\otimes m_1)^i\\
&=([n_1, m_1]\otimes [n, m])^{{i}\choose{2}}(n\otimes m)^i(n_1\otimes m_1)^i. 
\end{align*}
\end{itemize}
\end{proof}

\begin{lemma}\label{L:4.2}
Let $N, M$ be normal subgroups of a group $G$. $N$ be abelian. \begin{itemize}
\item[(i)] If $\e(N)$ is odd, then $\e(N\otimes M)\mid \e(N)$.
\item[(ii)] If $\e(N)$ is even, then $\e(N\otimes M)\mid 2\e(N)$. \end{itemize}
\end{lemma}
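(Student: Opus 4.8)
The plan is to exploit Lemma \ref{L:4.1}(iii), which already gives a clean expansion of the $i$th power of a product of two generators $n\otimes m$ of $N\otimes M$ under the standing hypothesis that $N$ is abelian. Since the generic generator of $N\otimes M$ is $n\otimes m$ with $n\in N$, $m\in M$, the exponent of $N\otimes M$ is controlled once I can bound the order of each such simple tensor together with the ``commutator correction'' term $[n_1,m_1]\otimes[n,m]$ that appears when multiplying two of them. So the first step is to take $i=\e(N)$ (call it $e$) and compute $(n\otimes m)^{e}$ from Lemma \ref{L:4.1}(i), namely
\begin{align*}
(n\otimes m)^{e}=(n\otimes[n,m])^{-\binom{e}{2}}\,(n^{e}\otimes m).
\end{align*}
Because $N$ is abelian of exponent $e$, we have $n^{e}=1$, so $n^{e}\otimes m=1$, leaving $(n\otimes m)^{e}=(n\otimes[n,m])^{-\binom{e}{2}}$. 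The whole problem therefore collapses to understanding the order of $n\otimes[n,m]$, a tensor whose second entry lies in $[N,M]\le N$ (here I use that $N$ is normal, so $[n,m]=\,^nm\cdot m^{-1}$ need not lie in $N$; the cleaner route is to observe $[n,m]=n\cdot{}^mn^{-1}\in N$ since $N$ is normal, placing $[n,m]\in N$).

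Next I would pin down the exponent of $n\otimes[n,m]$. By Lemma \ref{L:4.1}(ii) with $n_1=n$ we have $n^{e}\otimes[n,m]=(n\otimes[n,m])^{e}$, and again $n^{e}=1$ forces $(n\otimes[n,m])^{e}=1$. Thus every simple tensor satisfies $(n\otimes m)^{e}=(n\otimes[n,m])^{-\binom{e}{2}}$ with the right-hand factor of order dividing $e$. When $e$ is odd, $2\mid e-1$ so $e\mid\binom{e}{2}=\tfrac{e(e-1)}{2}$, whence $(n\otimes[n,m])^{-\binom{e}{2}}=1$ and $(n\otimes m)^{e}=1$; when $e$ is even, $e\mid\binom{e}{2}$ fails in general (one only gets $2e\mid e(e-1)$ after multiplying by $2$), so I instead raise to the power $2e$ and use $e\mid e(e-1)$ to kill the correction term, giving $(n\otimes m)^{2e}=1$. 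This parity split is exactly the source of the two cases (i) and (ii) in the statement.

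Finally I must pass from simple tensors to arbitrary elements of $N\otimes M$, and this is the step I expect to be the real obstacle, since $N\otimes M$ is only class $2$ and products of generators do not simply multiply exponents. The tool is Lemma \ref{L:4.1}(iii): raising a product $(n\otimes m)(n_1\otimes m_1)$ to the power $e$ (resp.\ $2e$) produces the correction term $([n_1,m_1]\otimes[n,m])^{\binom{e}{2}}$ times the $e$th (resp.\ $2e$th) powers of each factor. The per-factor powers vanish by the simple-tensor case just established, and the correction term is itself a simple tensor whose first entry lies in $[N,M]\le N$, so it is killed by the same parity argument applied once more (indeed $[n_1,m_1]\otimes[n,m]$ has order dividing $e$ by Lemma \ref{L:4.1}(ii)). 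An induction on the number of generators in a word, feeding the two-generator identity repeatedly and absorbing the centrally-located correction terms, then shows $x^{e}=1$ for odd $e$ and $x^{2e}=1$ for even $e$ for every $x\in N\otimes M$, which is precisely $\e(N\otimes M)\mid\e(N)$ and $\e(N\otimes M)\mid2\e(N)$ respectively. The only delicate bookkeeping is confirming that all correction terms are central (class $2$) so that they can be collected without further commutators, but that is guaranteed by $N$ being abelian.
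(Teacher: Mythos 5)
Your proposal is correct and follows essentially the same route as the paper: both arguments use Lemma \ref{L:4.1}(i)--(ii) to kill the power of a simple tensor (exploiting $e \mid \binom{e}{2}$ for odd $e$, and passing to $2e$ in the even case), and then Lemma \ref{L:4.1}(iii) together with the class-$2$ structure of $N\otimes M$ to handle products of tensors. The only cosmetic difference is that you rearrange the identity to isolate $(n\otimes m)^e$ as a power of the central correction term $n\otimes[n,m]$, whereas the paper transfers the binomial exponent onto the first entry via Lemma \ref{L:4.1}(ii) and kills it as $n^{\binom{e}{2}}\otimes[n,m]=1$; these are the same computation.
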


\begin{proof}
Let $\e(N) =e$. $n_i\in N$, $m_i \in M$, $i = 1, 2$.
\begin{itemize}
\item[$(i)$] By Lemma \ref{L:4.1} $(i)$, we obtain ${n_1^e}\otimes m_1 =(n_1\otimes [n_1, m_1])^{{e}\choose{2}}(n_1\otimes m_1)^e =(n_1^{{e}\choose{2}}\otimes [n_1, m_1])(n_1\otimes m_1)^e$.
Since $e$ is odd, it follows that $e\mid {{e}\choose{2}}$, whence $(n_1\otimes m_1)^e=1$. By Lemma \ref{L:4.1}$(iii)$, we obtain
$((n_1\otimes m_1)(n_2\otimes m_2))^e =([n_2, m_2]\otimes [n_1, m_1])^{{e}\choose{2}}(n_1\otimes m_1)^e(n_2\otimes m_2)^e =1$, which proves $\e(N\otimes M)\mid e$.

\item[$(ii)$] By Lemma \ref{L:4.1} $(i)$ and $(ii)$, we have\\
${n_1^{2e}}\otimes m_1 =(n_1\otimes [n_1, m_1])^{{2e}\choose{2}}(n_1\otimes m_1)^{2e} =(n_1^{{2e}\choose{2}}\otimes [n_1, m_1])(n_1\otimes m_1)^{2e}$.\\
Since $e\mid {{2e}\choose{2}}$, it follows that $(n_1\otimes m_1)^{2e}=1$. By Lemma \ref{L:4.1} $(iii)$ and $(ii)$, we obtain
\begin{align*}
(n_1\otimes m_1)(n_2\otimes m_2))^{2e}=&([n_2, m_2]\otimes [n_1, m_1])^{{2e}\choose{2}}(n_1\otimes m_1)^{2e}(n_2\otimes m_2)^{2e}\\
=&([n_1, m_1]^{{2e}\choose{2}}\otimes [n, m])(n\otimes m)^{2e}(n_1\otimes m_1)^{2e} =1.
\end{align*}
Thus $\e(N\otimes M)\mid 2\e(N)$.
\end{itemize}
\end{proof}

Recall that a $p$-group $G$ is said to be powerful if $\gamma_2(G)\subset G^p$, when $p$ is odd and $\gamma_2(G)\subset G^4$, for $p=2$.

\begin{lemma}\label{L:4.3}
Let $G$ be a finite $p$-group. If $G$ is powerful, then the subgroup $G^p$ of $G$ is the set of all $p$th-powers of elements of $G$ and $G^p$ is powerful.
\end{lemma}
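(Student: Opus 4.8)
The plan is to base everything on the single containment $[G^p,G]\subseteq G^{p^2}$, which expresses that $G^p$ is \emph{powerfully embedded} in $G$; I would establish this first and then read off both conclusions. Here $G^{p^{i+1}}$ denotes $(G^{p^i})^p$, and I take $p$ odd, so that powerfulness reads $\gamma_2(G)\subseteq G^p$ (the prime $2$ uses $\gamma_2(G)\subseteq G^4$ and the same bookkeeping with one extra power). To prove $[G^p,G]\subseteq G^{p^2}$ I would expand a generator $[g^p,h]$ by iterating \eqref{eq:1.11.1} and \eqref{eq:1.11.4} and collecting as in Lemma \ref{L:10}, obtaining $[g^p,h]=[g,h]^p\,c$ with $c\in\gamma_3(G)$, the intermediate terms each carrying a factor of $p$. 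Since $G$ is powerful, $[g,h]\in G^p$ and hence $[g,h]^p\in G^{p^2}$, while an induction on the nilpotency class (passing to $G/\gamma_c(G)$) places the correction $c\in\gamma_3(G)=[\gamma_2(G),G]$ inside $G^{p^2}$ as well. Repeating the computation yields the nested chain $[G^{p^i},G]\subseteq G^{p^{i+1}}$, and therefore $\gamma_j(G)\subseteq G^{p^{j-1}}$ for every $j$.

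The second assertion is then immediate: $[G^p,G^p]\subseteq[G^p,G]\subseteq G^{p^2}=(G^p)^p$, which is exactly the powerfulness of $G^p$. For the first assertion, write $P=\{g^p:g\in G\}$; since $P\subseteq G^p$ and $P$ generates $G^p$, it is enough to show $P=G^p$. The decisive preliminary step is that the $p$-th power map is a homomorphism modulo $G^{p^2}$: applying the collection formula \eqref{eq:1.5.1} to $a^pb^p$ writes it as $(ab)^p$ times factors $c_i^{\binom{p}{i}}$ with $c_i\in\gamma_i(G)$, and these all lie in $G^{p^2}$, since for $2\le i\le p-1$ the coefficient $\binom{p}{i}$ is divisible by $p$ (yielding $p$-th powers of elements of $\gamma_2(G)\subseteq G^p$), while the lone term $c_p\in\gamma_p(G)\subseteq G^{p^{p-1}}\subseteq G^{p^2}$ by the chain above. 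Thus $a^pb^p\equiv(ab)^p\pmod{G^{p^2}}$, and consequently $G^p=P\cdot G^{p^2}$.

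To upgrade $G^p=P\cdot G^{p^2}$ to $G^p=P$ I would run a terminating descent using finiteness of $G$. Given $a,b\in G$, the congruence above gives $a^pb^p=(ab)^p z_0$ with $z_0\in G^{p^2}$; by induction on $|G|$ applied to the proper powerful subgroup $G^p$ one has $G^{p^{k+1}}=\{u^p:u\in G^{p^k}\}$, so $z_0=u_1^p$ with $u_1\in G^p$. Multiplying $(ab)^pu_1^p$ and collecting again, the corrections now involve $[ab,u_1]\in[G,G^p]\subseteq G^{p^2}$ and hence land in $G^{p^3}$, so $a^pb^p=(ab\,u_1)^p z_1$ with $z_1\in G^{p^3}$. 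Iterating, one produces $a^pb^p=c_k^p z_k$ with $c_k\in G$ and $z_k\in G^{p^{k+2}}$, and since $G$ is finite $G^{p^m}=1$ for some $m$, whence $a^pb^p=c_m^p\in P$. This shows $P$ is closed under products; as it is visibly closed under inverses, $P=G^p$.

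The main obstacle throughout is the term $c_p\in\gamma_p(G)$ in \eqref{eq:1.5.1}, whose coefficient $\binom{p}{p}=1$ is not divisible by $p$; it is precisely the nested containment $\gamma_p(G)\subseteq G^{p^{p-1}}\subseteq G^{p^2}$, valid only because $p\ge 3$, that neutralizes it. This is why the bulk of the effort goes into the powerfully-embedded step $[G^p,G]\subseteq G^{p^2}$ and its iterates, and also why the prime $2$, where $p-1=1$, must be handled separately through the stronger hypothesis $\gamma_2(G)\subseteq G^4$.
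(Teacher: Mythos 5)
The paper does not prove Lemma \ref{L:4.3} at all: it is the classical Lubotzky--Mann theorem on powerful $p$-groups, recorded as a known fact, so your attempt has to be judged against the standard argument rather than against anything in the text. Your second and third paragraphs essentially reproduce that standard argument and are sound \emph{conditionally}: granted the chain $[G^{p^i},G]\subseteq G^{p^{i+1}}$, the congruence $a^pb^p\equiv(ab)^p\pmod{G^{p^2}}$, the identification $G^{p^{k+1}}=\{u^p:u\in G^{p^k}\}$ via induction on $|G|$, and the terminating descent do show that the set of $p$-th powers is a subgroup, hence equals $G^p$; and $[G^p,G^p]\subseteq[G^p,G]\subseteq G^{p^2}=(G^p)^p$ correctly yields powerfulness of $G^p$.

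The genuine gap is in your first paragraph, the powerfully-embedded step $[G^p,G]\subseteq G^{p^2}$, and it is twofold. First, circularity: by your own account (last paragraph), the weight-$p$ correction term is neutralized by $\gamma_p(G)\subseteq G^{p^{p-1}}\subseteq G^{p^2}$, but that containment is obtained by iterating $[G^{p^i},G]\subseteq G^{p^{i+1}}$, i.e.\ by ``repeating the computation'' whose first instance you are still in the middle of proving; it is not available at that point. Second, the induction on nilpotency class does not close as described: applying the inductive hypothesis to $G/\gamma_c(G)$ yields only $[G^p,G]\subseteq G^{p^2}\gamma_c(G)$, which does not ``place $c\in\gamma_3(G)$ inside $G^{p^2}$''. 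To delete the residue $\gamma_c(G)$ one needs the absorption lemma for nilpotent groups (if $H,K\unlhd G$ and $H\le K[H,G]$, then $H\le K$, proved by iterating inside $G/K$), applied with $H=[G^p,G]$, $K=G^{p^2}$, together with $\gamma_c(G)=[\gamma_{c-1}(G),G]\le[[G^p,G],G]$; this last containment needs $\gamma_{c-1}(G)\le\gamma_3(G)\le[G^p,G]$ and so holds only for $c\ge 4$. The class-$3$ case must then be done by hand, via $[g^p,h]=[g,h]^p[g,g,h]^{\binom{p}{2}}$, where $p\mid\binom{p}{2}$ and both $[g,h]$ and $[g,g,h]$ lie in $\gamma_2(G)\subseteq G^p$, so both factors lie in $(G^p)^p=G^{p^2}$. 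Neither the absorption lemma nor this base case appears in your proposal, and without them the inductive step is incomplete; once $[G^p,G]\subseteq G^{p^2}$ is honestly established (by this corrected induction, or by the Lubotzky--Mann reduction to the case $(G^p)^p[G^p,G,G]=1$, which rests on the same absorption lemma), the chain follows and the remainder of your argument goes through.
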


The following Lemma can be found in \cite{G.E.1}.

\begin{lemma}\label{L:4.4}
Let $N, M$ be normal subgroups of a group $G$. If $M\subset N$, then we have the exact sequence
$M\wedge G\rightarrow N\wedge G \rightarrow \frac{N}{M}\wedge \frac{G}{N} \rightarrow 1.$
\end{lemma}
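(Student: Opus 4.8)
The plan is to realize both arrows as homomorphisms induced by functoriality of the exterior product, and then to identify the kernel of the second arrow with the image of the first by a generators-and-relations computation.

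I would first construct the maps. Since $M, N \trianglelefteq G$, every action that occurs is a restriction of conjugation inside $G$, so compatibility in the sense of Definition \ref{D:1.0} is automatic. The inclusion $M \hookrightarrow N$ paired with $\id_G$ therefore sends a generator $m \wedge g$ to $m \wedge g$ (now read in $N \wedge G$) while respecting the defining relations \eqref{eq:1.1.1} and \eqref{eq:1.1.2}; this gives $\alpha\colon M \wedge G \to N \wedge G$. Likewise the canonical projections form a morphism of compatible pairs---normality of $M$ and $N$ is exactly what makes the induced actions well defined---so $n \wedge g \mapsto \bar n \wedge \bar g$ respects all relations and defines $\beta\colon N \wedge G \to \tfrac{N}{M} \wedge \tfrac{G}{N}$. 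Surjectivity of $\beta$ is immediate, since the target is generated by the elements $\bar n \wedge \bar g = \beta(n \wedge g)$, and the inclusion $\operatorname{im}\alpha \subseteq \ker\beta$ is clear because each generator $m \wedge g$ of $\operatorname{im}\alpha$ has trivial image under the projection of the first coordinate.

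The content of the lemma is the reverse inclusion $\ker\beta \subseteq \operatorname{im}\alpha$, and this is the step I expect to be the main obstacle. Before starting, I would record that $\operatorname{im}\alpha$ is a normal subgroup of $N \wedge G$: by the crossed-module rule of Proposition \ref{P:2}(ii), conjugation in $N \wedge G$ is realized through $\lambda$ by conjugation in $G$, and since $M \trianglelefteq G$ this carries a generator $m \wedge g$ to an element $\,{}^{\lambda}m \wedge {}^{\lambda}g\,$ of the same form. One may therefore form $Q = (N \wedge G)/\operatorname{im}\alpha$ together with the surjection $\bar\beta\colon Q \to \tfrac{N}{M}\wedge\tfrac{G}{N}$ induced by $\beta$.

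To prove $\bar\beta$ is an isomorphism I would construct an inverse by lifting, sending $\bar n \wedge \bar g$ to the class of $n \wedge g$ in $Q$. The crux is that this assignment is independent of the chosen lifts and respects the relations of the target: replacing a lift by an admissible multiple and expanding via \eqref{eq:1.1.1} and \eqref{eq:1.1.2} peels off factors that lie in $\operatorname{im}\alpha$, while the residual conjugation ambiguities are again absorbed into $\operatorname{im}\alpha$ using $[M,G] \subseteq M$ together with its normality established above. Verifying that every such relation, and in particular every diagonal relation of the target, collapses into $\operatorname{im}\alpha$---so that the lifting assignment is a well-defined exterior pairing and hence factors through $\tfrac{N}{M}\wedge\tfrac{G}{N}$---is the delicate bookkeeping at the heart of the argument. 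Once it is in place, the resulting map inverts $\bar\beta$ on generators, yielding $\ker\beta = \operatorname{im}\alpha$ and completing the proof of exactness.
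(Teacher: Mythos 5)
The paper offers no proof of this lemma at all---it is quoted from \cite{G.E.1}---so your attempt must be judged on its own merits, and it has two genuine gaps. The first: you have taken the third term literally as $\frac{N}{M}\wedge\frac{G}{N}$. That is a typo in the paper, as every subsequent application (Theorems \ref{th:4.5}, \ref{T:4.9}, \ref{T:4.10}) shows: the sequence actually used there has third term $\frac{N}{M}\wedge\frac{G}{M}$, and with $\frac{G}{N}$ the statement is false. Indeed, take $M=1$ and $N=G=S_3$: the sequence would read $1\rightarrow S_3\wedge S_3\rightarrow S_3\wedge 1=1$, forcing $S_3\wedge S_3=1$, whereas $S_3\wedge S_3\cong\gamma_2(S_3)\cong\mathbb{Z}/3\mathbb{Z}$ because the Schur multiplier of $S_3$ is trivial. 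Your assertion that normality of $M$ and $N$ makes the induced actions well defined also breaks down for this target: the conjugation action of $\frac{G}{N}$ on $\frac{N}{M}$ is well defined only when $[N,N]\subseteq M$, which is not assumed. So no amount of bookkeeping can complete the proof as you have set it up; the statement must first be corrected to $\frac{N}{M}\wedge\frac{G}{M}$.

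Even after that correction, the second gap remains and is the decisive one: the step you defer as ``the delicate bookkeeping at the heart of the argument'' is exactly the content of the lemma, and the tools you propose for it ($[M,G]\subseteq M$ and normality of $\operatorname{im}\alpha$) cannot carry it out. When you change a lift of $\bar g\in G/M$ by an element $m'\in M$, relation \eqref{eq:1.1.2} peels off the factor ${}^{g}n\wedge{}^{g}m'$, whose \emph{first} coordinate ranges over all of $N$, not $M$; this is not a generator of $\operatorname{im}\alpha$, and nothing you have established places it inside $\operatorname{im}\alpha$. The missing idea is the antisymmetry identity in the relative exterior product: expanding $ab\wedge ab=1$ for $a,b\in N$ via \eqref{eq:1.1.1} and \eqref{eq:1.1.2} and discarding the diagonal terms yields $a\wedge b=(b\wedge a)^{-1}$ in $N\wedge G$, whence ${}^{g}n\wedge{}^{g}m'=\left({}^{g}m'\wedge{}^{g}n\right)^{-1}\in\operatorname{im}\alpha$ because ${}^{g}m'\in M$. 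This identity is precisely what distinguishes the exterior sequence from the tensor one---compare Lemma \ref{L:6.1}, whose tensor-product analogue needs the extra hypothesis $N\subseteq\gamma_2(G)$---and without it your lifting map is not well defined, so the inclusion $\ker\beta\subseteq\operatorname{im}\alpha$ never materializes. The remainder of your outline (construction of $\alpha$ and $\beta$, surjectivity of $\beta$, $\operatorname{im}\alpha\subseteq\ker\beta$, and normality of $\operatorname{im}\alpha$ via Proposition \ref{P:2}(ii)) is correct but routine; with the corrected target and the antisymmetry identity supplied, your lifting strategy does go through.
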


In \cite{L.M}, the authors prove the conjecture for powerful groups. The authors of \cite{MHM} prove that if $N$ is powerfully embedded in $G$, then the $\e(\m(G,N))\mid \e(N)$. In the theorem below, we generalize both these results.

\begin{theorem}\label{th:4.5}
Let $p$ be an odd prime and $N$ be a normal subgroup of a finite $p$-group $G$. If $N$ is powerful, then $exp(N\wedge G)\mid exp(N)$.
\end{theorem}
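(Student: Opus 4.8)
The plan is to induct on $\e(N)=p^n$, peeling off the powerful subgroup $N^p$ at each step by means of the exact sequence of Lemma \ref{L:4.4}. The engine of the whole argument is the following special case, which also serves as the base case: if $A$ is an \emph{abelian} normal subgroup of odd exponent in an arbitrary group $\Gamma$, then $\e(A\wedge \Gamma)\mid \e(A)$. Indeed, $A\wedge \Gamma$ is a quotient of $A\otimes \Gamma$, and applying Lemma \ref{L:4.2}$(i)$ to the abelian normal subgroup $A$ and the (normal) subgroup $M=\Gamma$ of $\Gamma$ gives $\e(A\otimes \Gamma)\mid \e(A)$. In particular the case $n\le 1$ is immediate: a powerful group of exponent dividing $p$ satisfies $\gamma_2(N)\subseteq N^p=1$, hence is abelian, and the sub-claim applies (recall $p$ is odd, so $\e(N)$ is odd).

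For the inductive step assume $\e(N)=p^n$ with $n\ge 2$ and that the result holds for powerful normal subgroups of smaller exponent. By Lemma \ref{L:4.3} the subgroup $N^p$ consists of the $p$th powers of elements of $N$ and is again powerful; it is characteristic in $N$, hence normal in $G$, and $(x^p)^{p^{n-1}}=x^{p^n}=1$ shows $\e(N^p)\le p^{n-1}<p^n$. The induction hypothesis therefore yields $\e(N^p\wedge G)\mid p^{n-1}$. Feeding $M=N^p\subset N$ into Lemma \ref{L:4.4} produces the exact sequence
\begin{align*}
N^p\wedge G\longrightarrow N\wedge G\longrightarrow \tfrac{N}{N^p}\wedge \tfrac{G}{N}\longrightarrow 1,
\end{align*}
and, exactly as in the proof of Theorem \ref{T:3}, an exact sequence $A\to B\to C\to 1$ of (possibly nonabelian) groups forces $\e(B)\mid \e(\mathrm{im}\,A)\,\e(C)$, since $b^{\e(C)}\in\mathrm{im}\,A$ for every $b\in B$. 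Hence $\e(N\wedge G)\mid p^{n-1}\cdot \e\big(\tfrac{N}{N^p}\wedge \tfrac{G}{N}\big)$.

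It remains to show the quotient term has exponent dividing $p$, and this is the one delicate point. Because $N$ is powerful, $\gamma_2(N)\subseteq N^p$, so $\tfrac{N}{N^p}$ is abelian of exponent $p$; however $\tfrac{N}{N^p}$ and $\tfrac{G}{N}$ are not visibly normal subgroups of one common group, so Lemma \ref{L:4.2} cannot be quoted verbatim. The remedy is to work inside $\overline{G}=G/N^p$, in which $\overline{N}=N/N^p$ is an abelian normal subgroup of odd exponent $p$; the sub-claim gives $\e\big(\overline{N}\wedge \overline{G}\big)\mid p$. Since $\gamma_2(N)\subseteq N^p$, the conjugation action of $G$ on $N/N^p$ factors through $G/N$, so the quotient map $\overline{G}\to G/N$ together with the identity on $\overline{N}$ is a morphism of compatible actions; by functoriality of the nonabelian exterior product it induces a surjection $\overline{N}\wedge\overline{G}\twoheadrightarrow \tfrac{N}{N^p}\wedge\tfrac{G}{N}$. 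Consequently $\e\big(\tfrac{N}{N^p}\wedge\tfrac{G}{N}\big)\mid p$, and combining with the previous paragraph gives $\e(N\wedge G)\mid p^{n-1}\cdot p=p^n=\e(N)$, completing the induction. I expect the main obstacle to be precisely this last step: reconciling the mixed subobject/quotient nature of $\tfrac{N}{N^p}\wedge\tfrac{G}{N}$ with the ambient-group hypothesis of Lemma \ref{L:4.2}, which the passage to $G/N^p$ and functoriality resolve.
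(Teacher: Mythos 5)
Your proof is correct and follows essentially the same route as the paper: induction on $\e(N)$, Lemma \ref{L:4.3} to get that $N^p$ is powerful of exponent $p^{e-1}$, the exact sequence of Lemma \ref{L:4.4} with $M=N^p$, and the abelian case via Lemma \ref{L:4.2}$(i)$ serving both as base case and to bound the quotient term. One remark: you misquote Lemma \ref{L:4.4} --- its third term is $\frac{N}{N^p}\wedge\frac{G}{N^p}$, not $\frac{N}{N^p}\wedge\frac{G}{N}$ --- so the ``delicate point'' of your final paragraph is a phantom: the group you actually need to bound is exactly $\overline{N}\wedge\overline{G}$, which your sub-claim handles directly, and the functoriality detour can simply be deleted.
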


\begin{proof}
Let $\e(N) =p^e$. We will proceed by induction on $e$. If $\e(N) =p$, then $\gamma_2(N)\subset N^p =1$. So by Lemma \ref{L:4.2} $(i)$, $\e(N\wedge G)\mid \e(N)$. Let $\e(N) =p^e, e>1$. Using Lemma \ref{L:4.4}, consider the exact sequence
$$N^p\wedge G\rightarrow N\wedge G \rightarrow \frac{N}{N^p}\wedge \frac{G}{N^p} \rightarrow 1.$$ Then $\e(N\wedge G)\mid \e(im(N^p\wedge G))exp(\frac{N}{N^p}\wedge \frac{G}{N^p})$. By Lemma \ref{L:4.3}, $N^p$ is powerful and $\e(N^p) =p^{e-1}$. Thus by induction hypothesis, we have $\e(N^p\wedge G)\mid \e(N^p)$, and hence $\e(im(N^p\wedge G))\mid \e(N^p)$. Note that $\frac{N}{N^p}$ is a powerful group of exponent $p$, and we have showed above that the theorem holds for powerful groups of exponent $p$. So $\e(\frac{N}{N^p}\wedge \frac{G}{N^p})\mid p$ and hence we have $\e(N\wedge G)\mid p^e$.\\
\end{proof}

P. Moravec in (\cite{P.M.3}) proved that if the nilpotency class is at most $p-2$, then $\e(G\wedge G)\mid \e(G)$. The authors of \cite{H.M.M} prove the conjecture for class $p-1$. In the Theorem below, we extend both these results by proving it for exterior square.

\begin{theorem} \label{T:4.8}
Let $p$ be an odd prime and $G$ be a finite $p$-group. If the nilpotency class of $G$ is at most $p-1$, then $\e(G\wedge G)\mid \e(G)$. In particular, $\e(\M)|\e(G)$.\end{theorem}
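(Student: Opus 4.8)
The plan is to exploit the fact that a $p$-group of class at most $p-1$ is regular (P.\ Hall), and to run an induction on the exponent that parallels the proof of Theorem~\ref{th:4.5}, with ``regular'' in place of ``powerful'' and with Hall--Petrescu type expansions inside $G\wedge G$ in place of the powering lemmas of the powerful case. Writing $\e(G)=p^{e}$, I would first record, for a group of class at most $p-1$, the expansions of $g^{p^{e}}\wedge h$ and of $((g_1\wedge h_1)(g_2\wedge h_2))^{n}$, i.e.\ the general-class analogues of Theorems~\ref{T:1} and~\ref{T:2}; these follow from \eqref{eq:1.5.1} together with the weight estimates of Lemma~\ref{L:3}, exactly as in the class $5$ case. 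Because of the analogue of Theorem~\ref{T:2}, it then suffices throughout to bound the exponent of a single simple exterior $g\wedge h$.

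The base case is $e=1$, where $G$ has exponent $p$ and the goal is $(g\wedge h)^{p}=1$. Expanding and using $g^{p}=1$ gives $1=g^{p}\wedge h=\big(\prod_{k}(g\wedge c_{k})^{\binom{p}{k}}\big)(g\wedge h)^{p}$, where each $c_{k}$ is an iterated commutator of weight $k$ with $2\le k\le p-1$ and the product also absorbs the cross terms $[g,h]\wedge(\cdots)$. Every coefficient $\binom{p}{k}$ with $2\le k\le p-1$ is divisible by $p$; pulling this power inside the exterior by identities of the type Lemma~\ref{L:4}$(i)$, $(ii)$ (valid since each $c_{k}$ has weight $\ge 2$) and invoking $c_{k}^{\,p}=1$ kills the term. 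I would argue by descending induction on weight: the top-weight commutators are central and absorb directly, while pulling the power into a lower-weight term produces only strictly deeper corrections, already shown trivial. The crucial structural input is that class at most $p-1$ leaves \emph{no} commutator of weight $p$ in the expansion, so no coefficient $\binom{p}{p}=1$ (coprime to $p$) ever appears; this is precisely the obstruction that forces the delicate substitution argument in the class $5$, $p\le 5$ situation, and it is absent here. Hence $(g\wedge h)^{p}=1$ and, via the analogue of Theorem~\ref{T:2}, $\e(G\wedge G)\mid p$.

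For the inductive step ($e>1$) I would use the exact sequence of Lemma~\ref{L:4.4} with $M=G^{p}\subseteq N=G$,
\[
G^{p}\wedge G\longrightarrow G\wedge G\longrightarrow \tfrac{G}{G^{p}}\wedge \tfrac{G}{G^{p}}\longrightarrow 1,
\]
so that $\e(G\wedge G)\mid \e(\operatorname{im}(G^{p}\wedge G))\,\e\big(\tfrac{G}{G^{p}}\wedge\tfrac{G}{G^{p}}\big)$. Since $G$ is regular, $G/G^{p}$ has exponent $p$ and class at most $p-1$, so the base case forces the second factor to divide $p$. For the first factor I would prove the analogue of Lemma~\ref{L:9}: using regularity (so that $g^{p}$ is a genuine $p$th power) together with the same expansions, one gets $(g^{p}\wedge h)^{p^{e-1}}=1$, and the product relation then gives $\e(\operatorname{im}(G^{p}\wedge G))\mid p^{e-1}$. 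Multiplying yields $\e(G\wedge G)\mid p^{e-1}\cdot p=p^{e}=\e(G)$, and since $\M=\ker\kappa\subseteq G\wedge G$ the same bound passes to $\e(\M)$.

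The main obstacle is the base case, and more precisely the bookkeeping that shows the $p$-divisible coefficients can be pushed inside the exterior product through every weight level; establishing the general-class versions of Theorems~\ref{T:1}, \ref{T:2} and Lemma~\ref{L:9} is the technical cost, but each is a direct extension of the class $5$ computations already carried out, now simplified by the hypothesis that removes the single weight-$p$ obstruction.
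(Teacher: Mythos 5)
Your proposal is correct in outline, but it follows a genuinely different --- and much heavier --- route than the paper, whose proof contains no wedge calculus at all. The paper passes to a covering group $H$ of $G$: since $\e(G)=p^{e}$ and $\gamma_{p}(G)=1$, the group $H$ is $p^{e}$-central of class at most $p$; each subgroup $\langle\, {}^{b}a, a\rangle=\langle [b,a],a\rangle$ has class at most $p-1$, hence is regular, hence (being $p^{e}$-central) $p^{e}$-abelian, so that $1=[b,a^{p^{e}}]=({}^{b}a)^{p^{e}}(a^{-1})^{p^{e}}=[b,a]^{p^{e}}$; then $\gamma_{2}(H)$, again of class at most $p-1$ and generated by these commutators, has exponent dividing $p^{e}$, and one concludes from $G\wedge G\cong\gamma_{2}(H)$. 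Your alternative, generalizing the Section 3 expansions, rests on the correct structural observation that for class at most $p-1$ every binomial coefficient occurring in the analogues of Theorems \ref{T:1} and \ref{T:2} has index at most $p-1$, hence is divisible by $\e(G)$, so the weight-$p$ obstruction that forces the substitution argument in the class-$5$, $p=5$ case never appears; granting the (asserted rather than proved, and genuinely laborious) general-class versions of Theorems \ref{T:1}, \ref{T:2} and Lemma \ref{L:4}, your exponent-$p$ base case does close by descending induction on weight. The weak point is your inductive step: an analogue of Lemma \ref{L:9} does not follow from ``the same expansions'' alone --- in the class-$5$ case that lemma rests on the long computations of Lemmas \ref{L:7} and \ref{L:8}, and in your setting one would further need Hall's regular-group property (for instance, that $[a^{p^{k}},b]=1$ forces $[a,b]^{p^{k}}=1$) to control the orders of commutators involving $g^{p}$. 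That step, however, can be bypassed entirely within the paper's own toolkit, exactly as the paper does in Theorems \ref{T:4.9} and \ref{T:4.10}: by Theorem \ref{L:4.7}, regularity makes $G^{p}$ powerful with $\e(G^{p})=p^{e-1}$, and Theorem \ref{th:4.5} (whose proof is independent of Theorem \ref{T:4.8}) then gives $\e(G^{p}\wedge G)\mid p^{e-1}$ directly. With that substitution your argument is complete; what the covering-group trick buys the paper is total avoidance of the general-class bookkeeping, while your route has the merit of staying inside the wedge formalism and making visible exactly where the hypothesis on the class enters.
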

\begin{proof}
Let $\e(G) = p^e$ and let $H$ be a covering group of $G$. Then $H$ will be $p^e$-central and $H$ will have nilpotency class $\le p$. Let $a, b\in H$. The group $\langle \ ^b a, a\rangle$ = $\langle [b,a], a\rangle$ has nilpotency class $\le p-1$, hence is $p^e$-abelian. So, $1 = [b, a^{p^e}] = (\ ^b a)^{p^e} (a^{-1})^{p^e} = (\ ^b a a^{-1})^{p^e} = [b, a]^{p^e}$. Also $\gamma_2(H)$ will have nilpotency class $\le p-1$, hence $\gamma_2 (H)$ is $p^e$-abelian. Therefore, $\e(\gamma_2(H))\mid p^e$ and the lemma follows since $G\wedge G\cong \gamma_2(H)$. 
\end{proof}

\begin{definition}\label{D.4.1}
Let $G$ be a finite $p$-group.
\begin{itemize}
\item[$(i)$] We say $G$ satisfies condition $(1)$ if $\gamma_m(G)\subset G^p$, for some $m=2, 3, \dots, p-1$. $G$ is said to be a potent group if $m=p-1$.
\item[$(ii)$] We say $G$ satisfies condition $(2)$ if $\gamma_p(G)\subset G^{p^2}$.
\end{itemize}
\end{definition}

The next Theorem can be found in \cite{SZ} and \cite{LEW}.

\begin{theorem}\label{L:4.7}
Let $G$ be a finite $p$-group. 

\begin{itemize}
\item[$(i)$] If $G$ regular, then $G^p$ is the set of all $pth$ powers of elements of $G$ and $G^p$ is powerful.
\item[$(ii)$] If $G$ satisfies condition $(1)$, then $G^p$ is the set of all $p$th powers of elements of $G$ and $G^p$ is powerful.
\item[$(iii)$] If $G$ satisfies condition $(2)$, $G^p$ is the set of all $p$th powers of elements of $G$ and $G^p$ is powerful.
\end{itemize}

\end{theorem}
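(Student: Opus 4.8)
The plan is to prove, for each of the three hypotheses, the two assertions separately: first that the set of $p$th powers $\{g^{p}:g\in G\}$ is closed under multiplication and hence coincides with the subgroup $G^{p}$, and second that $G^{p}$ is powerful, i.e. that $\gamma_2(G^{p})\subseteq (G^{p})^{p}=G^{p^{2}}$. The common tool for all three parts is the Hall--Petrescu collection formula, which writes $x^{p}y^{p}(xy)^{-p}$ as a product of factors $c_i^{\binom{p}{i}}$ with $c_i\in\gamma_i(\langle x,y\rangle)$, together with the elementary fact that $p\mid\binom{p}{i}$ for $1<i<p$. It is exactly the weight range governed by each hypothesis that makes the argument close up, so I would organise the proof around which commutator weights are forced into $G^{p}$ or $G^{p^{2}}$.

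For part $(i)$ I would appeal to Philip Hall's classical theory of regular $p$-groups. By the definition of regularity, $x^{p}y^{p}(xy)^{-p}$ lies in the subgroup generated by the $p$th powers of elements of $\gamma_2(\langle x,y\rangle)$; since that derived subgroup is properly contained in $G$, an induction on $|G|$ collapses a product of two $p$th powers to a single $p$th power, giving the first assertion. For powerfulness I would use Hall's commutator--power estimate for regular groups, namely $[G^{p},G^{p}]\subseteq G^{p^{2}}$; as subgroups of regular groups are again regular, $(G^{p})^{p}=G^{p^{2}}$, and $\gamma_2(G^{p})\subseteq G^{p^{2}}=(G^{p})^{p}$ is precisely powerfulness.

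For parts $(ii)$ and $(iii)$ I would follow the arguments of \cite{SZ} and \cite{LEW}. Here one runs an induction (equivalently, works modulo $G^{p^{2}}$) and inserts the hypothesis into the collection formula. Under condition $(1)$, the inclusion $\gamma_m(G)\subseteq G^{p}$ with $m\leq p-1$ throws every commutator of weight $\geq m$ occurring in the expansions of $x^{p}y^{p}(xy)^{-p}$ and of $[x^{p},y^{p}]$ into $G^{p}$, after which a second collection drives the resulting products into $G^{p^{2}}$; under condition $(2)$, the inclusion $\gamma_p(G)\subseteq G^{p^{2}}$ is designed exactly to absorb the single troublesome commutator of weight $p$. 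In either case one concludes $x^{p}y^{p}\equiv (xy)^{p}$ and $[x^{p},y^{p}]\equiv 1$ modulo $G^{p^{2}}$, which yields the two assertions.

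The hard part, and the reason the three hypotheses are phrased as they are, is the weight-$p$ term $c_p^{\binom{p}{p}}=c_p\in\gamma_p(\langle x,y\rangle)$ in the collection formula: its coefficient is $1$, so it escapes the divisibility $p\mid\binom{p}{i}$ that disposes of the intermediate terms. Regularity neutralises this term by definition, whereas conditions $(1)$ and $(2)$ must be invoked with care to push precisely this contribution into $G^{p^{2}}$. The genuinely technical step is therefore the bookkeeping that tracks which iterated commutators land in $\gamma_m(G)\subseteq G^{p}$ and then, after a further round of collection, in $G^{p^{2}}$ --- this is the computation carried out in the cited papers.
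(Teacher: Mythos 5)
The paper offers no proof of this theorem: it is quoted as a known result, the only justification being the sentence immediately preceding it, ``The next Theorem can be found in \cite{SZ} and \cite{LEW}.'' So there is no in-paper argument to compare yours against step by step; the relevant comparison is with those sources, and your outline matches them. Part (i) is classical regular-$p$-group theory of P.~Hall (closure of the set of $p$th powers by induction on $|G|$, plus the power-commutator estimate giving $[G^p,G^p]\le G^{p^2}=(G^p)^p$, using that subgroups of regular groups are regular), while parts (ii)--(iii) are exactly the Hall--Petrescu collection arguments of \cite{SZ} (condition $(1)$; the case $m=p-1$ is the potent case) and \cite{LEW} (condition $(2)$). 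Two caveats about your writeup read as a standalone proof. First, for the closure assertion the congruence $x^p y^p\equiv (xy)^p \pmod{G^{p^2}}$ is not by itself sufficient: one must convert $(xy)^p c$ with $c\in G^{p^2}$ into an exact $p$th power, which requires the induction (on $|G|$ or on a suitable filtration) that you allude to but do not carry out. Second, the ``bookkeeping'' you defer to the cited papers is the entire technical content of (ii) and (iii). Since the paper itself defers everything to the same citations, your proposal is faithful to how the result enters the paper, but it should be understood as a roadmap to the literature rather than an independent proof.
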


Groups satisfying condition ($1$) were studied by D. E. Arganbright in \cite{DEA}. This class includes the class of powerful $p$ groups for $p$ odd and potent $p$ groups. The groups satisfying condition ($2$) were studied by L. E. Wilson in \cite{LEW}. In the theorem below, we show that the conjecture is true for all these classes of groups.

\begin{theorem}\label{T:4.9}
Let $p$ be an odd prime and $G$ be a finite $p$-group satisfying either of the conditions below:
\begin{itemize}
\item[(i)] $\gamma_m(G)\subset G^{p}$ for some  $m$ with $2\leq m\leq p-1$.
\item[(ii)] $\gamma_p(G)\subset G^{p^2}$.
\end{itemize}
 Then $\e(G\wedge G)\mid \e(G)$, hence $\e(\M)\mid \e(G)$.
\end{theorem}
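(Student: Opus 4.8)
The plan is to reduce to the powerful case via the induction machinery already developed, exactly mirroring the proof of Theorem \ref{th:4.5}. The key structural input is Theorem \ref{L:4.7}, which tells us that in all three scenarios (regular, condition $(1)$, condition $(2)$) the subgroup $G^p$ consists precisely of $p$th powers and is itself powerful. This is what lets us peel off one layer of exponent at a time. Concretely, I would set $\e(G) = p^e$ and induct on $e$. The base case $e=1$ is where the hypotheses do real work; the inductive step is then essentially formal.

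First I would dispose of the base case $\e(G) = p$. If $G$ satisfies condition $(i)$, then $\gamma_m(G) \subset G^p = 1$, so $G$ is nilpotent of class at most $m-1 \le p-2$, and Theorem \ref{T:4.8} gives $\e(G \wedge G) \mid \e(G)$ immediately. If $G$ satisfies condition $(ii)$, then $\gamma_p(G) \subset G^{p^2} = 1$, so $G$ has class at most $p-1$, and again Theorem \ref{T:4.8} applies. Thus in either case the theorem holds whenever $\e(G) = p$.

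For the inductive step with $\e(G) = p^e$, $e > 1$, I would invoke Lemma \ref{L:4.4} with $M = G^p \subset N = G$ to obtain the exact sequence
\begin{align*}
G^p \wedge G \rightarrow G \wedge G \rightarrow \frac{G}{G^p} \wedge \frac{G}{G^p} \rightarrow 1,
\end{align*}
which yields $\e(G \wedge G) \mid \e(im(G^p \wedge G)) \, \e(\frac{G}{G^p} \wedge \frac{G}{G^p})$. By Theorem \ref{L:4.7}, $G^p$ is powerful of exponent $p^{e-1}$, so Theorem \ref{th:4.5} (applied with $N = G^p$, which is normal in $G$) gives $\e(G^p \wedge G) \mid \e(G^p) = p^{e-1}$, controlling the image term. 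The quotient $G/G^p$ has exponent $p$, so $\e(\frac{G}{G^p} \wedge \frac{G}{G^p}) \mid p$ by the result of \cite{P.M.1} for groups of exponent $p$ (or by the base case above). Combining, $\e(G \wedge G) \mid p^{e-1} \cdot p = p^e = \e(G)$, completing the induction.

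The main subtlety I expect is verifying that the induction is legitimate: one must check that the quotient $G/G^p$ is handled independently (it has exponent $p$, so no hypothesis beyond that is needed) and, more importantly, that applying Theorem \ref{th:4.5} to $G^p \wedge G$ is valid. Here $G^p$ is powerful \emph{and} normal in $G$, which is exactly the hypothesis of Theorem \ref{th:4.5}, so no regularity or condition $(1)$/$(2)$ hypothesis on $G^p$ itself is required — the powerful structure suffices once we are inside $G^p$. The delicate point worth stating carefully is that conditions $(1)$ and $(2)$ need only be used to establish that $G^p$ is powerful and to run the base case; they need not be inherited by $G^p$, which is what keeps the induction clean.
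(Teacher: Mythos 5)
Your overall strategy is the paper's own proof: the exact sequence $G^p\wedge G\rightarrow G\wedge G\rightarrow \frac{G}{G^p}\wedge\frac{G}{G^p}\rightarrow 1$ from Lemma \ref{L:4.4}, Theorem \ref{L:4.7} to see that $G^p$ is powerful of exponent $p^{e-1}$, Theorem \ref{th:4.5} applied to the powerful normal subgroup $G^p$ to bound $\e(im(G^p\wedge G))$, and Theorem \ref{T:4.8} for the small-exponent case. (Note that the induction on $e$ you set up is vacuous: your inductive step never invokes the inductive hypothesis, only Theorem \ref{th:4.5} and an exponent-$p$ statement, so your argument is really the paper's simple case split, not an induction.)

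There is, however, one genuine error in how you justify the quotient term. You claim $\e(\frac{G}{G^p}\wedge\frac{G}{G^p})\mid p$ ``by the result of \cite{P.M.1} for groups of exponent $p$,'' and you later assert that for the quotient ``no hypothesis beyond [exponent $p$] is needed.'' No such result exists: whether $\e(G\wedge G)\mid p$ holds for \emph{every} group of exponent $p$ is precisely the open problem that Theorem \ref{T:4.10} of this paper shows to be equivalent to the conjecture for regular $p$-groups; what \cite{P.M.1} proves is only the \emph{metabelian} exponent-$p$ case. So exponent $p$ alone does not suffice, and if that citation were your only justification the proof would have a hole. Your parenthetical fallback ``(or by the base case above)'' does close the gap, but it needs the one observation you omit: the quotient inherits the standing hypothesis, indeed trivially. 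Under condition $(i)$, $\gamma_m(G)\subset G^p$ gives $\gamma_m(G/G^p)=1$, so $G/G^p$ has nilpotency class at most $m-1\le p-2$; under condition $(ii)$, $\gamma_p(G)\subset G^{p^2}\subset G^p$ gives $\gamma_p(G/G^p)=1$, so class at most $p-1$. In either case Theorem \ref{T:4.8} applies directly to $\frac{G}{G^p}$, which is exactly how the paper argues. With that substitution in place of the appeal to \cite{P.M.1}, your proof is correct and coincides with the paper's.
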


\begin{proof}
\begin{itemize}
\item[(i)] Let $\e(G)=p^e$. If $e=1$, then $\gamma_{p-1}(G)\subset G^p =1$. By Theorem \ref{T:4.8}, we obtain $\e(G\wedge G)\mid \e(G)$. Let $e>1$ and consider the exact sequence $G^p\wedge G\rightarrow G\wedge G \rightarrow \frac{G}{G^p}\wedge \frac{G}{G^p} \rightarrow 1$. We have, $\e(G\wedge G)\mid \e(im(G^p\wedge G)) \e(\frac{G}{G^p}\wedge \frac{G}{G^p})$. By Theorem \ref{L:4.7}, $G^p$ is powerful and $\e(G^p) =p^{e-1}$. Now Theorem \ref{th:4.5} implies that $\e(G^p\wedge G)\mid p^{e-1}$. Since the group $\frac{G}{G^p}$ is nilpotent of class at most $p-2$, by Theorem \ref{T:4.8}$, \e(\frac{G}{G^p}\wedge \frac{G}{G^p})\mid p$. Therefore, $\e(G\wedge G)\mid \e(G^p\wedge G)\e(\frac{G}{G^p}\wedge \frac{G}{G^p})\mid p^{e-1}p =p^e$.

\item[(ii)] Let $\e(G) = p^e$. If $e =2$, then $\gamma_p(G)\subset G^{p^2} =1$. So by Theorem \ref{T:4.8}, $\e(G\wedge G)\mid \e(G)$. Let $e >2$, consider the exact sequence 
$$G^p\wedge G\rightarrow G\wedge G \rightarrow \frac{G}{G^p}\wedge \frac{G}{G^p} \rightarrow 1.$$ We have, $\e(G\wedge G)\mid \e(im(G^p\wedge G))\e(\frac{G}{G^p}\wedge \frac{G}{G^p})$. By Theorem \ref{L:4.7} $G^p$ is powerful and $\e(G^p) =p^{e-1}$. Then by Theorem \ref{th:4.5} $\e(G^p\wedge G)\mid p^{e-1}$. Since $\gamma_p(G)\subset G^{p^2}\subset G^p$, the group $\frac{G}{G^p}$ has nilpotency class $\le p-1$. So by Theorem \ref{T:4.8}, $\e(\frac{G}{G^p}\wedge \frac{G}{G^p})\mid p$. Therefore, $\e(G\wedge G)\mid p^{e-1}p =p^e$. \end{itemize}
\end{proof}

In the next Theorem, we show that to prove the conjecture for regular $p$-groups, it is enough to prove it for groups of exponent $p$. This result also appears in \cite{S1}. But here, we prove it more generally for the exterior square.  
 \begin{theorem}\label{T:4.10}
The following statements are equivalent:
\begin{itemize}
\item[$(i)$] $\e(G\wedge G)\mid \e(G)$ for all regular $p$-groups $G$.
\item[$(ii)$] $\e(G\wedge G)\mid \e(G)$ for all groups $G$ of exponent $p$.
\end{itemize}

 \end{theorem}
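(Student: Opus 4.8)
The plan is to prove the two implications separately. The direction $(i)\Rightarrow(ii)$ is immediate, while $(ii)\Rightarrow(i)$ carries the actual content but reduces cleanly to results already established in this section. For $(i)\Rightarrow(ii)$ I would simply observe that every group of exponent $p$ is a regular $p$-group. Indeed, recall that $G$ is regular if for all $x,y\in G$ one has $(xy)^p=x^py^pz$, where $z$ is a product of $p$th powers of elements of $\gamma_2(\langle x,y\rangle)$. If $G$ has exponent $p$, then $x^p=y^p=(xy)^p=1$, so $(xy)^px^{-p}y^{-p}=1$ and the regularity condition holds with $z=1$. Thus the class of exponent-$p$ groups is contained in the class of regular $p$-groups, and the assertion in $(i)$ specializes at once to $(ii)$.

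For the converse $(ii)\Rightarrow(i)$, let $G$ be a regular $p$-group with $\e(G)=p^e$. Applying Lemma \ref{L:4.4} with $N=G$ and $M=G^p$ gives the exact sequence
\begin{align*}
G^p\wedge G\rightarrow G\wedge G\rightarrow \frac{G}{G^p}\wedge\frac{G}{G^p}\rightarrow 1,
\end{align*}
so that $\e(G\wedge G)\mid \e(im(G^p\wedge G))\,\e\!\left(\frac{G}{G^p}\wedge\frac{G}{G^p}\right)$. The quotient $G/G^p$ has exponent $p$, so the hypothesis $(ii)$ yields $\e\!\left(\frac{G}{G^p}\wedge\frac{G}{G^p}\right)\mid p$. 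For the first factor, regularity is exactly what is needed: by Theorem \ref{L:4.7}$(i)$ the subgroup $G^p$ is powerful and consists precisely of the $p$th powers of elements of $G$, so each of its elements $x^p$ satisfies $(x^p)^{p^{e-1}}=x^{p^e}=1$ and hence $\e(G^p)\mid p^{e-1}$. Since $G^p$ is a normal powerful subgroup of $G$, Theorem \ref{th:4.5} applies and gives $\e(G^p\wedge G)\mid \e(G^p)\mid p^{e-1}$, whence $\e(im(G^p\wedge G))\mid p^{e-1}$.

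Combining the two bounds gives $\e(G\wedge G)\mid p^{e-1}\cdot p=p^e=\e(G)$, which completes $(ii)\Rightarrow(i)$. I do not expect a serious obstacle once the earlier machinery is available; the only point requiring care is the exponent bookkeeping across the exact sequence, and in particular the dual role of regularity, which simultaneously guarantees that $G^p$ is powerful (so that Theorem \ref{th:4.5} may be invoked) and that its exponent drops to a divisor of $p^{e-1}$ (so that the two factors recombine to $p^e$ with no loss). This reduction mirrors the structure already used in the proofs of Theorems \ref{th:4.5} and \ref{T:4.9}, so I would present it in the same style.
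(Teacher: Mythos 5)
Your proposal is correct and follows essentially the same route as the paper: the trivial direction via regularity of exponent-$p$ groups, and for the converse the exact sequence $G^p\wedge G\rightarrow G\wedge G\rightarrow \frac{G}{G^p}\wedge\frac{G}{G^p}\rightarrow 1$ combined with Theorem \ref{L:4.7} (so $G^p$ is powerful of exponent $p^{e-1}$), Theorem \ref{th:4.5} for the first factor, and hypothesis $(ii)$ applied to $G/G^p$. Your added justification that exponent-$p$ groups are regular and the explicit citation of Lemma \ref{L:4.4} are harmless elaborations of steps the paper leaves implicit.
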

 \begin{proof}
 Since groups of exponent $p$ are regular, one direction of the proof is trivial. To see the other direction, let $G$ be a regular group. Suppose $\e(G) = p^e$, $e >1$. Consider the exact sequence $$G^p\wedge G\rightarrow G\wedge G \rightarrow \frac{G}{G^p}\wedge \frac{G}{G^p} \rightarrow 1.$$ We have $\e(G\wedge G)\mid \e(im(G^p\wedge G)) \e(\frac{G}{G^p}\wedge \frac{G}{G^p})$. By Theorem \ref{L:4.7}, $G^p$ is powerful, and $\e(G) = p^{e-1}$. Then by Theorem \ref{th:4.5} $\e(G^p\wedge G)\mid p^{e-1}$, so is $\e(im(G^p\wedge G))$. Since $\frac{G}{G^p}$ has exponent $p$, we will have $\e(\frac{G}{G^p}\wedge \frac{G}{G^p})\mid p$. Therefore $\e(G\wedge G)\mid p^e$.
 \end{proof}

\section{Bounds depending on the nilpotency class}

In the next lemma, we use the isomorphism between the nonabelian tensor square of $G$ and the subgroup $[G, G^{\phi}]$ of $\gamma(G)$ (cf. \cite{EL} and \cite{R}). The next two lemma's are required to prove that $\e(\M)\mid (\e(G))^n$, where $n= \ceil{\log_{2}(\frac{c+1}{3})}$.  

\begin{lemma}\label{L:1}
Let $G$ be a nilpotent group of class $c$. Then for $n = \ceil{\frac{c}{2}}$, the image of $\gamma_{n}(G) \otimes G$ in $G\otimes G$ is abelian.
\end{lemma}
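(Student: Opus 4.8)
The plan is to work inside the image of $\gamma_n(G)\otimes G$ in $G\otimes G$ and show that any two generators $a\otimes g$ and $b\otimes h$ commute, where $a,b\in\gamma_n(G)$ and $g,h\in G$. By the commutator formula \eqref{eq:1.2.5}, we have $[a\otimes g,\ b\otimes h]=[a,g]\otimes[b,h]$. So it suffices to show that this element is trivial in the image. Now $a,b\in\gamma_n(G)$ forces $[a,g]\in\gamma_{n+1}(G)$ and $[b,h]\in\gamma_{n+1}(G)$, so $w([a,g])+w([b,h])\ge 2(n+1)$. First I would check that this weight sum exceeds the nilpotency class enough to kill the tensor. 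Using the isomorphism $\psi: G\otimes G\to[G,G^\phi]$ with $\psi(x\otimes y)=[x,y^\phi]$ (as in Lemma \ref{L:3}), the element $[a,g]\otimes[b,h]$ maps to a commutator of weight at least $2(n+1)=2n+2$ in $\eta(G)$.

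The key numerical step is to verify that $2n+2>c$ when $n=\ceil{\frac{c}{2}}$, so that any such commutator lands in $\gamma_{c+1}(G)=1$ and the tensor element vanishes. If $c=2m$ is even, then $n=m$ and $2n+2=2m+2=c+2>c$; if $c=2m+1$ is odd, then $n=m+1$ and $2n+2=2m+4=c+3>c$. In both cases $2n+2>c$, so the relevant weight exceeds the class and the commutator of weight $2(n+1)$ in the appropriate group is trivial. Hence $[a,g]\otimes[b,h]=1$ in $G\otimes G$, and therefore its image in $G\otimes G$ (and a fortiori in the image of $\gamma_n(G)\otimes G$) is trivial.

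Since the generators of the image of $\gamma_n(G)\otimes G$ are the elements $a\otimes g$ with $a\in\gamma_n(G)$, $g\in G$, and we have just shown that any two such generators commute, the image is generated by pairwise-commuting elements and is therefore abelian. This completes the argument. The only subtlety, and the step I would present most carefully, is the bookkeeping that $[a,g]$ and $[b,h]$ genuinely have weight at least $n+1$ and that the commutator $\psi([a,g]\otimes[b,h])$ has weight at least the sum of these weights in $\eta(G)$; the inequality $2n+2>c$ itself is the routine but essential verification, and I expect no serious obstacle beyond carefully handling the even and odd cases for $c$ separately.
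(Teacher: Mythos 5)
Your overall strategy is the paper's own: embed $G\otimes G$ into $\eta(G)$ (the paper writes $\gamma(G)$ in this proof) via $\psi(x\otimes y)=[x,y^{\phi}]$, reduce the claim to showing that the commutator of two generators of the image dies for weight reasons, and then do the arithmetic with $n=\ceil{c/2}$. Your detour through \eqref{eq:1.2.5} is harmless but also unnecessary: the paper simply computes $\psi([n_1\otimes g_1,\,n_2\otimes g_2])=[[n_1,g_1^{\phi}],[n_2,g_2^{\phi}]]$ directly, whereas you first rewrite the commutator as $[a,g]\otimes[b,h]$ and then apply $\psi$; either way one arrives at a commutator of weight at least $2n+2$ in $\eta(G)$.

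However, your vanishing criterion is off by one, and this is a genuine flaw in the justification rather than a cosmetic slip. The element $[[a,g],[b,h]^{\phi}]$ lives in $\eta(G)$, not in $G$, so the appeal to ``$\gamma_{c+1}(G)=1$'' does not apply to it; and the fact you actually need -- Theorem A of \cite{R}, which the paper cites at exactly this point -- is that $\eta(G)$ has nilpotency class at most $c+1$, \emph{not} $c$. Consequently the correct threshold is weight $\geq c+2$, i.e. $2n+2\geq c+2$, and your stated criterion $2n+2>c$ is insufficient in general: for $G$ abelian ($c=1$) and $x,y\in G$, the tensor $x\otimes y$ has weight sum $2>c$, yet it is typically nontrivial (take $G=\mathbb{Z}$, where $G\otimes G\cong\mathbb{Z}$). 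Fortunately, your case analysis actually establishes the stronger inequality -- you compute $2n+2=c+2$ when $c$ is even and $2n+2=c+3$ when $c$ is odd -- so the proof is repaired by a one-line change: invoke the class bound on $\eta(G)$ and conclude, as the paper does, that $[[n_1,g_1^{\phi}],[n_2,g_2^{\phi}]]\in\gamma_{2n+2}(\eta(G))\leq\gamma_{c+2}(\eta(G))=1$, whence the commutator of the two generators vanishes by injectivity of $\psi$.
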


\begin{proof}
Consider the isomorphism $\psi : G\otimes G \rightarrow [G,G^{\phi}]$ defined by $\psi(g\otimes h)= [g,h^{\phi}]$, where $G^{\phi}$ is an isomorphic copy of $G$. Recall that
$[G,G^{\phi}]$ is a subgroup of $\gamma(G)$. By Theorem A of \cite{R}, $\gamma(G)$ is of nilpotency class at most $c+1$.

For $n_1,n_2 \in \gamma_n(G)$ and $g_1,g_2 \in G$, we obtain
\begin{align*}
\psi( [(n_1\otimes g_1),(n_2\otimes g_2)]) &= [[n_1,g_1^{\phi}],[n_2,g_2^{\phi}]] \in \gamma_{2n+2}(\gamma(G)).
\end{align*}
But $\gamma_{2n+2}(\gamma(G)) \leq \gamma_{c+2}(\gamma(G))=1$, giving us the desired result.

\end{proof}

\begin{lemma}\label{L:2}
Let $G$ be a nilpotent group of class $c$ and of odd order. For $n = \ceil{\frac{c}{2}}$, $x \in \gamma_n(G)$ and $ g \in G$, $exp(x \otimes g)$ divides $exp(\gamma_n(G))$.
\end{lemma}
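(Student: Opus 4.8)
The plan is to use the same isomorphism $\psi : G\otimes G \to [G,G^{\phi}]$ that appears in Lemma \ref{L:1}, and to reduce the claim about $\exp(x\otimes g)$ to a computation of a power $(x\otimes g)^m$ where $m = \exp(\gamma_n(G))$. First I would fix $x\in\gamma_n(G)$ and $g\in G$, set $N=\gamma_n(G)$, and note that $N$ is abelian: since $n=\ceil{c/2}$ we have $2n\ge c+1 > c$, so $\gamma_{2n}(G)\subset\gamma_{c+1}(G)=1$, whence $[N,N]=1$. Thus $N$ is a normal abelian subgroup of $G$ and I can apply Lemma \ref{L:4.1}$(i)$ with $M=G$ to expand $x^m\otimes g$.

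The key computation is then
\begin{equation*}
x^m\otimes g = (x\otimes [x,g])^{\binom{m}{2}}(x\otimes g)^m,
\end{equation*}
from Lemma \ref{L:4.1}$(i)$, and the left-hand side is $1$ because $x^m=1$ (as $m=\exp(N)$ and $x\in N$). Since the order $|G|$ is odd, so is $m=\exp(N)$, and therefore $m\mid\binom{m}{2}=m(m-1)/2$; consequently $(x\otimes[x,g])^{\binom{m}{2}}$ is an $m$-th power of $x\otimes[x,g]$. So I would next show that $x\otimes[x,g]$ itself has order dividing $m$, which would force $(x\otimes g)^m=1$ and finish the proof. For this I would again invoke Lemma \ref{L:4.1}, this time part $(ii)$: writing $[x,g]=[n_1,m_1]$-type commutators, $x^m\otimes[x,g]=(x\otimes[x,g])^m=1$ since $x^m=1$. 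Combining, $(x\otimes g)^m=1$, i.e. $\exp(x\otimes g)\mid m=\exp(\gamma_n(G))$, as desired.

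I expect the main subtlety to be organizing the applications of Lemma \ref{L:4.1} correctly: that lemma is stated for $n^i\otimes m$ with $n,n_1\in N$ abelian and $m,m_1\in M$, so I must verify that $[x,g]$ lies in $N=\gamma_n(G)$ so that the ``center of $N\otimes M$'' arguments underpinning Lemma \ref{L:4.1} apply. Here $[x,g]\in\gamma_{n+1}(G)\subset\gamma_n(G)=N$ since $x\in\gamma_n(G)$, so this is fine, and $x\otimes[x,g]$ is a legitimate element to which part $(ii)$ applies. The one place requiring genuine care is the parity step: the reduction $(x\otimes g)^m=1$ hinges on $m$ being odd so that $m\mid\binom{m}{2}$, which is exactly where the hypothesis that $G$ has odd order is used. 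The argument is otherwise a direct specialization of Lemma \ref{L:4.2}$(i)$ (with $N=\gamma_n(G)$ abelian of odd exponent and $M=G$), so in fact the cleanest route is simply to apply Lemma \ref{L:4.2}$(i)$ to conclude $\exp(\gamma_n(G)\otimes G)\mid\exp(\gamma_n(G))$ and then restrict to the single generator $x\otimes g$.
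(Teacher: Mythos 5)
Your computational strategy matches the paper's, but it rests on a false intermediate claim: that $N=\gamma_n(G)$ is abelian because ``$2n\ge c+1$''. With $n=\ceil{c/2}$ this inequality holds only when $c$ is odd; for even $c$ one has $2n=c$, so all that follows is $[\gamma_n(G),\gamma_n(G)]\subseteq\gamma_{2n}(G)=\gamma_c(G)$, which is central but in general nontrivial (already for the free nilpotent group of class $c=4$, with $n=2$, one has $[[x,y],[z,w]]\neq 1$, so $\gamma_2(G)$ is not abelian). Since Lemma \ref{L:4.1} and Lemma \ref{L:4.2} both carry the hypothesis that $N$ is abelian, neither can be invoked as a black box with $N=\gamma_n(G)$; in particular your proposed ``cleanest route'' --- applying Lemma \ref{L:4.2}$(i)$ to conclude $\e(\gamma_n(G)\otimes G)\mid \e(\gamma_n(G))$ --- is simply not available.

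The gap is repairable, and the repair is exactly what the paper does. What holds for every $c$, odd or even, is the pointwise statement $[x,[x,g]]\in\gamma_{2n+1}(G)\subseteq\gamma_{c+1}(G)=1$, since $2n+1\ge c+1$; and an inspection of the proof of Lemma \ref{L:4.1}$(i)$ shows that this single identity for the specific pair $(x,g)$ --- together with $[x,g]\in\gamma_{n+1}(G)\subseteq\gamma_n(G)$, which you did verify --- is all that is needed to make $x\otimes[x,g]$ central and to run the induction giving $x^t\otimes g=(x\otimes[x,g])^{\binom{t}{2}}(x\otimes g)^t$, where $t=\e(\gamma_n(G))$. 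This is why the paper opens its proof by recording $[x,[x,g]]=1$ rather than any abelianness claim, and only then invokes the Lemma \ref{L:4.1}$(i)$ computation; it then kills the factor $(x\otimes[x,g])^{\binom{t}{2}}=x\otimes[x,g]^{\frac{t}{2}(t-1)}$ using $[x,g]^t=1$ and the oddness of $t$, which is the same parity step you identify. Your remaining manipulations (the appeal to part $(ii)$, and $m\mid\binom{m}{2}$ for odd $m$) are sound once they are justified by this pointwise commutator fact instead of by abelianness of $\gamma_n(G)$; as written, however, your proof asserts and leans on a statement that is false whenever $c$ is even.
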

\begin{proof}
For $x \in \gamma_n(G)$ and $g \in G$, $[x, [x,g]] \in \gamma_{2n+1}(G) \leq \gamma_{c+1}(G) = 1$.
Let $t = exp(\gamma_n(G))$. Then by Lemma \ref{L:4.1} ($i$), we have
\begin{align*}
x^t \otimes g &= (x\otimes [x,g]^{\frac{t}{2}(t-1)})(x \otimes g)^t\\
&= (x \otimes g)^t
\end{align*}
and hence the result.
\end{proof}

In \cite{G.E.1}, G. Ellis proves that if the nilpotency class of $G$ is c, then $\e(\M)\mid ( \e(G))^{\ceil{\frac{c}{2}}}$. In \cite{P.M.1}, P. Moravec improves this bound by showing that $\e(\M)\mid ( \e(G))^{2\floor{\log_2 c}}$. In the next Theorem, we improve both these bounds. The cases $c=1,2$ have been excluded as the conjecture is known to be true in those cases.

\begin{theorem}\label{T:5.3}
Let $G$ be a group with nilpotency class $c>2$ and let  $n= \ceil{\log_{2}(\frac{c+1}{3})}$. If $\e(G)$ is odd, then $\e(G\wedge G) \mid (\e(G))^n$. In particular, $\e(\M)\mid (\e(G))^n$.
\end{theorem}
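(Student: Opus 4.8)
The plan is to prove the statement by induction on the nilpotency class $c$, reducing at each step to a quotient of roughly half the class while paying only a \emph{single} factor of $\e(G)$, and arranging the recursion to bottom out at class $5$ rather than at class $2$. This last point, together with losing just one factor per step, is exactly what produces the improved constant $\lceil\log_2\frac{c+1}{3}\rceil$ over the bounds of Ellis and Moravec. For the base case $3\le c\le 5$ one has $n=1$, and $\e(G\wedge G)\mid\e(G)$ is precisely the content of Section~3 together with the odd-order class~$4$ (and class~$\le 3$) results, after the standard reduction of a finite nilpotent group to its primary components, where the exterior square splits as a direct product over the Sylow subgroups.

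For the inductive step assume $c\ge 6$ and set $k=\lfloor c/2\rfloor+1$. I would use the exact sequence
\begin{align*}
\gamma_k(G)\wedge G \longrightarrow G\wedge G \longrightarrow \tfrac{G}{\gamma_k(G)}\wedge \tfrac{G}{\gamma_k(G)} \longrightarrow 1,
\end{align*}
already exploited in Sections~3 and~4, which yields $\e(G\wedge G)\mid \e(\mathrm{im}(\gamma_k(G)\wedge G))\cdot\e\bigl(\tfrac{G}{\gamma_k(G)}\wedge\tfrac{G}{\gamma_k(G)}\bigr)$. Since $k\ge\lceil c/2\rceil$ (with equality when $c$ is odd), the image of $\gamma_k(G)\otimes G$ in $G\otimes G$ sits inside the image of $\gamma_{\lceil c/2\rceil}(G)\otimes G$, which is abelian by Lemma~\ref{L:1}; and by Lemma~\ref{L:2} every generator $x\otimes g$ with $x\in\gamma_k(G)$ has order dividing $\e(\gamma_{\lceil c/2\rceil}(G))\mid\e(G)$. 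Passing to the exterior quotient gives $\e(\mathrm{im}(\gamma_k(G)\wedge G))\mid\e(G)$, a single factor. It is exactly here that the odd-exponent hypothesis is essential: in Lemma~\ref{L:2} oddness forces $t\mid\binom{t}{2}$, which annihilates the cross term $x\otimes[x,g]^{\binom{t}{2}}$ and collapses the top layer to one power of $\e(G)$; without it one would be forced into a second factor, doubling the exponent of the final bound.

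The quotient $G/\gamma_k(G)$ has class at most $k-1=\lfloor c/2\rfloor<c$, so the induction hypothesis applies and gives $\e\bigl(\tfrac{G}{\gamma_k(G)}\wedge\tfrac{G}{\gamma_k(G)}\bigr)\mid \e(G)^{\,n(\lfloor c/2\rfloor)}$, where I write $n(m)=\lceil\log_2\frac{m+1}{3}\rceil$ and use that $n$ is nondecreasing (so a drop in class only helps). Combining the two estimates yields $\e(G\wedge G)\mid\e(G)^{\,1+n(\lfloor c/2\rfloor)}$, and it remains to check the arithmetic identity $1+n(\lfloor c/2\rfloor)=n(c)$ for every $c\ge 6$. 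This is short: for odd $c$ one has $c+1=2(\lfloor c/2\rfloor+1)$, so the identity is exact, and for even $c$ one verifies that the two ceilings coincide even at the boundary values $c=2(3\cdot 2^{j-1}-1)$, where $\tfrac{c+2}{3}$ is an exact power of $2$ while $\tfrac{c+1}{3}$ lies just below it.

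The main obstacle is really this bookkeeping rather than any single hard computation: one must split at $k=\lfloor c/2\rfloor+1$ (not at the naive $\lceil c/2\rceil$) and anchor the recursion at class~$5$, since otherwise the recursion $1+F(\lfloor c/2\rfloor)$ fails to telescope to $\lceil\log_2\frac{c+1}{3}\rceil$. Concretely, the choice of $k$ is the largest index for which the top layer $\gamma_k(G)\wedge G$ is still forced to be abelian with exponent dividing $\e(G)$, so that it simultaneously minimizes the residual class $k-1$ of the quotient; verifying that this optimal choice reproduces the stated ceiling-logarithm, and that the class-$5$ base case (the deep input from Section~3) is what lowers the effective ``$/3$'' factor, is where the care lies.
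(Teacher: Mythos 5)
Your proposal is correct and is essentially the paper's own argument: the same induction anchored at the class-$5$ theorem of Section~3 (Theorem~\ref{T:3}), the same exact sequence $\gamma_k(G)\wedge G \rightarrow G\wedge G \rightarrow \frac{G}{\gamma_k(G)}\wedge\frac{G}{\gamma_k(G)}\rightarrow 1$, and the same use of Lemma~\ref{L:1} and Lemma~\ref{L:2} to make the top layer abelian of exponent dividing $\e(G)$ at the cost of a single factor, followed by the ceiling-logarithm bookkeeping. The only deviation is your cut point $k=\lfloor c/2\rfloor+1$ versus the paper's $m=\lceil c/2\rceil$ (these differ by $1$ exactly when $c$ is even), and your closing assertion that one \emph{must} split at $\lfloor c/2\rfloor+1$ because the ``naive'' $\lceil c/2\rceil$ ``fails to telescope'' is mistaken: the paper splits precisely at $m=\lceil c/2\rceil$, whose quotient has class $\lceil c/2\rceil-1\leq\lfloor c/2\rfloor$, so by the same monotonicity of $n(\cdot)$ the recursion closes there at least as comfortably.
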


\begin{proof}
The proof is by induction on $n$. Note that, $n \geq \log_2(\frac{c+1}{3})$ if and only if $c \leq (2^n \times 3) - 1$. 
When $n = 1$, the statement is true by Theorem \ref{T:3}. Let us prove for the $n$.
Towards that, consider the following exact sequence which can be obtained from Theorem (3.1) in \cite{V.G}, where $m = \ceil{\frac{c}{2}}$: 

\begin{align*}
 \gamma_m(G)\wedge G \rightarrow G \wedge G \rightarrow \frac{G}{\gamma_m(G) }\wedge \frac{G}{\gamma_m(G)} \rightarrow 1.
\end{align*}

Thus $\e(G \wedge G)\mid \e(im(\gamma_m(G) \wedge G))\e( \frac{G}{\gamma_m(G) }\wedge \frac{G}{\gamma_m(G)})$. By Lemma \ref{L:1} and \ref{L:2}, we obtain that $\e(im(\gamma_m(G) \wedge G)) \mid \e(\gamma_m(G))$. Now $\frac{G}{\gamma_m(G)}$ is of nilpotency class $m-1$ and  $m -1 < (2^{n-1} \times 3) - 1$. Applying the induction hypothesis, we obtain $\e( \frac{G}{\gamma_m(G) }\wedge \frac{G}{\gamma_m(G)} ) \mid \e(G)^{n-1}$ and the result follows.

\end{proof}

We know that if $p> c$, then $\e(\M) \mid \e(G)$. In the next theorem, we show that if we consider primes less than the nilpotency class, but $\ceil{\frac{c}{2}}$ at most $p$, then we get a bound of 2, that is:

\begin{theorem}
Let $p$ be an odd prime and $G$ be a finite $p$-group of nilpotency class $c$. If $m := \ceil{\frac{c}{2}}\leq  p$, then $\e(\M) \mid \e(\gamma_m (G)) \e(\frac{G}{\gamma_m(G)})$. In particular, $\e(\M) \mid (\e(G))^2$.
\end{theorem}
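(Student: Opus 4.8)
The plan is to mimic the proof of Theorem \ref{T:5.3}, replacing the inductive step on the quotient with a direct application of Theorem \ref{T:4.8}. This substitution is available precisely because the hypothesis $m\leq p$ forces the quotient $G/\gamma_m(G)$ to have nilpotency class in the range where Theorem \ref{T:4.8} applies. First I would invoke the same exact sequence coming from Theorem 3.1 of \cite{V.G}, with $m=\ceil{\frac{c}{2}}$:
\begin{align*}
\gamma_m(G)\wedge G \rightarrow G \wedge G \rightarrow \frac{G}{\gamma_m(G)}\wedge \frac{G}{\gamma_m(G)} \rightarrow 1.
\end{align*}
This immediately yields
\[
\e(G\wedge G) \mid \e\big(\mathrm{im}(\gamma_m(G)\wedge G)\big)\,\e\Big(\tfrac{G}{\gamma_m(G)}\wedge \tfrac{G}{\gamma_m(G)}\Big),
\]
so it suffices to bound the two factors on the right.

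For the first factor, since $p$ is odd the group $G$ has odd order, and Lemmas \ref{L:1} and \ref{L:2} apply verbatim: the image of $\gamma_m(G)\otimes G$ in $G\otimes G$ is abelian, and each generator $x\otimes g$ with $x\in\gamma_m(G)$ has order dividing $\e(\gamma_m(G))$. Passing through the natural surjection $G\otimes G\to G\wedge G$, I obtain $\e(\mathrm{im}(\gamma_m(G)\wedge G))\mid \e(\gamma_m(G))$. For the second factor, the quotient $G/\gamma_m(G)$ has nilpotency class at most $m-1$, and the hypothesis $m\leq p$ gives $m-1\leq p-1$; hence Theorem \ref{T:4.8} applies to $G/\gamma_m(G)$ and gives $\e\big(\tfrac{G}{\gamma_m(G)}\wedge \tfrac{G}{\gamma_m(G)}\big)\mid \e(\tfrac{G}{\gamma_m(G)})$.

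Combining the two bounds with $\e(\M)\mid \e(G\wedge G)$ then yields $\e(\M)\mid \e(\gamma_m(G))\,\e(\tfrac{G}{\gamma_m(G)})$, which is the main assertion. For the final ``in particular'' statement, I would simply note that $\e(\gamma_m(G))\mid \e(G)$ because $\gamma_m(G)$ is a subgroup, and $\e(\tfrac{G}{\gamma_m(G)})\mid \e(G)$ because it is a quotient; hence the product divides $(\e(G))^2$.

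I expect no serious obstacle here, as the result is essentially a repackaging of the exact sequence together with two already-established bounds. The only points requiring genuine care are the transfer of the exponent estimate from the nonabelian tensor square (where Lemmas \ref{L:1} and \ref{L:2} are stated) to the exterior square appearing in the sequence, and the verification that the numerical hypothesis $m\leq p$ is exactly what brings the class of $G/\gamma_m(G)$ into the regime $\leq p-1$ covered by Theorem \ref{T:4.8}.
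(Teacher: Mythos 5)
Your proposal is correct, but it glues the two estimates together differently than the paper does. You and the paper agree on the core ingredients: both bound the subgroup factor $\e(im(\gamma_m(G)\wedge G))\mid \e(\gamma_m(G))$ via Lemmas \ref{L:1} and \ref{L:2} (your remark that oddness of $p$ supplies the odd-order hypothesis of Lemma \ref{L:2}, and that the estimate passes from the tensor to the exterior square, is exactly the care needed), and both exploit that $G/\gamma_m(G)$ has class at most $m-1\le p-1$. The difference is in the mechanism: the paper does not pass through $\e(G\wedge G)$ at all; it builds a commutative diagram whose rows are the exterior-square exact sequence and $1\to Im(\alpha)\to\gamma_2(G)\to\gamma_2(G)/Im(\alpha)\to 1$, applies the snake lemma to get $\ker(\alpha)\to \M\to\ker(\beta)\to 1$, observes $\ker(\beta)\le \m\bigl(\frac{G}{\gamma_m(G)}\bigr)$, and then cites Corollary 3.4 of \cite{H.M.M} to bound $\e\bigl(\m\bigl(\frac{G}{\gamma_m(G)}\bigr)\bigr)$ by $\e\bigl(\frac{G}{\gamma_m(G)}\bigr)$. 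You instead bound the full exterior square, $\e(G\wedge G)\mid \e(im(\gamma_m(G)\wedge G))\,\e\bigl(\frac{G}{\gamma_m(G)}\wedge\frac{G}{\gamma_m(G)}\bigr)$, invoke the paper's own Theorem \ref{T:4.8} on the quotient, and finish with $\M\le G\wedge G$. Your route buys two things: it is self-contained (no appeal to \cite{H.M.M}) and it proves the formally stronger statement $\e(G\wedge G)\mid \e(\gamma_m(G))\,\e\bigl(\frac{G}{\gamma_m(G)}\bigr)$. The paper's snake-lemma route isolates $\M$ itself, so it only needs control of the Schur multiplier of the quotient rather than its whole exterior square --- a weaker input, which is why a multiplier-specific result suffices there; in the present theorem both inputs are available, so the two arguments deliver the same bound.
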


\begin{proof}
Consider the following commutative diagram where $\alpha$ and $\beta$ are the natural commutator maps from the respective domains : 

\begin{equation*}
\xymatrix@+20pt{
&\gamma_m (G)\wedge G\ar@{->}[r]
\ar@{->}^{\alpha}[d]
 &G\wedge G\ar@{->}[r]
\ar@{->}[d]
&\frac{G}{\gamma_m (G)}\wedge \frac{G}{\gamma_m (G)} \ar@{->}[r]
\ar@{->}^{\beta}[d]
&1 \\
1\ \ar@{->}[r] 
&Im(\alpha)\ar@{->}[r]
 &\gamma_2(G)\ar@{->}[r]
&\frac{\gamma_2(G)}{Im(\alpha)}\ar@{->}[r]
&1.
}\end{equation*}

By snake lemma, we obtain $\ker(\alpha) \rightarrow \M \rightarrow \ker(\beta) \rightarrow 1$.
Since $\ker(\beta) \leq \m(\frac{G}{\gamma_m(G)})$,  we have $\e(\M) \mid \e(im(\gamma_m(G)\wedge G))\e(\m(\frac{G}{\gamma_m(G)}))$. Applying lemma \ref{L:1} and \ref{L:2} yields $\e(im(\gamma_m(G) \wedge G)) \mid \e(\gamma_m(G))$. By Corollary 3.4 in \cite{H.M.M}, we have $\e(\m(\frac{G}{\gamma_m(G)})) \mid \e(\frac{G}{\gamma_m(G)})$ and the result follows.

\end{proof}

The next lemma is crucially used for the last theorem of this section.

\begin{lemma}\label{L:5.5}
Let $p$ be an odd prime and $G$ be a finite $p$-group. If $N \unlhd G$ of nilpotency class at most $p-2$, then $\e(N \wedge G) \mid \e(N)$.
\end{lemma}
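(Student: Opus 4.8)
The plan is to transcribe the covering-group argument of Theorem~\ref{T:4.8} into the relative setting, replacing the exterior square $G\wedge G\cong\gamma_2(H)$ by a relative realization of $N\wedge G$ as a commutator subgroup. First I would fix a covering group $H$ of $G$, say a central extension $1\to M(G)\to H\xrightarrow{\pi}G\to 1$ with $M(G)\le Z(H)\cap\gamma_2(H)$, and set $\hat N=\pi^{-1}(N)\unlhd H$. The key structural input is the relative analogue of the isomorphism used in Theorem~\ref{T:4.8}, namely $N\wedge G\cong[\hat N,H]$; this is the relative form of $G\wedge G\cong\gamma_2(H)$ and is obtained from the identification of the (relative) tensor and exterior products with commutator subgroups inside the groups $\eta(G)$, $\nu(G)$ of \cite{R} and \cite{EL}, where $N\otimes G\cong[N,G^{\phi}]$. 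Granting this, it suffices to bound $\e([\hat N,H])$.

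Next I would record the two structural consequences that make the hypothesis on the class of $N$ exactly right. Since $M(G)\le Z(H)$, the surjection $\hat N\to N$ is a central extension, so the nilpotency class of $\hat N$ exceeds that of $N$ by at most one; as $N$ has class at most $p-2$, the group $\hat N$ has class at most $p-1$. Moreover, writing $\e(N)=p^e$, the quotient $\hat N/M(G)\cong N$ has exponent $p^e$, whence $\hat N^{p^e}\le M(G)\le Z(H)$; in particular $\hat N$, and hence every subgroup of $\hat N$, is $p^e$-central.

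Now I would run the computation of Theorem~\ref{T:4.8} verbatim. For $a\in\hat N$ and $b\in H$ the subgroup $\langle a,[b,a]\rangle$ lies in $\hat N$ (since $\hat N\unlhd H$), so it has class at most $p-1$ and is $p^e$-central, and is therefore $p^e$-abelian. As $a^{p^e}\in Z(H)$, this yields $[b,a]^{p^e}=({}^{b}a\,a^{-1})^{p^e}=({}^{b}a)^{p^e}(a^{-1})^{p^e}={}^{b}(a^{p^e})\,a^{-p^e}=[b,a^{p^e}]=1$. Thus $[\hat N,H]$ is generated by elements of order dividing $p^e$. Finally, $[\hat N,H]\le\hat N$ has class at most $p-1$ and is $p^e$-central, hence $p^e$-abelian, so the $p^e$-th power map is an endomorphism of it; combined with the previous sentence this forces $\e([\hat N,H])\mid p^e$, and therefore $\e(N\wedge G)\mid p^e=\e(N)$.

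The routine ingredient, that a $p^e$-central group of class less than $p$ is $p^e$-abelian, is already invoked in Theorem~\ref{T:4.8}, so I would simply reuse it. The one genuinely new point, and the step I expect to be the main obstacle, is the relative realization $N\wedge G\cong[\hat N,H]$: one must verify that a covering group of $G$ \emph{simultaneously} realizes the relative exterior product, i.e.\ that the natural map $N\wedge G\to[\hat N,H]$ sending $n\wedge g$ to $[\hat n,\hat g]$ is an isomorphism and not merely a surjection. This is where the real work lies, and I would establish it through the commutator-subgroup description of $N\otimes G$ and $N\wedge G$ inside $\nu(G)$, passing to the quotient by the diagonal generated by the elements $[n,n^{\phi}]$.
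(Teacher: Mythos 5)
You have put your finger on exactly the right step, but that step is not merely ``the main obstacle'' --- it is false, so the proposal has a genuine gap. For $\hat N=\pi^{-1}(N)$ inside an absolute covering group $H$ of $G$, the commutator map identifies $[\hat N,H]$ not with $N\wedge G$ but with the \emph{image} of the natural homomorphism $N\wedge G\to G\wedge G\cong\gamma_2(H)$, and this homomorphism need not be injective, even under the hypotheses of Lemma~\ref{L:5.5}. Concretely, let $p$ be odd, let $G=\langle g\rangle$ be cyclic of order $p^2$ and $N=G^p=\langle x\rangle$, $x=g^p$, its subgroup of order $p$; then $N\unlhd G$ is abelian, so of class $1\le p-2$. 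Since $\M=1$ we may take $H=G$, whence $\hat N=N$ and $[\hat N,H]=[N,G]=1$. On the other hand, all conjugation actions are trivial, so $N\otimes G\cong N\otimes_{\mathbb{Z}}G\cong\mathbb{Z}/p$, generated by $x\otimes g$, and the relators defining $N\wedge G$ are already trivial in $N\otimes G$, since $x^i\otimes x^i=x^i\otimes g^{pi}=(x\otimes g)^{pi^2}=1$. Hence $N\wedge G\cong\mathbb{Z}/p\neq 1=[\hat N,H]$. (The lemma itself holds here: $\e(N\wedge G)=p=\e(N)$.) So your argument bounds the exponent of a proper quotient of $N\wedge G$, and what escapes --- the kernel of $N\wedge G\to G\wedge G$, governed by the $H_3(G/N)$ term in Ellis's exact sequence for a pair --- is controlled by $G/N$, not by $N$, so it cannot be absorbed by any estimate of the kind you run. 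Your closing remark shows the same confusion: inside $\nu(G)$ one obtains $N\wedge G$ from $[N,G^{\phi}]$ by factoring out $\langle [n,n^{\phi}]:n\in N\rangle$, whereas landing in $\gamma_2(H)\cong G\wedge G$ factors out the generally larger subgroup $[N,G^{\phi}]\cap\nabla(G)$.

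The paper's proof avoids this trap by never forming a covering group of $G$ at all: it takes a \emph{projective relative central extension} $\delta:N^{*}\to G$, i.e.\ a covering of the pair $(G,N)$ in the sense of \cite{G.E.1}, for which $[N^{*},G]\cong N\wedge G$ holds exactly; here $[N^{*},G]$ is the subgroup of $N^{*}$ generated by the displacements ${}^{g}x\,x^{-1}$ under the $G$-action on $N^{*}$, not a commutator subgroup inside an overgroup covering $G$. Granting that realization, your computation is essentially the paper's (and Theorem~\ref{T:4.8}'s): $N^{*}$ has class at most $p-1$, hence is regular, and is $t$-central for $t=\e(N)$, hence $t$-abelian, so $({}^{g}x\,x^{-1})^{t}={}^{g}(x^{t})\,x^{-t}=1$ because $t$-th powers lie in the relative centre. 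So the fix is not a cleverer proof of your key structural input, but its replacement by Ellis's relative covering pair.
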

\begin{proof}
Consider a projective relative central extension, $\delta : N^* \rightarrow G$, associated with a covering group $N^*$ of $N$. We know from \cite{G.E.1} that $[N^*, G] \cong N \wedge G$. Since $N$ is of class atmost $p-2$, $N^*$ is of class atmost $p-1$, and hence regular. Let $\e(N)= t$. Since $N^*$ is $t$ central, $N^*$  is $t$ abelian. Therefore, it is enough to prove that ${(n\ ^g{n^{-1}})}^t = 1$ for $n \in N$ and $g \in G$. Towards that end, 
 \begin{align*}
 {(n\ ^g{n^{-1}})}^t =& n^t{(^g n^{-1})}^t ,\  \mbox{since}\ N^*\ \mbox{is\ $t$ abelian} \\
 =& n^t\ ^g(n^{-t})\\
 =&1,  \mbox{\ since}\ n^t \in Z(N,G) .
  \end{align*}
Hence the result.
\end{proof}

In \cite{S2}, N. Sambonet proves that $\e(\M)\mid (\e(G))^m$, where $m=\floor{\log_{p-1} c}+1$. We improve this bound in the theorem below. Since the conjecture is known to be true for $c=1$, we exclude that case.

\begin{theorem}\label{T:5.6}
Let $p$ be an odd prime and G be a finite $p$-group of class $c$ and let $n=\ceil{\log_{p-1}c}$. If $c\neq 1$, then $\e{(G \wedge G)} \mid \e(G)^n$. In particular, $\e(\M)\mid \e(G)^n$.
\end{theorem}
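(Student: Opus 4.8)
The plan is to argue by induction on $n=\ceil{\log_{p-1}c}$, running exactly parallel to the proof of Theorem \ref{T:5.3}, but feeding in Lemma \ref{L:5.5} (the ingredient adapted to the prime $p$) in place of the odd-order Lemmas \ref{L:1} and \ref{L:2}. Since $\M=\ker\kappa$ is a subgroup of $G\wedge G$, the final assertion $\e(\M)\mid\e(G)^n$ will follow at once from $\e(G\wedge G)\mid\e(G)^n$, so I focus on the latter. For the base case $n=1$, the condition $c\neq1$ together with $\ceil{\log_{p-1}c}=1$ forces $1<c\le p-1$, so $G$ has class at most $p-1$ and Theorem \ref{T:4.8} gives $\e(G\wedge G)\mid\e(G)$ directly.

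For the inductive step I take $n\ge2$, equivalently $c\ge p$. The idea is to cut $G$ by a single term $\gamma_m(G)$ of its lower central series, chosen so that $\gamma_m(G)$ itself has class at most $p-2$ while the quotient $G/\gamma_m(G)$ lands in the $(n-1)$-regime. Using the same exact sequence as in Theorem \ref{T:5.3} (from Theorem 3.1 of \cite{V.G}),
\[
\gamma_m(G)\wedge G \to G\wedge G \to \tfrac{G}{\gamma_m(G)}\wedge \tfrac{G}{\gamma_m(G)} \to 1,
\]
one gets $\e(G\wedge G)\mid \e(\mathrm{im}(\gamma_m(G)\wedge G))\,\e\!\left(\tfrac{G}{\gamma_m(G)}\wedge\tfrac{G}{\gamma_m(G)}\right)$, and the goal is to bound each factor by one and by $n-1$ powers of $\e(G)$ respectively.

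I would make the concrete choice $m=\ceil{\frac{c}{p-1}}+1$. The subgroup $\gamma_m(G)$ is characteristic, hence normal, and using the standard nesting $\gamma_k(\gamma_m(G))\subseteq\gamma_{km}(G)$ one checks it has class at most $p-2$: indeed $(p-1)m\ge c+(p-1)>c$, so $\gamma_{p-1}(\gamma_m(G))\subseteq\gamma_{(p-1)m}(G)=1$. Lemma \ref{L:5.5} then yields $\e(\gamma_m(G)\wedge G)\mid\e(\gamma_m(G))\mid\e(G)$, controlling the first factor. For the second, $G/\gamma_m(G)$ has class $m-1=\ceil{\frac{c}{p-1}}$, and since $n\ge\log_{p-1}c$ gives $c\le(p-1)^n$ we obtain $m-1\le(p-1)^{n-1}$, hence $\ceil{\log_{p-1}(m-1)}\le n-1$; as $c\ge p$ forces $m-1\ge2$, the induction hypothesis applies to $G/\gamma_m(G)$ and bounds the second factor by $\e(G/\gamma_m(G))^{n-1}\mid\e(G)^{n-1}$. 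Multiplying, $\e(G\wedge G)\mid\e(G)\cdot\e(G)^{n-1}=\e(G)^n$.

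I expect the main obstacle to be the arithmetic balancing of $m$: a single value must simultaneously satisfy $(p-1)m>c$ (so $\gamma_m(G)$ has class $\le p-2$ and Lemma \ref{L:5.5} is available) and $m-1\le(p-1)^{n-1}$ (so the quotient falls under the $(n-1)$-case), and it is precisely the defining inequality $c\le(p-1)^n$ that reconciles these two constraints. The commutator inclusion $\gamma_k(\gamma_m(G))\subseteq\gamma_{km}(G)$, though standard, should be stated explicitly, and I would take care to note that $c\ge p$ keeps the quotient of class at least $2$, so the excluded case $c=1$ never reappears along the induction.
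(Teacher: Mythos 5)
Your proposal is correct and follows essentially the same route as the paper's own proof: the same induction on $n$ with base case settled by Theorem \ref{T:4.8}, the same choice $m=\ceil{\frac{c}{p-1}}+1$, the same exact sequence $\gamma_m(G)\wedge G \to G\wedge G \to \frac{G}{\gamma_m(G)}\wedge\frac{G}{\gamma_m(G)}\to 1$, and Lemma \ref{L:5.5} applied to $\gamma_m(G)$, which has class at most $p-2$ because $(p-1)m>c$. The extra care you take (stating the nesting $\gamma_k(\gamma_m(G))\subseteq\gamma_{km}(G)$ explicitly and noting that $c\ge p$ keeps the quotient's class at least $2$ so the excluded case never recurs) only makes explicit what the paper leaves implicit.
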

\begin{proof}
 We proceed by induction on $n$. If $n = 1$, then $c\leq p-1$ and the claim follows by Theorem \ref{T:4.8}. Now assume $n>1$. Set $m= \ceil{\frac{c}{p-1}}+1$ and consider the exact sequence $\gamma_m(G)\wedge G \rightarrow G \wedge G \rightarrow \frac{G}{\gamma_m(G) }\wedge \frac{G}{\gamma_m(G)} \rightarrow 1.$ Hence $\e(G\wedge G)\mid \e(\gamma_m(G) \wedge G) \e( \frac{G}{\gamma_m(G)}\wedge \frac{G}{\gamma_m(G)})$. Since $(p-1)m\ \geq\ (p-1)\frac{c}{p-1} + p - 1 > c$, $\gamma_m(G)$ is of class atmost $p-2$. By Lemma $\ref{L:5.5}$, we obtain $\e(\gamma_m(G) \wedge G) \mid \e(G)$. 
 Since $c\leq (p-1)^n$, we obtain $\frac{c}{p-1}\leq (p-1)^{n-1}$. Hence the nilpotency class of $\frac{G}{\gamma_m(G)}$ is at most $(p-1)^{n-1}$. Using induction hypothesis, we obtain $\e(\frac{G}{\gamma_m(G)}\wedge \frac{G}{\gamma_m(G)}) \mid \e(G)^{n-1}$, and hence the proof.
\end{proof}

\section{Bounds depending on the derived length}

P. Moravec in \cite{P.M.1} proved that the conjecture is true for metabelian groups of exponent $p$. In the theorem below, we prove it for $p$-central metabelian groups.
\begin{theorem}
Let $G$ be a $p$-central metabelian group. Then $\e(G \wedge G) \mid \e(G)$. 
\end{theorem}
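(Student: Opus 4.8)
The plan is to split off the subgroup $G^p$ by means of the exact sequence of Lemma~\ref{L:4.4} and to bound the two resulting factors separately, exploiting $p$-centrality to force $G^p$ to be abelian of small exponent. Write $\e(G)=p^e$ and take $p$ odd, as elsewhere in the paper. Since $G$ is $p$-central we have $G^p\le Z(G)$; in particular $G^p$ is abelian, $[G^p,G]=1$, and, being generated in an abelian group by $p$-th powers each of order dividing $p^{e-1}$, it satisfies $\e(G^p)\mid p^{e-1}$.

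First I would invoke Lemma~\ref{L:4.4} with $N=G$ and $M=G^p$ to obtain
\[
G^p\wedge G\longrightarrow G\wedge G\longrightarrow \tfrac{G}{G^p}\wedge\tfrac{G}{G^p}\longrightarrow 1,
\]
whence $\e(G\wedge G)\mid \e\bigl(\mathrm{im}(G^p\wedge G)\bigr)\,\e\bigl(\tfrac{G}{G^p}\wedge\tfrac{G}{G^p}\bigr)$. For the first factor, $G^p$ is an abelian normal subgroup of $G$ of odd exponent, so Lemma~\ref{L:4.2}$(i)$ with $N=G^p$ and $M=G$ gives $\e(G^p\otimes G)\mid\e(G^p)$; as $G^p\wedge G$ is a quotient of $G^p\otimes G$, we get $\e(G^p\wedge G)\mid\e(G^p)\mid p^{e-1}$, and the same bound holds for its image in $G\wedge G$.

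For the second factor, $G/G^p$ is a metabelian group of exponent $p$, so the result of Moravec quoted as \cite{P.M.1} yields $\e\bigl(\tfrac{G}{G^p}\wedge\tfrac{G}{G^p}\bigr)\mid p$. Multiplying the two bounds gives $\e(G\wedge G)\mid p^{e-1}\cdot p=p^e=\e(G)$, and since $\M\le G\wedge G$ the assertion for the Schur multiplier is immediate.

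The step that needs the most care is the metabelian exponent-$p$ input for the \emph{exterior square} rather than merely the multiplier: \cite{P.M.1} is phrased for $\M$, and one must check that the same argument bounds $\e\bigl(\tfrac{G}{G^p}\wedge\tfrac{G}{G^p}\bigr)$ by $p$. If one only wants $\e(\M)\mid\e(G)$, this obstacle disappears: feeding the three commutator maps into a snake-lemma diagram as in Section~5, the vanishing of $[G^p,G]$ makes the left vertical map $\alpha$ trivial, so $\ker\alpha$ is all of $G^p\wedge G$, and one obtains $\ker\alpha\to\M\to\ker\beta\to 1$ with $\ker\beta\le\m(\tfrac{G}{G^p})$, giving $\e(\M)\mid\e(G^p\wedge G)\,\e\bigl(\m(\tfrac{G}{G^p})\bigr)\mid p^{e-1}\cdot p$ from the multiplier version alone. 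The only other point to verify is the odd-exponent hypothesis in Lemma~\ref{L:4.2}$(i)$, which holds automatically here, the case $p=2$ costing at most the extra factor supplied by Lemma~\ref{L:4.2}$(ii)$.
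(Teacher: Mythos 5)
Your proposal is correct (for odd $p$, the setting you adopt) and it rests on the same decomposition as the paper's proof: split off $G^p$ via Lemma~\ref{L:4.4}, use $p$-centrality to make $G^p$ a central abelian subgroup of exponent dividing $p^{e-1}$, and handle the quotient $\frac{G}{G^p}$ as a metabelian group of exponent $p$ via \cite{P.M.1}. The difference is in the assembly. The paper argues exactly along your fallback route: it forms the commutative diagram of commutator maps, notes that $[G^p,G]=1$ makes $\alpha$ trivial, and applies the snake lemma to get $\ker(\alpha)\rightarrow\M\rightarrow\ker(\beta)\rightarrow 1$ with $\ker(\beta)\leq\m(\frac{G}{G^p})$, concluding $\e(\M)\mid\e(\mathrm{im}(G^p\wedge G))\,\e(\m(\frac{G}{G^p}))\mid p^{e-1}\cdot p$. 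So the paper's own write-up bounds only the multiplier, using the multiplier form of Moravec's metabelian result, even though the theorem is stated for $\e(G\wedge G)$. Your primary route---the plain exact sequence together with Lemma~\ref{L:4.2}$(i)$---is what actually delivers the stated exterior-square claim, and the exterior-square form of \cite{P.M.1} that you flag as the delicate step is precisely what the paper itself invokes elsewhere: in the proof of Theorem~\ref{T:3} it quotes $\e(\frac{G}{G^3}\wedge\frac{G}{G^3})\mid 3$ for an exponent-$3$ (hence metabelian) quotient, so that input is legitimate within the paper's framework. One further simplification is available to you: since $G^p$ is central and not merely abelian, every correction term $n\otimes[n,m]$ appearing in Lemma~\ref{L:4.1} vanishes identically, so $\e(G^p\otimes G)\mid\e(G^p)$ holds with no parity hypothesis; the odd/even dichotomy of Lemma~\ref{L:4.2}, and the extra factor of $2$ you concede at $p=2$, are not actually needed for that factor.
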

\begin{proof}
Let $\e(G) = p^n$, for some integer $n$.
Since $G$ is $p$-central, we have the following commutative diagram where $\alpha$ and $\beta$ are the natural commutator maps from the respective domains : 
\begin{equation*}
\xymatrix@+20pt{
&G^p\wedge G\ar@{->}[r]
\ar@{->}^{\alpha}[d]
 &G\wedge G\ar@{->}[r]
\ar@{->}[d]
&\frac{G}{G^p}\wedge \frac{G}{G^p} \ar@{->}[r]
\ar@{->}^{\beta}[d]
&1 \\
&1\ar@{->}[r]
 &\gamma_2(G)\ar@{->}[r]
&\gamma_2(G)\ar@{->}[r]
&1.
}\end{equation*}

Now Snake Lemma gives the exact sequence,
\begin{align*}
 \ker(\alpha) \rightarrow \M \rightarrow \ker(\beta) \rightarrow 1.
\end{align*}
Since $\ker(\beta) \leq \m(\frac{G}{G^p})$,  we have $\e(\m(G)) \mid \e(im(G^p\wedge G))\e(\m(\frac{G}{G^p}))$. $G$ being $p$-central, $\e(im(G^{p}\wedge G)) \mid p^{n-1}$. Further, $\frac{G}{G^p} $ being a metabelian group of exponent $p$ gives $\e(\m(\frac{G}{G^p})) \mid p$. Hence the claim for a $p$-central metabelian group. 

\end{proof}

The following Lemma can be found in \cite{G.E}.
\begin{lemma}\label{L:6.1}
Let $N$ be a normal subgroup of a group $G$, and $N\subset \gamma_2(G)$. Then the sequence 
$$N\otimes G\rightarrow G\otimes G \rightarrow \frac{G}{N}\otimes \frac{G}{N} \rightarrow 1$$ is exact.\\
\end{lemma}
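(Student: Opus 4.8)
The plan is to establish the two easy exactness assertions first and then concentrate on exactness in the middle, which is where the hypothesis $N\subset\gamma_2(G)$ does its work. Write $\rho\colon G\otimes G\to (G/N)\otimes(G/N)$ for the map induced by $G\to G/N$, so $\rho(g\otimes h)=\bar g\otimes\bar h$. Surjectivity of $\rho$ is immediate, since every generator $\bar g\otimes\bar h$ is the image of $g\otimes h$; this gives exactness at $(G/N)\otimes(G/N)$. Likewise the composite $N\otimes G\to G\otimes G\to(G/N)\otimes(G/N)$ is trivial, because $n\in N$ maps to $1$ in $G/N$; hence $\mathrm{im}(N\otimes G)\subseteq\ker\rho$. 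Let $K$ denote the image of $N\otimes G$ in $G\otimes G$. Using \eqref{eq:1.2.2}, conjugating a generator $n\otimes g$ of $K$ by any $g_1\otimes h_1$ yields ${}^{[g_1,h_1]}n\otimes{}^{[g_1,h_1]}g$, which again lies in $K$ because $N$ is normal; thus $K$ is a normal subgroup of $G\otimes G$.

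For the reverse inclusion $\ker\rho\subseteq K$ I would construct a homomorphism $\theta\colon(G/N)\otimes(G/N)\to(G\otimes G)/K$ by $\theta(\bar g\otimes\bar h)=(g\otimes h)K$ and show it is a two-sided inverse of the map induced by $\rho$. Granting that $\theta$ is well defined, the defining relations \eqref{eq:1.1.1} and \eqref{eq:1.1.2} for $(G/N)\otimes(G/N)$ are matched by those of $G\otimes G$ modulo $K$, so $\theta$ is a homomorphism, and $\theta$ together with the map induced by $\rho$ are mutually inverse, forcing $\ker\rho=K$. The point that requires the hypothesis is independence of $\theta$ from the choice of coset representatives. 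Replacing $g$ by $gn$ is harmless: by \eqref{eq:1.1.1}, $gn\otimes h=({}^gn\otimes{}^gh)(g\otimes h)$ and ${}^gn\in N$, so the extra factor lies in $K$. Replacing $h$ by $hn$ gives, by \eqref{eq:1.1.2}, $g\otimes hn=(g\otimes h)({}^hg\otimes{}^hn)$, so well-definedness comes down to the single assertion
\begin{equation*}
\mathrm{im}(G\otimes N)\subseteq K,\qquad\text{that is,}\qquad g\otimes m\in K\ \text{ for all } g\in G,\ m\in N.
\end{equation*}

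This last containment is the main obstacle, and it is precisely where $N\subset\gamma_2(G)$ enters. It is instructive to contrast the exterior square: there $g\wedge m=(m\wedge g)^{-1}$ lies in the image of $N\wedge G$ for free, which is why the analogous exterior sequence holds for every normal subgroup. In the tensor square the discrepancy between $g\otimes m$ and $(m\otimes g)^{-1}$ is measured by $\nabla(G)$, and it is this discrepancy that must be absorbed into $K$. My plan for the step is to use $N\subset\gamma_2(G)$ to write each $m\in N$ as a product of commutators and to apply the crossed-module identity \eqref{eq:1.2.4}, namely $g\otimes[a,b]={}^g(a\otimes b)(a\otimes b)^{-1}$ from Proposition \ref{P:2}, together with normality of $N$ and the fact that $m\otimes m$ and the elements $n\otimes g$ already lie in $K$, so as to rewrite $g\otimes m$ as a product of elements of $K$. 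Once $\mathrm{im}(G\otimes N)\subseteq K$ is secured, $\theta$ is well defined and the proof concludes as above. I expect the delicate part to be exactly this reduction — controlling the $\nabla(G)$-contributions uniformly over all representatives.
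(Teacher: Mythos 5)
Your skeleton is the right one, and every step of it except the last is sound: surjectivity, triviality of the composite, normality of $K=\mathrm{im}(N\otimes G)$ via \eqref{eq:1.2.2}, and the reduction of middle exactness, through well-definedness of $\theta$, to the single containment $g\otimes m\in K$ for all $g\in G$, $m\in N$. (For what it is worth, the paper gives no proof at all of this lemma --- it only cites \cite{G.E} --- so the comparison can only be with what a complete argument requires.) The genuine gap is that this containment, which is the \emph{only} place the hypothesis $N\subset\gamma_2(G)$ enters, is never proved: you announce a plan, and the plan as described would fail. Writing $m=\prod_i[a_i,b_i]$ and expanding $g\otimes m$ by \eqref{eq:1.1.2} yields a product of $G$-conjugates of elements $g\otimes[a_i,b_i]$, and \eqref{eq:1.2.4} rewrites each such factor as ${}^{g}(a_i\otimes b_i)(a_i\otimes b_i)^{-1}$; but the individual commutators $[a_i,b_i]$ need not belong to $N$ (only their product does), so nothing places these factors in $K$, and no commutator-by-commutator argument can recover the containment. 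The $\nabla(G)$-bookkeeping you anticipate is likewise circular: the central correction $(m\otimes g)(g\otimes m)\in\nabla(G)$ is a product of elements such as $mg\otimes mg$, and expanding those by \eqref{eq:1.1.1} and \eqref{eq:1.1.2} reintroduces factors of the form $g'\otimes m$ with $g'\in G$ --- exactly the elements you are trying to control.

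What closes the gap is a fact you did not invoke, although it is in the paper's toolkit: Proposition \ref{P:2}(iii) is stated for \emph{all} $t\in G\otimes H$, not merely for generators, and for the tensor square $\lambda=\lambda'=\kappa$. Hence for every $t\in G\otimes G$ and $g\in G$ one has $g\otimes\kappa(t)={}^{g}t\,t^{-1}$ and $\kappa(t)\otimes g=t\,{}^{g}t^{-1}$; these two elements are visibly inverse to each other, so $g\otimes\kappa(t)=(\kappa(t)\otimes g)^{-1}$. Since $\kappa(G\otimes G)=\gamma_2(G)\supseteq N$, every $m\in N$ equals $\kappa(t)$ for some $t$, and therefore $g\otimes m=(m\otimes g)^{-1}\in K$ \emph{exactly}, with no $\nabla(G)$-correction whatsoever. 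In particular your guiding picture is off at just this point: the symmetry $g\wedge m=(m\wedge g)^{-1}$, which you noted holds ``for free'' in the exterior square, already holds in $G\otimes G$ whenever one entry lies in $\gamma_2(G)$; that is precisely what the hypothesis $N\subset\gamma_2(G)$ buys and why it appears in the statement. With this identity substituted for your sketched step, the rest of your argument goes through verbatim.
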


In \cite{P.M.1}, P. Moravec showed that if $d$ is the derived length of $G$, then $\e(\M)\mid ( \e(G))^{2(d-1)}$. The author of \cite{S1} improved this bound by proving that $\e(\M)\mid (\e(G))^d$, when $\e(G)$ is odd, and $\e(\M)\mid 2^{d-1}(\e(G))^d$, when $\e(G)$ is even. Using our techniques, we obtain the following generalization of Theorem A of \cite{S1}.

\begin{theorem}
Let $G$ be a solvable group of derived length $d$. 
\begin{itemize}
\item[$(i)$] If $\e(G)$ is odd, then $\e(G\otimes G)\mid (\e(G))^d$. In particular, $\e(\M)\mid (\e(G))^d$.
\item[$(ii)$] If $\e(G)$ is even, then $\e(G\otimes G)\mid 2^{d-1}(\e(G))^d$. In particular, $\e(\M)\mid 2^{d-1}(\e(G))^d$.
\end{itemize}
\end{theorem}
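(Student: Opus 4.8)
The plan is to induct on the derived length $d$, peeling off the bottom of the derived series. Write $G^{(d-1)}$ for the last nontrivial term of the derived series of $G$; it is characteristic, hence normal in $G$, it is abelian (since $[G^{(d-1)},G^{(d-1)}]=G^{(d)}=1$), and for $d\ge 2$ it sits inside $\gamma_2(G)$. Setting $N=G^{(d-1)}$, Lemma \ref{L:6.1} supplies the exact sequence
\[
N\otimes G\rightarrow G\otimes G \rightarrow \tfrac{G}{N}\otimes \tfrac{G}{N} \rightarrow 1,
\]
so that $\e(G\otimes G)$ divides $\e(\mathrm{im}(N\otimes G))\,\e(\tfrac{G}{N}\otimes\tfrac{G}{N})$, and the two factors can be controlled separately.

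For the first factor, $N$ is abelian with $\e(N)\mid \e(G)$, so Lemma \ref{L:4.2} gives $\e(N\otimes G)\mid \e(N)\mid \e(G)$ when $\e(G)$ is odd, and $\e(N\otimes G)\mid 2\e(N)\mid 2\e(G)$ when $\e(G)$ is even; the exponent of the image divides this in either case. For the second factor, $G/N$ is solvable of derived length at most $d-1$ with $\e(G/N)\mid \e(G)$, so the induction hypothesis bounds $\e(\tfrac{G}{N}\otimes\tfrac{G}{N})$ by $(\e(G))^{d-1}$ in the odd case and by $2^{d-2}(\e(G))^{d-1}$ in the even case. Multiplying the two estimates yields $\e(G\otimes G)\mid \e(G)\cdot(\e(G))^{d-1}=(\e(G))^d$ in the odd case and $\e(G\otimes G)\mid 2\e(G)\cdot 2^{d-2}(\e(G))^{d-1}=2^{d-1}(\e(G))^d$ in the even case, as required.

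The base case $d=1$ is where the even bound is sharp and must be handled directly: here $G$ is abelian, the conjugation actions are trivial, and the defining relations \eqref{eq:1.1.1}--\eqref{eq:1.1.2} reduce to bilinearity, so $G\otimes G$ is the ordinary abelian tensor square, whose exponent equals $\e(G)$ exactly. This is the crucial point, and the main obstacle of the argument: applying Lemma \ref{L:4.2}$(ii)$ at the bottom would only yield $2\e(G)$ and would spoil the exponent $2^{d-1}$ in the even statement. Thus the factor of $2$ must be incurred exactly once per inductive step and never at the base, which is precisely what the abelian computation guarantees, so that across the $d-1$ steps the factors of $2$ accumulate to $2^{d-1}$ and no further.

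Finally, the assertions about $\M$ follow formally. Since $G\wedge G$ is a quotient of $G\otimes G$ and $\M$ embeds in $G\wedge G$, we have $\e(\M)\mid \e(G\wedge G)\mid \e(G\otimes G)$, so both displayed bounds on $\e(G\otimes G)$ transfer verbatim to $\e(\M)$. Beyond the sharp abelian base case, the proof is purely a bookkeeping of the exact sequence together with Lemmas \ref{L:6.1} and \ref{L:4.2}.
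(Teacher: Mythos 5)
Your proposal is correct and follows essentially the same route as the paper: induction on $d$, the exact sequence of Lemma \ref{L:6.1} applied to $N=G^{(d-1)}$, Lemma \ref{L:4.2} to bound the abelian term $N\otimes G$, and the direct abelian computation at the base case $d=1$ so that no factor of $2$ is incurred there. Your explicit remarks — that $G^{(d-1)}\subseteq\gamma_2(G)$ justifies invoking Lemma \ref{L:6.1}, that the base case must not use Lemma \ref{L:4.2}$(ii)$ lest the bound degrade to $2^{d}$, and that $\e(\M)\mid\e(G\wedge G)\mid\e(G\otimes G)$ gives the statements about $\M$ — only spell out details the paper leaves implicit.
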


\begin{proof}
The proof proceeds by induction on $d$. When $d =1$, $G$ is abelian. So $\e{G\otimes G}\mid \e{G}$. Let $d >1$, then using Lemma \ref{L:6.1} consider the exact sequence below:
$$G^{(d-1)}\otimes G\rightarrow G\otimes G \rightarrow \frac{G}{G^{(d-1)}}\otimes \frac{G}{G^{(d-1)}} \rightarrow 1.$$ We have that $\e(G\otimes G)\mid \e(im(G^{(d-1)}\otimes G))\e(\frac{G}{G^{(d-1}}\otimes \frac{G}{G^{(d-1)}})$.

\begin{itemize}
\item[$(i)$] Let $\e(G)$ be odd. Since $G^{(d-1)}$ is abelian, Lemma \ref{L:4.2} implies that $\e(G^{(d-1)}\otimes G)\mid \e(G^{(d-1)})\mid \e(G)$. Hence by induction hypothesis, $\e(\frac{G}{G^{(d-1)}}\otimes \frac{G}{G^{(d-1)}})\mid (\e(\frac{G}{G^{(d-1)}}))^{d-1}\mid (\e(G))^{d-1}$. Therefore $\e(G\otimes G)\mid (\e(G))^d$.

\item[$(ii)$] Let $\e(G)$ be even. Since $G^{(d-1)}$ is abelian, Lemma \ref{L:4.2} yields $\e(G^{(d-1)}\otimes G)\mid 2\e(G^{(d-1)})\mid 2\e{G}$. Thus by induction hypothesis, $\e(\frac{G}{G^{(d-1)}}\otimes \frac{G}{G^{(d-1)}})\mid 2^{d-2}(\e(\frac{G}{G^{(d-1)}}))^{d-1}\mid 2^{d-2}(\e(G))^{d-1}$. Therefore, $\e(G\otimes G)\mid 2^{d-1}(\e(G))^d$.
\end{itemize}
\end{proof}

In the next lemma, we further generalize the above theorem by considering $N\otimes G$ instead of $G\otimes G$.

\begin{lemma}
Let $N\unlhd G$. Suppose $N$ is solvable of derived length $d$. \begin{itemize}
\item[(i)] If $\e(N)$ is odd, then $\e(N\otimes G)\mid (\e(N))^d$. In particular, $\e(\m(G,N))\mid (\e(N))^d.$
\item[(ii)] If $\e(N)$ is even, then $\e(N\otimes G)\mid 2^d(\e(N))^d$. In particular, $\e(\m(G,N))\mid 2^d(\e(N))^d.$
\end{itemize}
\end{lemma}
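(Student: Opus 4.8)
The plan is to imitate the proof of the preceding theorem, running an induction on the derived length $d$ of $N$ while carrying the whole group $G$ along in the second slot of the tensor. For the base case $d=1$, the subgroup $N$ is abelian and normal in $G$, so Lemma~\ref{L:4.2} (applied to the normal subgroups $N$ and $G$) gives $\e(N\otimes G)\mid\e(N)$ when $\e(N)$ is odd and $\e(N\otimes G)\mid 2\e(N)$ when $\e(N)$ is even, which are exactly the asserted bounds $(\e(N))^1$ and $2^1(\e(N))^1$.

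For the inductive step I would set $A=N^{(d-1)}$, the last nontrivial term of the derived series of $N$. It is characteristic in $N$, hence normal in $G$, it is abelian, and it satisfies $A\subseteq\gamma_2(N)$. The engine of the induction is a relative analogue of Lemma~\ref{L:6.1}, namely the exact sequence
\begin{align*}
A\otimes G\rightarrow N\otimes G\rightarrow \frac{N}{A}\otimes\frac{G}{A}\rightarrow 1.
\end{align*}
Granting this, one gets $\e(N\otimes G)\mid \e(\mathrm{im}(A\otimes G))\,\e(\frac{N}{A}\otimes\frac{G}{A})$.

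The two factors are then bounded separately. Since $A$ is abelian and normal, Lemma~\ref{L:4.2} gives $\e(A\otimes G)\mid\e(A)\mid\e(N)$ in the odd case and $\e(A\otimes G)\mid 2\e(A)\mid 2\e(N)$ in the even case. The quotient $N/A$ is normal in $G/A$, solvable of derived length $d-1$, and $\e(N/A)\mid\e(N)$, so the induction hypothesis yields $\e(\frac{N}{A}\otimes\frac{G}{A})\mid(\e(N))^{d-1}$ (odd) and $\mid 2^{d-1}(\e(N))^{d-1}$ (even). Multiplying the two factors gives $\e(N\otimes G)\mid(\e(N))^{d}$ and $\e(N\otimes G)\mid 2^{d}(\e(N))^{d}$ respectively; the extra factor of $2$ accumulates to $2^{d}$ rather than $2^{d-1}$ precisely because the relative base case costs a factor of $2$. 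The two ``in particular'' claims are then immediate, since the relative Schur multiplier $\m(G,N)$ is a subgroup of the homomorphic image $N\wedge G$ of $N\otimes G$, whence $\e(\m(G,N))\mid\e(N\otimes G)$.

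The one step that is not formal, and which I expect to be the main obstacle, is the displayed exact sequence. Surjectivity of $N\otimes G\to\frac{N}{A}\otimes\frac{G}{A}$ and the inclusion $\mathrm{im}(A\otimes G)\subseteq\ker$ are clear; the content is that the kernel is no larger, i.e.\ that every generator $n\otimes a$ with $n\in N$ and $a\in A$ already lies in $\mathrm{im}(A\otimes G)$. This is exactly where $A\subseteq\gamma_2(N)$ enters: I would write $a$ as a product of commutators $[u,v]$ with $u,v\in N$, expand $n\otimes a$ by \eqref{eq:1.1.2}, and rewrite each resulting factor through the crossed-module relations of Proposition~\ref{P:2}(vi)--(vii), so that the correction terms fall into the normal subgroup $\mathrm{im}(A\otimes G)$. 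Alternatively one can quote the functorial right-exactness of the nonabelian tensor product from \cite{G.E.1} and \cite{BL1}. Once the sequence is in hand, the remainder is the same bookkeeping as in the previous theorem.
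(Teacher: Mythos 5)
Your proposal is correct and follows essentially the same route as the paper: the same induction on the derived length $d$, the same exact sequence $N^{(d-1)}\otimes G\rightarrow N\otimes G\rightarrow \frac{N}{N^{(d-1)}}\otimes\frac{G}{N^{(d-1)}}\rightarrow 1$, Lemma~\ref{L:4.2} applied to the abelian subgroup $N^{(d-1)}$, and the induction hypothesis applied to $N/N^{(d-1)}\unlhd G/N^{(d-1)}$, with the factor $2^d$ (rather than $2^{d-1}$) arising exactly as you explain from the relative base case. The only difference is that the paper asserts the exact sequence with no justification at all, so your identification of its kernel claim as the genuine content (requiring $N^{(d-1)}\subseteq\gamma_2(N)$) is added care beyond what the paper itself supplies.
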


\begin{proof}
We prove the Lemma by induction on $d$. Let $d =1$, then the theorem follows from Lemma \ref{L:4.2}. Let $d >1$. The exact sequence $$N^{(d -1)}\otimes G\rightarrow N\otimes G\rightarrow \frac{N}{N^{(d-1)}}\otimes \frac{G}{N^{(d-1)}}\rightarrow 1$$ gives that $\e(N\otimes G)\mid \e(im(N^{(d-1)}\otimes G))\e(\frac{N}{N^{(d-1)}}\otimes \frac{G}{N^{(d-1)}})$.
\begin{itemize}
\item[$(i)$] If $\e(N)$ is odd, then by induction hypothesis $\e(\frac{N}{N^{(d-1)}}\otimes \frac{G}{N^{(d-1)}})\mid \e(\frac{N}{N^{(d-1)}})^{d-1}\mid (\e(N))^{d-1}$. Since $N^{(d-1)}$ is abelian, $\e(N^{(d-1)}\otimes G)\mid \e(N^{(d-1)})\mid \e(N)$, and hence $\e(im(N^{(d-1)}\otimes G))\mid \e(N)$. Therefore, $\e(N\otimes G)\mid (\e(N))^d$.
\item[$(ii)$] If $\e(N)$ is even, then induction hypothesis implies that $\e(\frac{N}{N^{(d-1)}}\otimes \frac{G}{N^{(d-1)}})\mid 2^{d-1}(\e(\frac{N}{N^{(d-1)}}))^{d-1}\mid 2^{d-1}(\e(N))^{d-1}$. Since $N^{(d-1)}$ is abelian, $\e(N^{(d-1)}\otimes G)\mid 2\e(N^{(d-1)})\mid 2\e(N)$. Thus $\e(im(N^{(d-1)}\otimes G))\mid \e(N)$. Therefore $\e(N\otimes G)\mid 2^d(\e(N))^d$.
\end{itemize}
\end{proof}

For a $p$-group of nilpotency class $c$, we compare the bound we have obtained for $\e(\M)$ with some previous bounds. In the following table, we have listed the values of $m$, where $\e(\M) \mid \e(G)^m$, for the bounds given by Ellis, Moravec and Theorem \ref{T:5.3} in this paper.
\begin{center}
\textbf{Table I}
\end{center}
\begin{center}
\begin{tabular}{ | m{7em} | m{7em}| m{7em} | m{7em} | m{7em}|}
\hline
    & Ellis \cite{G.E} &  Moravec \cite{P.M.1}& \\
 \hline
 $c$ & $\ceil{\frac{c}{2}}$ & $2\floor{log_2c}$ & $\ceil{log_2(\frac{c+1}{3})}$\\
 \hline
 3  & 2  & 2  & 1\\
\hline
 4  & 2  & 4  & 1\\
 \hline
 5  & 3  & 4  & 1\\
 \hline
 6  & 3  & 4  & 2\\
  \hline
 11  & 6  & 6  & 2\\
  \hline
 20  & 10  & 8  & 3\\
  \hline
 100  & 50  & 12  & 6\\
  \hline
\end{tabular}
\end{center}

Moravec improves the bound given by Ellis for $c > 11$. It can be seen that the bound obtained in Theorem \ref{T:5.3} improves the other bounds.
In the following table, we consider $p$-groups of nilpotency class $c$ and exponent $p^n$. The bounds $p^m$, where $\e(\M) \mid p^m$, obtained by Moravec, Sambonet and Theorem \ref{T:5.6} are listed.
\begin{center}
\textbf{Table II}
\end{center}
\begin{center}
\begin{tabular}{ | m{1cm} | m{1cm}| m{1cm} | m{7em} | m{8em}| m{5em} |} 
\hline
&  &  & Moravec \cite{P.M.1} & Sambonet \cite{S2}& \\ 
\hline
$c$ & $p$ & $n$ & $p^{k\floor{log_2c}}$ & $p^{n(\floor{log_{p-1}c}+1)}$ &  $p^{n\ceil{log_{p-1}c}}$\\
 \hline
 5 & 3 & 1 & $3^2$ & $3^3$ & $3^3$ \\
 \hline 
 5 & 3 & 2 & $3^8$ & $3^6$ & $3^6$ \\
 \hline
 6 & 7 & 1 & $7^2$ & $7^2$ & $7$ \\
 \hline
 12 & 13 & 2 & $13^{12}$ & $13^4$ & $13^2$ \\
 \hline
 16 & 5 & 1 & $5^{4}$ & $5^{3}$ & $5^{2}$\\
 \hline
 144 & 13 & 1 & $13^{14}$ & $13^3$ & $13^2$ \\
 \hline
\end{tabular}
\end{center}

where $k$ is defined in \cite{P.M.1}. \\
Note that, in Table II even though Moravec has the best bound in the first row, from Table I it is clear that Theorem \ref{T:5.3} gives a better bound for the same case.
\section*{Acknowledgements} In a private communication with the third author, P. Moravec had mentioned that for groups with nilpotency class 5, he could only prove that $\e(\M)\mid (\e(G))^3$. He further mentioned that computer evidence showed that the bound should be 2 instead of 3. Later he himself proved the bound to be 2 in \cite{P.M.5}. We thank P. Moravec for sharing this insight with us.

\section*{References}
\bibliographystyle{amsplain}
\bibliography{Bibliography}
\end{document}